\numberwithin{equation}{section}
\newcommand{\N}{\mathbb{N}}
\newcommand{\R}{\mathbb{R}}
\newcommand{\mm}{{\mbox{\boldmath$m$}}}
\newcommand{\ppi}{{\mbox{\boldmath$\pi$}}}
\newcommand{\sfd}{{\sf d}}
\newcommand{\Kliminf}{K\kern-3pt-\kern-2pt\mathop{\rm lim\,inf}\limits}  % Kuratowski liminf di insiemi
\newcommand{\supp}{\mathop{\rm supp}\nolimits}   % supporto 
\newcommand{\Lip}{\mathop{\rm Lip}\nolimits}          %Lipschitz constant
\renewcommand{\d}{{\mathrm d}}
\newcommand{\restr}[1]{\lower3pt\hbox{$|_{#1}$}}
\newcommand{\eps}{\varepsilon}  
\newcommand{\nchi}{{\raise.3ex\hbox{$\chi$}}}
\newcommand{\lims}{\varlimsup}
\newcommand{\fr}{\penalty-20\null\hfill$\blacksquare$}                      %quadratino nero alla fine del remark, se non vi piace, la cosa migliore e' `svuotare' la macro, cosi' non bisogna intervenire sul testo
\newcommand{\prob}[1]{\mathscr P(#1)}                   % misure di probabilita
\newcommand{\e}{{\rm{e}}}                           % mappa di valutazione, bisogna mettere `a mano' il tempo t
\renewcommand{\mm}{\mathfrak m}                                %misura di riferimento
\newcommand{\weakgrad}[1]{|\nabla #1|_w} % gradient ottenuto integrando lungo "quasi tutte" le geodetiche
\newcommand{\bd}{{\mathbf\Delta}}
\newcommand{\s}{{\rm S}}
\newcommand{\test}[1]{{\rm Test}(#1)}
\renewcommand{\div}{\mathbf{div}}
\newtheorem{theorem}{Theorem}[section]
\newtheorem{corollary}[theorem]{Corollary}
\newtheorem{lemma}[theorem]{Lemma}
\newtheorem{proposition}[theorem]{Proposition}
\newtheorem{definition}[theorem]{Definition}
\newtheorem{remark}[theorem]{Remark}
\newcommand{\sign}{{\rm sign}}
\renewcommand{\supp}{{\rm supp}}
\newcommand{\Test}{{\rm Test}}
\renewcommand{\div}{{\rm div}}
\newcommand{\bdiv}{{\rm{ \mathbf {div}}}}
\newcommand{\bDelta}{{ \mathbf {\Delta}}}
\newcommand{\Prob}{\mathscr P}
\renewcommand{\b}{{\rm b}}
\renewcommand{\weakgrad}[1]{|D#1|_w}
\title{A PDE approach to nonlinear potential theory in metric measure spaces}
\author{Nicola Gigli, Andrea Mondino}
\begin{document}
\maketitle

\begin{abstract}
We show that the tools recently introduced by the first author in \cite{Gigli12} allow to give a PDE description of $p$-harmonic functions in metric measure setting. Three applications are given: the first is about  new results on the sheaf property of harmonic functions, the second is a PDE proof of the fact that the composition of a subminimizer with a convex and non-decreasing function is again a subminimizer, and  the third is the fact that the Busemann function associated to a line is harmonic on infinitesimally Hilbertian $CD(0,N)$ spaces.
\end{abstract}

\tableofcontents

\section{Introduction}

The terminology `nonlinear potential theory in metric measure spaces' refers to the study  of real valued $p$-harmonic functions defined in the abstract setting of metric measure spaces and related topics. We refer to \cite{BjornBjorn11} for an overview of the subject and detailed bibliography. 

In the classical Euclidean setting there are two equivalent ways to formulate the statement `the function $g\in W^{1,p}(\Omega)$ is $p$-harmonic', being $\Omega\subset\R^d$ open and $p\in(1,\infty)$. One consists in requiring that $\nabla\cdot(|\nabla g|^{p-2}\nabla g)=0$ in the sense of distributions, the other in requiring that $g$ minimizes the $p$-energy, i.e.
\begin{equation}
\label{eq:class}
\int_\Omega|\nabla g|^p\,\d\mathcal L^d\leq \int_\Omega|\nabla(g+f)|^p\,\d\mathcal L^d,\qquad\forall f\in W^{1,p}_0(\Omega).
\end{equation}
The vector space structure of $\R^d$ plays no role for what concerns such equivalence: the same hold if $\Omega$ is an open subset of a smooth Riemannian manifold, because the only thing needed to pass from a formulation to the other  is a smooth structure and integration by parts.

\bigskip

To approach a definition of $p$-harmonic function on  a non-smooth structure requires some work, in particular the first thing to do is to give the definition of Sobolev space of real valued functions defined on a metric measure space $(X,\sfd,\mm)$. Several equivalent definitions have been proposed (by Cheeger \cite{Cheeger00}, Shanmugalingam \cite{Shanmugalingam00} and the first author together with Ambrosio and Savar\'e \cite{Ambrosio-Gigli-Savare11}, \cite{Ambrosio-Gigli-Savare-pq}, the latter recalled in Section \ref{se:sobclass}), all of them having in common the fact that for a function $g\in W^{1,p}(X,\sfd,\mm)$ it is not defined its distributional gradient, but only its modulus $\weakgrad g$ typically called minimal generalized upper gradient or minimal weak upper gradient (although being this object defined in duality with the distance, it is naturally the norm of a cotangent vector rather than of a tangent one, thus it would be more proper to call it minimal generalized/weak upper `differential' whence the notation with `$D$' in place of `$\nabla$'). The definition of  $W^{1,p}(X,\sfd,\mm)$ can be naturally localized to obtain the space of Sobolev functions $W^{1,p}(\Omega)$ and $W^{1,p}_0(\Omega)$ defined on an open set $\Omega\subset X$.

With such space at disposal the variational formulation of $p$-harmonic functions, in this setting called $p$-minimizers, can be given: one says that $g\in W^{1,p}(\Omega)$ is a $p$-minimizer provided
\begin{equation}
\label{eq:intro}
\int_\Omega\weakgrad g^{p}\,\d\mm\leq \int_\Omega\weakgrad{(g+f)}^p\,\d\mm,\qquad\forall f\in W^{1,p}_0(\Omega),
\end{equation}
and nonlinear potential theory on metric measure spaces has been built on these ground. Quite surprisingly, assuming only completeness, a doubling condition on the measure and the validity of a local weak Poincar\'e inequality and despite the lack of a PDE characterization of $p$-minimizers, the theory has been pushed quite far. For instance the Harnack inequality, the strong maximum principle and several regularity results have been obtained, see \cite{BjornBjorn11} for a detailed overview of the subject.

\bigskip

Aim of this paper is to show that also a genuine PDE characterization of $p$-minimizers  can be given in this abstract framework.  The approach  we propose is independent from analysis in charts. In particular, the sort of PDE that we are going to define is not linked to the differential structure of metric measure spaces built by Cheeger in \cite{Cheeger00}. This structure can be used to define a (chart-dependent) notion of differential of a Sobolev function as an a.e. well defined vector in some $\R^N$. Then  one can define the scalar product $Df\cdot Dg$ of two Sobolev functions as the scalar product of these vectors as elements of $\R^N$. Studies about the regularity of the resulting notion of harmonic maps  have been done in \cite{KRS03} (see also the more recent paper \cite{J11} about solutions of the corresponding Poisson equation). One clear advantage of this approach is that one always obtains a bilinear map $(f,g)\mapsto Df\cdot Dg$ which yields - by integration by parts - a linear Laplacian. The drawback is that, being chart-dependent, $Df\cdot Dg$ is not intrinsically defined, and thus its link with the geometry of the space is not so evident. In particular, in general it is not true that $Df\cdot Df=\weakgrad f^2$ (only a two sided bound holds) and thus minimizers of $\int_\Omega\weakgrad f^2\,\d\mm$ are in general not the same as   the minimizers of $\int_\Omega Df\cdot Df \,\d\mm$.

\bigskip

Aiming for a  PDE description of minimizers in \eqref{eq:intro}, we will then   proceed differently. The key tool that we will use  is the definition of `differential of a function $f$ applied to the gradient of a function $g$' for Sobolev $f,g$ which has been proposed by the first author in \cite{Gigli12} and that we now describe. To fix the ideas, let us work for the moment on the space $(\R^d,\|\cdot-\cdot\|,\mathcal L^d)$, where $\|\cdot\|$ is a strictly convex norm. The differential $Df(x)$ of the smooth function $f:\R^d\to\R$ at a point $x$ is the cotangent vector defined by 
\[
Df(x)(v):=\lim_{t\to 0}\frac{f(x+tv)-f(x)}{t},
\]
for all tangent vectors $v\in T_x\R^d\sim\R^d$. By definition, the differential linearly depends on the function and its norm is computed w.r.t. the dual norm $\|\cdot\|_*$ of $\|\cdot\|$. The gradient $\nabla f(x)$ of $f$ as $x$ is the tangent vector $w$ defined by $\|Df(x)\|_*=\|w\|$ and $Df(x)(w)=\|Df(x)\|_*^2$. Given that for any $v\in T_x\R^d\sim\R^d$ it holds 
\begin{equation}
\label{eq:grad0}
Df(x)(v)\leq \|Df(x)\|_*\|v\|\leq\frac12\|Df(x)\|_*^2+\frac12\|v\|^2,
\end{equation}
it is easy to see that $w$ is the gradient $\nabla f(x)$ of $f$ at $x$ if and only if
\begin{equation}
\label{eq:grad}
Df(x)(w)\geq \frac12\|Df(x)\|_*^2+\frac12\|w\|^2.
\end{equation}
A simple compactness argument shows that $\nabla f(x)$ exists and the strict convexity of $\|\cdot\|$ ensures uniqueness. Observe however that in general the gradient does not depend linearly from the function (this is the case if and only if the norm comes from a scalar product). Being the gradient a tangent vector, its norm is computed w.r.t. $\|\cdot\|$. We remark that it holds $\|Df(x)\|_*=\|\nabla f(x)\|$ so that we can't really distinguish differentials and   gradients by only looking at their norms: the crucial algebraic difference is instead the fact that the former is linear on the function, while the latter is not. 

Now we claim that
\begin{equation}
\label{eq:scambio}
Df(\nabla g)(x)=\lim_{\eps\to 0}\frac{\|D(g+\eps f)(x)\|_*^2-\|Dg(x)\|_*^2}{2\eps}.
\end{equation}
Indeed, by the very definitions we have
\[
D(g+\eps f)(\nabla g)(x)\leq \frac12\|D(g+\eps f)(x)\|_*^2+\frac12\|\nabla g(x)\|^2,\qquad\forall \eps\in\R,
\]
and
\[
Dg(\nabla g)(x)\geq \frac12\|Dg(x)\|_*^2+\frac12\|\nabla g(x)\|^2.
\]
Subtract the second inequality from the first, divide by $\eps>0$ (resp. $\eps<0$) and let $\eps\downarrow 0$ (resp. $\eps\uparrow 0$) to obtain
\[
\begin{split}
Df(\nabla g)(x)&\leq \lim_{\eps\downarrow 0}\frac{\|D(g+\eps f)(x)\|_*^2-\|Dg(x)\|_*^2}{2\eps},\\
Df(\nabla g)(x)&\geq \lim_{\eps\uparrow 0}\frac{\|D(g+\eps f)(x)\|_*^2-\|Dg(x)\|_*^2}{2\eps}.
\end{split}
\]
Then notice that the strict convexity of $\|\cdot\|$ is equivalent to the differentiability of $\|\cdot\|_*$ to conclude. The interesting fact about the identity \eqref{eq:scambio} is that it defines the value of $Df(\nabla g)(x)$ starting only from the notion of norm of differential.

If the norm is not strictly convex the situation complicates a bit, because the gradient of a smooth function is not anymore uniquely defined. This is best understood with an example. Endow $\R^2$ with the $L^\infty$ norm  and consider the function $f:\R^2\to\R$ given by $f(x_1,x_2):=x_1$. In this case, all the vectors $v$ of the kind $v=(1,v_2)$ with $v_2\in[-1,1]$ can be called gradient of $f$ at, say, $(0,0)$. Indeed all of them have norm $1$ and the derivative of $f$ at $(0,0)$ along any of them is 1, 1 being also the (dual) norm of the differential of $f$.

Thus in general we must work with a multivalued gradient. The definition can be given as  before: $w\in \nabla f(x)$ provided inequality \eqref{eq:grad} holds (notice that inequality \eqref{eq:grad0} remains valid in this higher generality). Being the gradient multivalued, we can't hope anymore to define a uniquely valued map $Df(\nabla g):\R^d\to\R$ and the best we can do is to consider its maximal and minimal values
\[
\begin{split}
D^+f(\nabla g)(x)&:=\max_{w\in\nabla g(x)}Df(x)(w),\\
D^-f(\nabla g)(x)&:=\min_{w\in\nabla g(x)}Df(x)(w).
\end{split}
\]
With arguments similar to those used to prove \eqref{eq:scambio} in the case of strictly convex norms, one can see that it holds
\begin{equation}
\label{eq:scambio2}
\begin{split}
D^+f(\nabla g)(x)&=\lim_{\eps\downarrow0}\frac{\|D(g+\eps f)(x)\|_*^2-\|Dg(x)\|_*^2}{2\eps},\\
D^-f(\nabla g)(x)&=\lim_{\eps\uparrow0}\frac{\|D(g+\eps f)(x)\|_*^2-\|Dg(x)\|_*^2}{2\eps}.
\end{split}
\end{equation}

On metric measure spaces $(X,\sfd,\mm)$ we don't have an a priori notion of differential and gradient of Sobolev functions, but as said we have a notion of `modulus of the distributional differential'. Therefore we can use the right hand sides of \eqref{eq:scambio2} to define the $\mm$-a.e. value of $D^\pm f(\nabla g)$ for Sobolev $f,g$. The existence of the limits is ensured by the $\mm$-a.e. convexity of the map $\eps\mapsto\weakgrad{(g+\eps f)}^2$.

It turns out that for fixed $g$, the map $f\mapsto D^+f(\nabla g)$ (resp. $f\mapsto D^-f(\nabla g)$) is positively 1-homogeneous, $\mm$-a.e. convex (resp. $\mm$-a.e. concave) and 1-Lipschitz in the sense that $|D^\pm f(\nabla g)-D^\pm\tilde f(\nabla g)|\leq \weakgrad{(f-\tilde f)}\weakgrad g$ $\mm$-a.e.. Conversely, for fixed $f$ the maps $g\mapsto D^\pm f(\nabla g)$ are positively 1-homogeneous and have some general semicontinuity property (Proposition \ref{prop:ConvHomog}). 

It is important to underline that we are not defining, nor we will, what are the differential of $f$ and the gradient of $g$, but only what is the value of `the differential of $f$ applied to the gradient of $g$', which is all one needs to integrate by parts (for a proposal of what is the gradient of a function see Definition 3.7 in \cite{Gigli12}). The fact that this is a reasonable definition comes from the validity of the chain rules
\[
\begin{split}
D^\pm (\varphi\circ f)(\nabla g)&=\varphi'\circ f D^{\pm\sign(\varphi'\circ f)}f(\nabla g),\\
D^\pm f(\nabla (\psi\circ  g))&=\psi'\circ g D^{\pm\sign(\psi'\circ g)}f(\nabla g),
\end{split}
\]
where $\varphi,\psi:\R\to\R$ are Lipschitz and of the Leibniz rules for differentials
\[
\begin{split}
D^+(f_1f_2)(\nabla g)&\leq f_1 D^{s_1}f_2(\nabla g)+f_2D^{s_2}f_1(\nabla g),\\
D^-(f_1f_2)(\nabla g)&\geq f_1 D^{-s_1}f_2(\nabla g)+f_2D^{-s_2}f_1(\nabla g),
\end{split}
\]
where $s_i$ is the sign of $f_i$, $i=1,2$. See Subsection \ref{se:dpmfg} for the precise statements. In connection with this calculus, there are two interesting particular cases.
\begin{itemize}
\item \underline{Infinitesimally strictly convex spaces,} i.e. spaces which resemble $\R^d$ with a strictly convex norm. This can be read at the abstract level as those spaces for which $D^+f(\nabla g)=D^-f(\nabla g)$ for any Sobolev $f,g$. In this case their common value will be denoted by $Df(\nabla g)$ and the calculus rules simplify, as the map $f\mapsto Df(\nabla g)$ is linear and the Leibniz rule for differentials holds as equality.
\item \underline{Infinitesimally Hilbertian spaces,} i.e. spaces which resemble $\R^d$ with a norm coming from a scalar product. This can be read in abstract by requiring that the Sobolev space $W^{1,2}(X,\sfd,\mm)$ is an Hilbert space (in general it is only Banach). With some work it can be proved that these spaces are infinitesimally strictly convex and that the object $Df(\nabla g)$ is symmetric in $f,g$, thus in particular it is bilinear and the Leibniz rule holds also for the gradients.
\end{itemize}
In connection to this, it is worth to underline that there are two sources of nonlinearity in nonlinear potential theory in metric measure spaces: one is due to the exponent $p$ which, when different from 2, causes the $p$-Laplacian to be nonlinear even in the Euclidean setting, the other is due to the nonlinear dependence of gradients from functions, well known in a Finsler context, which in general let the Laplacian be a nonlinear operator  (see for instance \cite{Shen98}).

All this comes from \cite{Gigli12} (see also \cite{Ambrosio-Gigli-Savare11bis} for the original discussion on infinitesimal Hilbertianity, there mentioned as `spaces with quadratic Cheeger energies'), where these notions have been used to define the distributional Laplacian and prove Laplacian comparison estimates on spaces with Ricci curvature bounded from below.

\bigskip

As mentioned, in this paper we show that these calculus tools allow to give a PDE description of $p$-minimizers   on doubling spaces supporting a weak local Poincar\'e inequality. The crucial definition that we give is that of distributional divergence: in short, given a Sobolev function $g$ on an open set $\Omega\subset X$ with $\weakgrad g\in L^p(\Omega,\mm)$  and $h\in L^q(\Omega,\mm)$ with $p,q\in(1,\infty)$ conjugate exponents, we say that $h\nabla g$ is in the domain of the divergence, and write $h\nabla g\in D(\bdiv,\Omega)$, provided there exists a measure $\mu$ on $\Omega$ such that the inequalities
\begin{equation}
\label{eq:divdef}
-\int_\Omega hD^{{\rm sign}(h)}f(\nabla g)\,\d\mm\leq \int_\Omega f\,\d\mu\leq\int_\Omega hD^{-{\rm sign}(h)}f(\nabla g)\,\d\mm,
\end{equation}
hold for any Lipschitz and compactly supported function $f$ on $\Omega$. In this case we write $\mu\in\bdiv(h\nabla g)\restr\Omega$, the notation in bold standing to remember that the divergence so defined is a measure, potentially multivalued. It is certainly unusual to deal with a multivalued divergence, but the lack of a single valued notion of `differential applied to gradient' implies that we can't write the integration by parts formula as equality, so that the best thing we can do is to ask $\int_\Omega f\,\d\mu$ to be between the maximal and minimal corresponding integrated values of $h D^\pm f(\nabla g)$, as in \eqref{eq:divdef}.
 
As for the object $D^\pm f(\nabla g)$, it should be noted that the expression $\bdiv(h\nabla g)$ is purely formal in the sense that we don't have a definition for $h\nabla g$ nor for $\bdiv$. The justification of the notation comes from the fact that the expected calculus rules hold. For instance we prove the chain rule
\[
\bdiv(h\nabla (\varphi\circ g))\restr\Omega=\bdiv(h\varphi'\circ g\,\nabla g)\restr\Omega,
\]
the Leibniz rule on infinitesimally strictly convex spaces
\[
\bdiv(h_1h_2\nabla g)\restr\Omega=Dh_1(\nabla g)h_2\mm\restr\Omega+ h_1\bdiv(h_2\nabla g),
\]
and the Leibniz rule for gradients on infinitesimally Hilbertian spaces
\[
\bdiv(h\nabla(g_1g_2))=\bdiv(hg_1\nabla g_2)+\bdiv(hg_2\nabla g_1).
\]
All these formulas are proved under quite general and natural assumptions on the functions involved, see Subsection \ref{se:calcrule}.

Then in Section \ref{se:main} we prove the main result of the paper, Theorem \ref{thm:Solution}, namely that a function $g$ on $\Omega$ is a $p$-minimizer if and only if $\weakgrad g^{p-2}\nabla g\in D(\bdiv,\Omega)$ with $0\in \bdiv(\weakgrad g^{p-2}\nabla g)\restr\Omega$. We also study the problem for $p$-subminimizers (resp. superminimizers) i.e. functions $g$ such that 
\[
\int_\Omega\weakgrad g^p\,\d\mm\leq \int_\Omega\weakgrad{(g+f)}^p\,\d\mm,
\]
holds for all non-positive (rep. non-negative) $f\in W^{1,p}_0(\Omega)$. In the Euclidean setting, the corresponding PDE characterization is $\nabla\cdot(|\nabla g|^{p-2}\nabla g)\geq 0$ (resp. $\leq 0$). The same result holds in the abstract setting provided we assume infinitesimal strict convexity, see Corollary \ref{cor:SupSolSIC} for the result and the discussion before it for a comment about such assumption.

In Section \ref{se:appl} we give three applications of this results.
\begin{itemize}
\item[1)] We employ the local nature of PDE's to prove that on infinitesimally strictly convex spaces $g$ is a $p$-subminimizer on $\Omega=\cup_i\Omega_i$ if and only if it is a $p$-subminimizer in each of the $\Omega_i$'s, thus answering in this case to the Open Problems 9.22 and 9.23 in \cite{BjornBjorn11}, see Proposition \ref{prop:sheaf}. We weren't able to drop the assumption on infinitesimals strict convexity not even in the case of $p$-minimizers, see Remark \ref{re:mah} for comments in this direction.
\item[2)] We give a new proof, based on PDE techniques rather than on variational methods, of the fact that the composition of a subminimizer with a convex and non-decreasing function is again a subminimizer. See Proposition \ref{prop:compsub}.
\item[3)] We show that the Busemann function associated to a line on an infinitesimally Hilbertian $CD(0,N)$ space is harmonic, see Proposition \ref{prop:lapbus}. Here we somehow invert the point of view and rather than using the differential calculus developed to prove statements concerning nonlinear potential theory, we employ the maximum principle proved in this latter setting to deduce a new PDE result. Indeed, in \cite{Gigli12} it has been proved that the Busemann function $\b$ associated to an half-line on a $CD(0,N)$ space satisfies $\bdiv(\nabla \b)\leq 0$ (under some assumptions on the space which are fulfilled in the infinitesimal Hilbertian case). This means that for the two Busemann functions $\b^+,\b^-$ associated to a line we know that $\bdiv(\nabla \b^+),\bdiv(\nabla \b^-)\leq 0$ and that $\b^++\b^-$ has a global minimum (see Subsection \ref{se:bus}). According to what is known in the smooth case, we would like to deduce that $\b^++\b^-$ is constant, which is typically proved via the strong maximum principle. The very same thing can be proved in the non-smooth setting once we know the validity of the strong maximal principle, which we do according to Theorem 9.13 in \cite{BjornBjorn11}. 

About the inequality $\bdiv(\nabla \b)\leq 0$, it is worth to underline that it is proved via means that have nothing to do, in principle, with nonlinear potential theory. Indeed, the technique used to get it is related to an `horizontal' derivation (typical in the mass transport context) rather than to a `vertical' one (more common in Sobolev analysis), see Section 3.2. in \cite{Gigli12} and in particular Theorem 3.10 for a discussion on these topics. 
\end{itemize}

\bigskip

\noindent\emph{In preparing this paper the authors have been partially supported by ERC ADG GeMeThNES}

\section{Preliminaries}
\subsection{Metric measure spaces}
Throughout all the paper $(X,\sfd)$ will be a complete and separable metric space. We denote by $B_r(x)$ the open ball of center $x \in X$ and radius $r>0$. $C([0,1],X)$ is the complete and separable metric  space of continuous curves from $[0,1]$ with values in $X$ equipped with the $\sup$ norm. 

A curve $\gamma \in C([0,1], X)$ is said to be absolutely continuous if there exists a function $f\in L^1([0,1])$ such that
\begin{equation}\label{def:ACcurve}
\sfd(\gamma_t,\gamma_s)\leq \int_s^t f(r) \;\d r, \quad \forall t,s \in [0,1],\ \textrm{s.t. } t<s.
\end{equation} 
The set of absolutely continuous curves from $[0,1]$ to $X$ will be denoted by $AC([0,1],X)$. More generally if the function $f$ in \eqref{def:ACcurve} belongs to $L^q([0,1])$, $q \in [1,\infty]$, $\gamma$ is said $q$-absolutely continuous, and $AC^q([0,1],X)$ is the corresponding set of $q$-absolutely continuous curves. Recall (see for example Theorem 1.1.2 in \cite{Ambrosio-Gigli-Savare08}) that if $\gamma \in AC^q([0,1],X)$ then the limit
$$\lim_{h \to 0} \frac{\sfd(\gamma_{t+h},\gamma_t)}{|h|} $$
exists for a.e. $t \in [0,1]$. Such function is called \emph{metric speed} or \emph{metric derivative}, is denoted by $|\dot{\gamma}_t|$ and is the minimal (in the a.e. sense) $L^q$ function which can be chosen as $f$ in the right hand side of \eqref{def:ACcurve}. 

For every $t \in [0,1]$, we define the \emph{evaluation map} $\e_t:C([0,1],X)\to X$ as
$$\e_t(\gamma)=\gamma_t, \qquad\qquad \forall \gamma \in C([0,1],X). $$

For $f:X\to \R$ the \emph{local Lipschitz constant} $|D f|:X\to [0,\infty]$ is defined by 
\[
|Df|(x):=\limsup_{y\to x} \frac{|f(y)-f(x)|}{\sfd(x,y)}, \quad \text{ if $x$ is not isolated, $0$ otherwise. }
\]

Given a Borel measure $\sigma$ on $X$,  $\supp(\sigma)$ is the support of $\sigma$, i.e. the smallest closed set on which $\sigma$ is concentrated. We denote by $\prob X$ the set of Borel probability measures on $X$.
\\

Endowing the metric space $(X,\sfd)$ with a measure $\mm$,  we get a so called \emph{metric measure space} $(X,\sfd, \mm)$. Throughout all the paper we will assume that 
\begin{equation}
\label{eq:mms}
\begin{split}
(X,\sfd)& \text{ is  a complete separable metric space and} \\
\mm & \text{ is a Borel non negative and doubling  measure on } X,
\end{split}
\end{equation}
where  doubling means  that for some constant $C>0$ it holds
\[
\mm(B_{2r}(x))\leq C\mm(B_r(x)),\qquad\forall x\in X,\ r>0.
\]

Let $p_0\geq 1$. We say that $(X,\sfd,\mm)$ supports a weak local $(1,p_0)$-Poincar\'e inequality (or more briefly a $p_0$-Poincar\'e inequality) if there exists constants $C_{PI}$ and $\lambda\geq 1$ such that for all  $x\in X$, $r>0$ and Lipschitz functions $f:X\to\R$ it holds
\begin{equation}\label{LPI}
\frac1{\mm(B_r(x))}\int_{B_r(x)} |f-f_{B_r(x)}| \, \d \mm \leq C_{PI}\, 2r \left(\frac{1}{\mm(B_{\lambda r}(x))} \int_{B_{\lambda r}(x)} |D f|^{p_0} \, \d \mm \right)^{\frac{1}{p_0}},
\end{equation}
where $f_{B_r(x)}:=\frac1{\mm(B_r(x))}\int _{B_r(x)} f \, \d \mm$. We remark that typically the Poincar\'e inequality is required to hold for integrable functions $f$ and upper gradients $G$ (see for instance Definition 4.1 in \cite{BjornBjorn11}), rather than for Lipschitz functions and their local Lipschitz constant. It is obvious that the second formulation implies the one we gave, but also the converse implication holds, as a consequence of the density in energy of Lipschitz functions in the Sobolev spaces proved in \cite{Ambrosio-Gigli-Savare-pq} for the case $p_0>1$ and in \cite{Ambrosio-DiMarino} for $p_0=1$. Therefore the definition we chose is equivalent to the standard one.
\begin{remark}{\rm
In this paper we will mostly work on doubling spaces supporting some Poincar\'e inequality, but we remark that actually several results are true on general complete separable metric spaces endowed with a locally finite measure $\mm$. Indeed, the algebra behind our main results, which is the one recalled in Section \ref{se:dpmfg}, remains true in this higher generality, as showed in \cite{Gigli12}. The choice to work with doubling\&Poincar\'e is motivated by the following facts:
\begin{itemize}
\item the goal of this paper is to provide a link between the theory developed in \cite{Gigli12} and nonlinear potential theory, and the latter is typically developed in doubling\&Poincar\'e spaces,
\item these assumptions greatly simplify the exposition thanks to the strong density results of Lipschitz functions in Sobolev spaces (recalled in Theorem \ref{thm:approxLip}) and to the independence of the $p$-minimal weak upper gradient on $p$ (see Theorem \ref{thm:weakgradp}). This latter fact in particular will allow us to define the objects $D^\pm f(\nabla g)$ (which are the non-smooth analogous of `the differential of $f$ applied to the gradient of $g$') without referring to a particular Sobolev exponent $p$.
\end{itemize}
\vspace{-1cm}}\fr\end{remark}

\subsection{Sobolev classes}\label{se:sobclass}
In this subsection we recall the definition of Sobolev classes $\s^p(X,\sfd,\mm)$, which are the metric-measure analogous of the spaces of functions having distributional gradient in $L^p$ when the ambient space is the Euclidean one, regardless of any integrability assumption on the function themselves. In particular, the Sobolev space $W^{1,p}(X,\sfd,\mm)$ will be defined as $L^p(X,\mm)\cap\s^p(X,\sfd,\mm)$. Different approaches to these spaces have been proposed in the literature, most of them being equivalent (see \cite{Ambrosio-Gigli-Savare-pq} for a discussion about this topic) here we follow the approach introduced in \cite{Ambrosio-Gigli-Savare11} and \cite{Ambrosio-Gigli-Savare11bis} (see also  \cite{Gigli12} for a presentation closer to the one given here).

\begin{definition}[$q$-test plan]\label{def:testplan}
Let $(X,\sfd,\mm)$ be as in \eqref{eq:mms} and $\ppi \in \Prob(C[0,1],X)$. We say that $\ppi$ has bounded compression if there exists $C>0$ such that
$$(e_t)_{\sharp} \ppi \leq C \mm, \quad \forall t \in [0,1].$$
For $q \in (1, \infty)$ we say that $\ppi$ is a \emph{$q$-test plan} if it has bounded compression, is concentrated on $AC^q([0,1],X)$ and 
$$\iint_0^1 |\dot{\gamma}_t|^q \, \d t \; \ppi(\gamma)<\infty. $$ 
\end{definition}

\begin{definition}[Sobolev classes]\label{def:Sobolev}
Let $(X,\sfd,\mm)$ be as in \eqref{eq:mms}, $p\in (1,\infty)$ and $q$ the conjugate exponent. A Borel function $f:X\to \R$ belongs to the Sobolev class $\s^p(X,\sfd,\mm)$ (resp. $\s^p_{loc}(X,\sfd,\mm)$) if there exists a function $G\in L^p(X,\mm)$ (resp. in $L^p_{loc} (X,\sfd,\mm)$) such that
\begin{equation}\label{eq:Sobolev}
\int |f(\gamma_1)-f(\gamma_0)| \, \d \ppi(\gamma) \leq \int \int_0^1 G(\gamma_s) |\dot{\gamma}_s| \, \d s\, \d \ppi(\gamma), \quad \forall\, q\text{\rm-test plan } \ppi. 
\end{equation}
In this case, $G$ is called a $p$-weak upper gradient of $f$.
\end{definition}
Since the class of $q$-test plans  contains the one of $q'$-test plans for $q\leq q'$, we have that $\s^p_{loc}(X,\sfd, \mm)\subset \s^{p'}_{loc}(X,\sfd,\mm)$ for $p\geq p'$, and if $f\in \s^p_{loc}(X,\sfd,\mm)$ and $G$ is a $p$-weak upper gradient, then $G$ is also a $p'$-weak upper gradient.

A basic property of $p$-weak upper gradients is their lower semicontinuity w.r.t. $\mm$-a.e. convergence, in the sense that it holds
\begin{equation}
\label{eq:lscw}
\left.\begin{array}{rlr}
f_n\!\!\!\!&\to f,&\qquad\mm-a.e.,\\
f_n\!\!\!\!&\in\s^p(X,\sfd,\mm),&\qquad\forall n\in\N,\\
G_n\!\!\!\!&\textrm{ is a $p$-weak upper gradient of }f_n,&\qquad\forall n\in\N,\\
G_n\!\!\!\!&\rightharpoonup G,&\quad\textrm{in }L^p(X,\mm)
\end{array}\right\}\quad\Rightarrow\quad 
\left\{
\begin{array}{l}
f\in\s^p(X,\sfd,\mm),\\
\\
G\textrm{ is a $p$-weak}\\
\textrm{upper gradient of }f.
\end{array}
\right.
\end{equation}

Arguing as in Section 4.5 of \cite{Ambrosio-Gigli-Savare-pq}, we get that for $f\in \s^p(X,\sfd, \mm)$ (resp. $\s^p_{loc}(X,\sfd, \mm)$) there exists a minimal function $G$, in the $\mm$-a.e. sense, in $L^p(X,\mm)$ (resp. $L^p_{loc}(X, \mm)$) such that \eqref{eq:Sobolev} holds. We will denote this function by $|Df|_{w,p}.$ 

It is clear that $\s^p(X,\sfd, \mm)$ and $\s^p_{loc}(X,\sfd,\mm)$ are vector spaces and that it holds
\begin{equation}\label{eq:subAdd}
|D(\alpha f + \beta g)|_{w,p}\leq |\alpha| |Df|_{w,p}+ |\beta| |D g|_{w,p}, \quad \mm\text{\rm-a.e.}, \quad \forall \alpha,\beta \in \R.
\end{equation}
Moreover the spaces $\s^p(X,\sfd,\mm)\cap L^\infty(X,\mm)$ and $\s^p_{loc}(X,\sfd,\mm)\cap L^\infty_{loc}(X,\mm)$ are algebras on which it holds
\begin{equation}\label{eq:Algebra}
|D(fg)|_{w,p}\leq |f| |D g|_{w,p}+ |g| |D f|_{w,p}, \quad \mm\text{\rm-a.e.}.
\end{equation}
It is also possible to check that the object $|Df|_{w,p}$ is local in the sense that
\begin{equation}\label{eq:LocalityWG}
\forall f \in \s^p_{loc}(X,\sfd,\mm)\text{ it holds } |D f|_{w,p}=0, \; \mm\text{-a.e. on } f^{-1}(N), \; \forall \, N\subset \R, \; \text{s. t. } {\cal L}^1 (N)=0,
\end{equation}
and 
\begin{equation}\label{eq:LocalityWG'}
|D f|_{w,p}=|D g|_{w,p}, \quad \mm\text{-a.e. on } \{f=g\},\quad \forall\, f,g \in \s^p_{loc}(X,\sfd,\mm).
\end{equation}
Also, for $f\in \s^p(X,\sfd,\mm)$ (resp. $\s^p_{loc}(X,\sfd,\mm)$) and $\varphi:\R\to \R$ Lipschitz, the function $\varphi\circ f$ still belongs to $\s^p(X,\sfd,\mm)$ (resp. $\s^p_{loc}(X,\sfd,\mm)$) and it holds
\begin{equation}\label{eq:composition}
|D(\varphi\circ f)|_{w,p}=|\varphi' \circ f| \,|D f|_{w,p} \quad \mm\text{-a.e.}.
\end{equation}
Thanks to the locality property \eqref{eq:LocalityWG'}, a natural localized definition of Sobolev class can be given:
\begin{definition}[The Sobolev classes $\s^{p}_{loc}(\Omega)$ and $\s^p(\Omega)$]\label{def:SplocOmega}
Let $(X,\sfd,\mm)$ be as in \eqref{eq:mms}, $\Omega\subset X$ an open subset and $p\in (1,\infty)$. The space $\s^p_{loc}(\Omega)$ is the space of Borel functions $f:\Omega \to \R$ such that $f\chi \in \s^p_{loc}(X,\sfd,\mm)$ for any Lipschitz function $\chi:X\to [0,1]$ compactly supported in $\Omega$. For $f\in\s^p_{loc}(\Omega)$, the function $|D f|_{w,p}\in L^p_{loc}(\Omega,\mm)$ is defined by
\begin{equation}\label{eq:DfOmega}
|D f|_{w,p}:=|D (\nchi f)|_{w,p}, \quad \mm{\textrm{\rm -a.e. on }}\{\chi=1\},
\end{equation}
where $\chi:X \to [0,1]$ is as above  and we are thinking the function $\nchi f$ to be defined on the whole $X$, with value 0 outside $\Omega$ (thanks to \eqref{eq:LocalityWG'} this is a good definition). 

The class $\s^p(\Omega)\subset\s^p_{loc}(\Omega)$ is the one of all $f$'s such that $|Df|_{w,p}\in L^p(\Omega)$.
\end{definition}
The locality principle \eqref{eq:LocalityWG'} and the local nature of \eqref{eq:subAdd}, \eqref{eq:Algebra} and \eqref{eq:composition} imply that these latter properties are valid $\mm$-a.e. on $\Omega$ for functions $f,g\in\s^p_{loc}(\Omega)$.

\bigskip

The Sobolev space $W^{1,p}(X,\sfd,\mm)$ is defined as $W^{1,p}(X,\sfd,\mm):=\s^p(X,\sfd,\mm)\cap L^p(X,\mm)$ endowed with the norm 
\[
\|f\|^p_{W^{1,p}(X,\sfd,\mm)}:= \|f\|_{L^p(X,\mm)}^p+\||Df|_{w,p}\|_{L^p(X,\mm)}^p.
\]
$W^{1,p}(X,\sfd,\mm)$ is always a Banach space (but notice that in general  $W^{1,2}(X,\sfd,\mm)$ is not an Hilbert space). Similarly, the Sobolev spaces $W^{1,p}(\Omega)$ and $W^{1,p}_{loc}(\Omega)$ are defined as $L^p(\Omega,\mm)\cap\s^p(\Omega)$ and  $L^p_{loc}(\Omega,\mm)\cap\s^p_{loc}(\Omega)$ respectively, the former being a Banach space with the norm $\|f\|^p_{W^{1,p}(\Omega)}:= \|f\|_{L^p(\Omega,\mm)}^p+\||Df|_{w,p}\|_{L^p(\Omega,\mm)}^p$.

This definition of Sobolev space coincides with the one of Newtonian space introduced in \cite{Shanmugalingam00}, as proved in \cite{Ambrosio-Gigli-Savare-pq}. Therefore we have the following density results of Lipschitz functions  (see for instance Theorem 5.1 in \cite{BjornBjorn11} and its proof). 
\begin{theorem}[Approximation with Lipschitz functions]\label{thm:approxLip}
Let $(X,\sfd,\mm)$ be as in \eqref{eq:mms}  supporting a  $p_0$-Poincar\'e inequality \eqref{LPI} and let   $p\geq p_0$ be strictly greater than 1.

Then for every  $f \in W^{1,p}(X,\sfd,\mm)$,  there exists a  sequence $\{f_n\}_{n \in \N}$ of Lipschitz functions $W^{1,p}$-converging to $f$ and  this sequence can be chosen to satisfy   $\{f_{n+1}\neq f\}\subset \{f_n\neq f\}$ and  $\mm(\{f_n\neq f\})\to 0$ as $n\to \infty$. 

Also, if $f$ is non-negative (resp. non positive) the $f_n$'s can also be chosen non-negative (resp. non-positive) as well.

Finally, if $f$ has compact support contained in some open set $\Omega\subset X$, the $f_n$'s can be chosen so that $\cup_n\supp(f_n)$ is compact and contained in $\Omega$ as well.
\end{theorem}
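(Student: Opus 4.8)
The plan is to use the maximal-function truncation technique of Haj{\l}asz and Koskela. By the Keith--Zhong self-improvement of the Poincar\'e inequality on complete doubling spaces we may assume, replacing $p_0$ by a slightly smaller exponent if necessary, that $p>p_0$ strictly. Write $g:=\weakgrad f\in L^p(X,\mm)$ for the minimal weak upper gradient of $f$ (the exponent is immaterial by the independence of the minimal weak upper gradient on $p$). The starting point is the pointwise inequality produced, via a telescopic sum over dyadic balls, by the Poincar\'e inequality \eqref{LPI} in its equivalent formulation for weak upper gradients: for $\mm$-a.e.\ $x,y\in X$,
\begin{equation*}
|f(x)-f(y)|\le C\,\sfd(x,y)\Big(\big(M_R g^{p_0}(x)\big)^{1/p_0}+\big(M_R g^{p_0}(y)\big)^{1/p_0}\Big),\qquad R:=2\lambda\,\sfd(x,y),
\end{equation*}
where $M_R$ is the restricted Hardy--Littlewood maximal operator and $C$ depends only on $C_{PI}$, $\lambda$ and the doubling constant. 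Setting $h:=(Mg^{p_0})^{1/p_0}$ and $E_t:=\{x:h(x)\le t\}$, this estimate shows that $f|_{E_t}$ is $2Ct$-Lipschitz.

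First I would extend $f|_{E_t}$ to a globally $2Ct$-Lipschitz function by the McShane formula $\tilde f_t(x):=\inf_{y\in E_t}\big(f(y)+2Ct\,\sfd(x,y)\big)$, so that $\tilde f_t=f$ on $E_t$ and hence $\{\tilde f_t\ne f\}\subset X\setminus E_t$. Since $p>p_0$, the operator $M$ is bounded on $L^{p/p_0}(X,\mm)$, whence $h\in L^p(X,\mm)$ and $t^p\,\mm(X\setminus E_t)\le\int_{\{h>t\}}h^p\,\d\mm\to0$ as $t\to\infty$. Choosing an increasing sequence $t_n\uparrow\infty$ and putting $f_n:=\tilde f_{t_n}$ makes the sets $E_{t_n}$ increasing, so that $\{f_{n+1}\ne f\}\subset\{f_n\ne f\}\subset X\setminus E_{t_n}$ and $\mm(\{f_n\ne f\})\to0$, as required.

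The core of the argument, and the step I expect to be the main obstacle, is the $W^{1,p}$-convergence $f_n\to f$. On $E_{t_n}\subset\{f_n=f\}$ the locality property \eqref{eq:LocalityWG'} gives $\weakgrad{f_n}=\weakgrad{f}$ $\mm$-a.e., so both the $L^p$ and the energy discrepancies are concentrated on the shrinking open bad sets $B_n:=X\setminus E_{t_n}$. To estimate them I would take a Whitney-type covering of $B_n$ by balls whose radii are comparable to the distance to $E_{t_n}$; on each such ball the oscillations of $f$ and of $f_n$ are controlled by $t_n$ times the radius, and summing with the maximal estimate yields
\begin{equation*}
\int_{B_n}|f_n-f|^p\,\d\mm+\int_{B_n}\weakgrad{f_n}^p\,\d\mm\le C\int_{B_n}\big(|f|^p+\weakgrad{f}^p+h^p\big)\,\d\mm,
\end{equation*}
whose right-hand side tends to $0$ by absolute continuity of the integral, since $\mm(B_n)\to0$ and $|f|^p,\weakgrad{f}^p,h^p\in L^1(X,\mm)$. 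Together with the locality identity on $\{f_n=f\}$ this gives $\|f_n-f\|_{L^p}\to0$ and $\|\weakgrad{f_n}-\weakgrad{f}\|_{L^p}\to0$, i.e.\ $W^{1,p}$-convergence.

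Finally, the two refinements follow by harmless post-processing preserving all the above. If $f\ge0$ I would replace $f_n$ by $\max(f_n,0)$: post-composition with the $1$-Lipschitz nondecreasing map $t\mapsto\max(t,0)$ keeps the functions Lipschitz, does not enlarge $\{f_n\ne f\}$ (as $f=\max(f,0)$), and does not increase the weak upper gradient by \eqref{eq:composition}, so convergence is retained; the case $f\le0$ is symmetric. If $f$ has compact support contained in a compact $K\subset\Omega$, I would fix a Lipschitz cutoff $\chi$ with $0\le\chi\le1$, $\chi\equiv1$ on a neighbourhood of $K$ and $\supp\chi$ compact in $\Omega$, and replace $f_n$ by $\chi f_n$. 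Since $\chi f=f$ and the product of a fixed bounded Lipschitz function with a $W^{1,p}$-convergent sequence still converges in $W^{1,p}$ (using \eqref{eq:Algebra} and the boundedness of $\chi,|D\chi|$), all the listed properties persist, while now $\cup_n\supp(\chi f_n)\subset\supp\chi$ is compact and contained in $\Omega$.
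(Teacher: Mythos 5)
The paper does not actually prove this theorem: it imports it, via the identification of $W^{1,p}$ with the Newtonian space, from Theorem 5.1 of \cite{BjornBjorn11}, and the argument behind that citation is precisely the maximal-function/Lipschitz-truncation scheme you outline, so your route is the intended one and is correct in outline. Two remarks on execution. (1) The step you yourself flag as the main obstacle is indeed the soft spot: the untruncated McShane extension grows linearly, need not belong to $L^p$, and your Whitney-covering estimate for $\int_{B_n}|f_n-f|^p\,\d\mm$ is asserted rather than carried out. The standard repair is to build a bound on $f$ into the good set from the start, e.g.\ $E_t:=\{h\le t\}\cap\{M(|f|^{p})\le t^{p}\}$, and to truncate the extension at the levels $\pm t$; then $f_n$ is still $2Ct_n$-Lipschitz, equals $f$ on $E_{t_n}$, and satisfies $|f_n|\le t_n$, so that $\int_{B_n}|f_n-f|^p\,\d\mm\le 2^{p-1}\big(t_n^p\mm(B_n)+\int_{B_n}|f|^p\,\d\mm\big)\to0$ follows directly from the decay $t^p\,\mm(X\setminus E_t)\to0$ and absolute continuity of the integral, while the gradient term is immediate from $\weakgrad{f_n}\le 2Ct_n$ on $B_n$ together with locality on $E_{t_n}$; no covering argument is needed. (2) Your appeal to Keith--Zhong is legitimate but avoidable: run the telescoping estimate at the exponent $p$ itself (the $p$-Poincar\'e inequality holds by H\"older since $p\ge p_0$), set $h:=(Mg^{p})^{1/p}$, and use the improved weak-type $(1,1)$ inequality $s\,\mm(\{Mg^{p}>s\})\le C\int_{\{g^{p}>s/2\}}g^{p}\,\d\mm$, which already yields $t^{p}\mm(X\setminus E_t)\to0$ for $g^p\in L^1(X,\mm)$; this covers the endpoint case $p=p_0$ with no self-improvement. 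Your post-processing for the sign and compact-support refinements is correct. A last cosmetic point: the construction directly yields $\{f_n\ne f\}\subseteq X\setminus E_{t_n}$ with the \emph{latter} sets nested and of vanishing measure, which is all that the paper ever uses; the literal inclusion $\{f_{n+1}\ne f\}\subseteq\{f_n\ne f\}$ does not quite follow from $E_{t_n}\subseteq E_{t_{n+1}}$ alone, since $f_n$ may accidentally agree with $f$ off $E_{t_n}$.
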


Notice that, up to the present knowledge, in general the quantity $|Df|_{w,p}$ may depend on $p$. Yet,  in case $(X,\sfd,\mm)$ is doubling and supports a  $p_0$-local Poincar\'e inequality, as a consequence of Cheeger's work \cite{Cheeger00} we have that  $|Df|_{w,p}$ is independent of $p$ for $p\geq p_0$ as recalled now.
\begin{theorem}\label{thm:weakgradp}
Let $(X,\sfd,\mm)$ be as in \eqref{eq:mms} and  supporting a  $p_0$-Poincar\'e inequality \eqref{LPI}, and let $p\geq p'\geq p_0$ with $p'>1$. 

Then  every $f\in \s^p_{loc}(X,\sfd,\mm)$ also belongs to $ \s^{p'}_{loc}(X,\sfd,\mm)$ and $|D f|_{w,p}=|D f|_{w,p'}$ $\mm$-a.e.
\end{theorem}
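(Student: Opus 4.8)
The plan is to separate the soft part of the claim from its hard part. The inclusion $\s^p_{loc}(X,\sfd,\mm)\subset\s^{p'}_{loc}(X,\sfd,\mm)$ together with the inequality $|Df|_{w,p'}\leq |Df|_{w,p}$ is essentially free and has already been recorded above: since $p\geq p'$ the conjugate exponents satisfy $q\leq q'$, so every $q'$-test plan is a $q$-test plan, and hence a $p$-weak upper gradient automatically satisfies \eqref{eq:Sobolev} against all $q'$-test plans, i.e. it is a $p'$-weak upper gradient. Applying this to $|Df|_{w,p}$ (which lies in $L^{p'}_{loc}$ because $p\geq p'$) and using minimality of $|Df|_{w,p'}$ gives $|Df|_{w,p'}\leq |Df|_{w,p}$. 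Thus the entire content of the statement is the reverse inequality $|Df|_{w,p}\leq |Df|_{w,p'}$ $\mm$-a.e., and it is here that the doubling and Poincar\'e assumptions must enter.

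Before attacking it I would reduce to globally defined, bounded, compactly supported functions. Using the chain rule \eqref{eq:composition} with the truncations $\varphi_k(t):=\max(\min(t,k),-k)$ and the locality \eqref{eq:LocalityWG'}, the two weak upper gradients coincide on $\{|f|<k\}$ as soon as they coincide for $\varphi_k\circ f$, so we may assume $f$ bounded; multiplying by a Lipschitz cutoff $\chi$ and invoking \eqref{eq:Algebra} and \eqref{eq:LocalityWG'} again reduces matters to $f\chi\in W^{1,p}(X,\sfd,\mm)\cap L^\infty$ with compact support, the equality being needed only on $\{\chi=1\}$; exhausting $X$ by such sets yields the full claim. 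Note that on these finite-measure sets the easy inequality already forces $|Df|_{w,p'}\in L^p_{loc}$.

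For the reverse inequality the decisive input is Cheeger's theory \cite{Cheeger00}, which I would first deploy on Lipschitz functions. For Lipschitz $u$ the local Lipschitz constant $|Du|$ is an upper gradient, hence a $p$-weak upper gradient for \emph{every} $p$, so $|Du|_{w,p}\leq |Du|$; Cheeger's differentiation theorem in doubling \& Poincar\'e spaces provides the matching lower bound $|Du|\leq|Du|_{w,p}$ $\mm$-a.e. Here one must identify the present minimal weak upper gradient with the one used in Cheeger's framework, which is exactly the equivalence of $W^{1,p}(X,\sfd,\mm)$ with the Newtonian space $N^{1,p}$ from \cite{Shanmugalingam00,Ambrosio-Gigli-Savare-pq}. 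Consequently $|Du|_{w,p}=|Du|$ for all $p\geq p_0$, and the Lipschitz case is manifestly $p$-independent.

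To pass to a general $f$ I would approximate it by Lipschitz $h_n\to f$ as in Theorem \ref{thm:approxLip}, taken with respect to the \emph{weaker} exponent $p'$, so that $|Dh_n|_{w,p'}\to|Df|_{w,p'}$ in $L^{p'}_{loc}$ while $|Dh_n|_{w,p}=|Dh_n|_{w,p'}$ by the Lipschitz case. The main obstacle is that a naive use of the lower semicontinuity \eqref{eq:lscw} only reproduces the easy inequality: strong $L^{p'}$ convergence of the gradients forces the limit to be a $p'$-weak upper gradient, not a $p$-weak one. The genuine mechanism, which is precisely what the Poincar\'e machinery supplies, is that the approximants can be built by maximal-function (Lipschitz) truncation so that $\mathrm{Lip}(h_n)=|Dh_n|_{w,p}$ is dominated $\mm$-a.e. by a Hardy--Littlewood maximal function of $|Df|_{w,p'}$; since $|Df|_{w,p'}\in L^p_{loc}$ by the reduction and $p>1$, the maximal operator is bounded on $L^p$ and the sequence $|Dh_n|_{w,p}$ stays bounded in $L^p_{loc}$. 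Extracting a subsequence with $|Dh_n|_{w,p}\rightharpoonup G$ weakly in $L^p_{loc}$, the strong $L^{p'}$ limit identifies $G=|Df|_{w,p'}$, and feeding $h_n\to f$ $\mm$-a.e. together with this weak convergence into \eqref{eq:lscw} shows that $|Df|_{w,p'}$ is a $p$-weak upper gradient of $f$, whence $|Df|_{w,p}\leq|Df|_{w,p'}$. In the write-up I would simply quote the resulting $p$-independence of minimal weak upper gradients in doubling \& Poincar\'e spaces from \cite{Cheeger00} (see also \cite{BjornBjorn11}); the paragraphs above serve only to isolate where that deep input is truly unavoidable, namely in the single inequality $|Df|_{w,p}\leq|Df|_{w,p'}$.
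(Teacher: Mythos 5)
Your proposal is correct and follows essentially the same route as the paper: the easy inequality $|Df|_{w,p'}\leq|Df|_{w,p}$ from the inclusion of test plans, a truncation and cut-off reduction to $W^{1,p}(X,\sfd,\mm)$, and then Lipschitz approximation via Theorem \ref{thm:approxLip} combined with Cheeger's identification of the minimal weak upper gradient of a Lipschitz function with its local Lipschitz constant. The paper outsources this last step to Corollary A.9 of \cite{BjornBjorn11} (itself an application of Theorem 6.1 of \cite{Cheeger00}), so the only real point of comparison is your final limiting argument, and there you take a heavier path than necessary. The $L^p_{loc}$ bound on $|Dh_n|_{w,p}$ via maximal-function domination is a property of the \emph{proof} of the density theorem rather than of its statement (the statement of Theorem \ref{thm:approxLip} gives no such control), and the weak-compactness plus \eqref{eq:lscw} step can be bypassed entirely: since Theorem \ref{thm:approxLip} provides $\{h_{n+1}\neq f\}\subset\{h_n\neq f\}$ with $\mm(\{h_n\neq f\})\to 0$, $\mm$-a.e.\ point eventually lies in $\{h_n=f\}$, and there the locality property \eqref{eq:LocalityWG'} applied at both exponents yields $|Df|_{w,p}=|Dh_n|_{w,p}=|Dh_n|_{w,p'}=|Df|_{w,p'}$ directly, the middle equality being the Lipschitz case you already established. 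This is exactly the mechanism the paper uses in the proof of Proposition \ref{prop:freddo}. Both arguments are valid; yours simply carries extra technical baggage at the final step.
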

\begin{proof} The fact that  $f \in \s^{p'}_{loc}(X,\sfd,\mm)$ and $|D f|_{w,p'}\leq |D f|_{w,p}$ are obvious. To prove that $|D f|_{w,p'}=|D f|_{w,p}$ we argue as follows. Due to the local nature of the thesis and with a truncation and cut-off argument we can assume that $f\in W^{1,p}(X,\sfd,\mm)$. Then we use Theorem \ref{thm:approxLip} and conclude as in  Corollary A.9 in \cite{BjornBjorn11} (which in turn is an application of Theorem 6.1 in \cite{Cheeger00}).   
\end{proof}

\subsection{The object $D^\pm f(\nabla g)$}\label{se:dpmfg}
From now on we will always assume that the space $(X,\sfd,\mm)$ is as in \eqref{eq:mms} and supports a $p_0$-Poicar\'e inequality \eqref{LPI} for some $p_0\in[1,\infty)$ so that, thanks to Theorem \ref{thm:weakgradp}, the weak upper gradients relative to $p,p'\geq p_0$ (if they exist) must coincide $\mm$-a.e..

In this subsection we recall the notion, introduced in \cite{Gigli12}, of differential of $f$ applied to the gradient of $g$, for $f$ and $g$ Sobolev functions and the related calculus rules. First of all notice that if $\varphi:\R \to \R^+$ is a convex function, the followings
\begin{equation}\nonumber
\liminf_{\varepsilon \downarrow 0} \frac{\varphi(\varepsilon)^p-\varphi(0)^p}{p \varepsilon \varphi(0)^{p-2}}, \quad  \limsup_{\varepsilon \uparrow 0} \frac{\varphi(\varepsilon)^p-\varphi(0)^p}{p \varepsilon \varphi(0)^{p-2}}
\end{equation} 
are actually limits as $\varepsilon\downarrow 0$, $\eps\uparrow 0$  respectively, and can be substituted by $\inf_{\varepsilon>0}$,  $\sup_{\varepsilon<0}$ respectively. Moreover they are equal to $\varphi(0) \varphi'(0^+), \varphi(0) \varphi'(0^-)$ respectively.

Now for a fixed $p\in (1,\infty)$ and for any $f,g \in \s^p_{loc}(\Omega)$, observe that \eqref{eq:subAdd} ensures that the map $\varepsilon \mapsto |D(g+\varepsilon f)|_{w,p}$ is convex in the sense that
\begin{equation}\label{eq:convexity}
|D(g+((1-\lambda)\varepsilon_0+\lambda \varepsilon_1)f)|_{w,p}\leq (1-\lambda)|D(g+\varepsilon_0 f)|_{w,p}+ \lambda |D(g+\varepsilon_1 f)|_{w,p}\quad \mm\text{-a.e.}
\end{equation}
for any $\lambda \in [0,1],\varepsilon_0,\varepsilon_1 \in \R$.   

\begin{definition}[$D^{\pm}f(\nabla g)$]\label{def:DfDg} 
Let $(X,\sfd,\mm)$ be as in \eqref{eq:mms} supporting  a $p_0$-Poincar\'e inequality \eqref{LPI},, $\Omega \subset X$ an open subset, $p\geq p_0$ strictly greater than 1 and   $f,g \in \s^p_{loc}(\Omega)$. The functions $D^{\pm}f(\nabla g):\Omega \to \R$ are $\mm$-a.e. defined by
\[
\begin{split}
D^+ f(\nabla g)&:=\liminf_{\varepsilon \downarrow 0} \frac{|D(g+\varepsilon f)|^p_{w}-|D g|^p_{w}}{p \varepsilon |D g|^{p-2}_{w}},  \\
D^- f(\nabla g)&:=\limsup_{\varepsilon \uparrow 0} \frac{|D(g+\varepsilon f)|^p_{w}-|D g|^p_{w}}{p \varepsilon |D g|^{p-2}_{w}}.
\end{split}
\]
on $\{|D g|_{w} \neq 0 \}$, and are taken $0$ by definition on $\{\weakgrad g =0 \}$.
\end{definition}

The initial discussion ensures that the limits in the definitions exist, do not depend on $p$, and can be substituted by $\inf_{\varepsilon>0}$ ($\sup_{\varepsilon<0}$ respectively). See the introduction for a discussion about the choice of the notation.

Throughout the paper, the expression $D^{\pm}f(\nabla g) |D g|_{w}^{p-2}$ on the set $\{|D g|_{w}=0\}$ will be taken 0 by definition. In this way it will always holds
\begin{eqnarray}
D^+ f(\nabla g)\, |D g|_{w}^{p-2}&=&\inf_{\varepsilon > 0} \frac{|D(g+\varepsilon f)|^p_{w}-|D g|^p_{w}}{p \varepsilon }, \nonumber \\
D^- f(\nabla g)\, |D g|_{w}^{p-2}&=&\sup_{\varepsilon < 0} \frac{|D(g+\varepsilon f)|^p_{w}-|D g|^p_{w}}{p \varepsilon }, \nonumber 
\end{eqnarray}
the equalities being intended $\mm$-a.e..

Notice that the inequality $|D(g+\varepsilon f)|_{w}\leq |D g|_{w}+|\varepsilon ||D f|_{w}$ yields
\begin{equation}\label{Schwartz}
|D^{\pm}f(\nabla g)| \leq |D f|_{w}\;|D g|_{w}, \quad \mm\textrm{-a.e. on } \Omega,
\end{equation}
and in particular  if $f,g \in \s^p(\Omega)$ (respectively $f,g \in \s^p_{loc}(\Omega)$) then $D^{\pm}f(\nabla g) |D g|_{w}^{p-2} \in L^1(\Omega)$ (respectively $D^{\pm}f(\nabla g) |D g|_{w}^{p-2} \in L^1_{loc}(\Omega)$).

The convexity of $f \mapsto |D(g+f)|_{w}^p$ gives 
\begin{equation}\label{eq:D-<D+}
D^-f(\nabla g) \leq D^+ f (\nabla g), \quad  \mm\textrm{-a.e. on } \Omega.
\end{equation}
Also, from the definition it directly follows that
\begin{equation}\label{eq:sign}
D^+ (-f)(\nabla g)=D^+f(\nabla(-g))=-D^{-}f(\nabla g)\quad \mm \textrm{-a.e. on } \Omega,
\end{equation}
and that
\begin{equation}\label{eq:Dg2}
D^{\pm}g(\nabla g)=|D g|_{w}^2 \quad  \mm \textrm{-a.e. on } \Omega.
\end{equation}
The locality properties \eqref{eq:LocalityWG}, \eqref{eq:LocalityWG'} of the weak gradients also imply that
\begin{equation}\label{eq:LocDfDg}
D^{\pm}f(\nabla g)=0, \; \mm \textrm{-a.e.   on }  f^{-1}(N)\cup g^{-1}(N), \; \forall \, N \subset \R\; \text{\rm such that } {\cal L}^1(N)=0,
\end{equation}
and that
\begin{equation}\label{eq:LocDfDg'}
D^{\pm}f(\nabla g)=D^{\pm}\tilde{f}(\nabla \tilde{g}), \quad \mm \textrm{-a.e.    on } \{f=\tilde{f}\}\cap \{g=\tilde{g}\}.
\end{equation}

The quantity $D^{\pm} f(\nabla g)$ satisfies also the useful homogeneity, convexity and semicontinuity properties stated in the next proposition (for the proof see Proposition 3.2 in \cite{Gigli12}).

\begin{proposition}[Basic properties of  $D^{\pm}f(\nabla g)$]\label{prop:ConvHomog}
Let $(X,\sfd,\mm)$ be as in \eqref{eq:mms} satisfying a $p_0$-Poincar\'e inequality \eqref{LPI},  $\Omega \subset X$ an open subset, $p\geq p_0$ strictly greater than 1 and $g\in \s^p_{loc}(\Omega)$.
Then the map
$$\s^p_{loc}(\Omega)\ni f \quad \mapsto\quad  D^+ f(\nabla g),  $$
is positively $1$-homogeneous and convex in the $\mm$-a.e. sense, i.e.
$$D^+((1-\lambda) f_0+ \lambda f_1)(\nabla g)\leq (1-\lambda) D^+ f_1(\nabla g)+\lambda D^+f_2(\nabla g), \quad \mm-a.e. \text{ on }\Omega, \ \forall\lambda\in[0,1] .$$
Similarly, 
$$\s^p_{loc}(\Omega)\ni f \quad \mapsto \quad D^- f(\nabla g),  $$
is positively 1-homogeneous and concave in the $\mm$-a.e. sense.

Moreover these maps are 1-Lipschitz in the following sense:
\begin{equation}
\label{eq:unolip}
|D^\pm f_1(\nabla g)-D^\pm f_2(\nabla g)|\leq |D(f_1-f_2)|_{w}|Dg|_{w} \qquad \mm-a.e.\quad\forall f_1,f_2 \in \s^p_{loc}(\Omega).
\end{equation}
Conversely, for given $f\in\s^p_{loc}(\Omega)$, it holds
\begin{equation}
\label{eq:semicont}
\begin{split}
\s^p_{loc}(\Omega)\ni g\qquad&\mapsto\qquad D^+f(\nabla g)\textrm{ is positively 1-homogeneous and upper semicontinuous},\\
\s^p_{loc}(\Omega)\ni g\qquad&\mapsto\qquad D^-f(\nabla g)\textrm{ is positively 1-homogeneous and lower semicontinuous},
\end{split}
\end{equation}
where upper semicontinuity is intended as follows: if $\Omega'\subset\Omega$ is open and $(g_n)\subset \s^p(\Omega')$ are such that $f,g\in\s^p(\Omega')$ and $\int_{\Omega'}\weakgrad{(g-g_n)}^p\,\d\mm\to 0$, then it holds
\[
\lims_{n\to\infty}\int_{\Omega'} D^+f(\nabla g_n)\weakgrad{g_n}^{p-2}\,\d\mm\leq \int_{\Omega'} D^+f(\nabla g)\weakgrad{g}^{p-2}\,\d\mm,
\]
similarly for lower semicontinuity.
\end{proposition}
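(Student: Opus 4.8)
The plan is to reduce every assertion to the two representations recorded just after Definition \ref{def:DfDg}, namely
\[
D^+f(\nabla g)\,|Dg|_w^{p-2}=\inf_{\varepsilon>0}\Phi_\varepsilon,\qquad \Phi_\varepsilon:=\frac{|D(g+\varepsilon f)|_w^p-|Dg|_w^p}{p\varepsilon},
\]
together with the facts that (i) $h\mapsto |Dh|_w^p$ is convex, being the composition of the subadditive positively $1$-homogeneous map \eqref{eq:subAdd} with the convex increasing map $t\mapsto t^p$; (ii) for fixed $f,g$ the quotient $\Phi_\varepsilon$ is non-decreasing in $\varepsilon>0$ (difference quotients of the convex function $\varepsilon\mapsto|D(g+\varepsilon f)|_w^p$), so that $\inf_{\varepsilon>0}\Phi_\varepsilon=\lim_{\varepsilon\downarrow0}\Phi_\varepsilon$; and (iii) the sign rule \eqref{eq:sign}. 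Throughout, all the assertions about $D^-$ will be deduced from those for $D^+$ via $D^-f(\nabla g)=-D^+(-f)(\nabla g)$ and $D^-f(\nabla g)=-D^+f(\nabla(-g))$, so I will only treat $D^+$ explicitly.

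For the properties in the first variable I would proceed as follows. Positive $1$-homogeneity $D^+(\lambda f)(\nabla g)=\lambda D^+f(\nabla g)$ for $\lambda>0$ follows from the change of variable $\varepsilon\mapsto\lambda\varepsilon$ in the $\inf$-representation. For convexity, fix $\lambda\in[0,1]$ and $f_0,f_1$; applying the convexity (i) of $h\mapsto|Dh|_w^p$ to $h_0=g+\varepsilon f_0$ and $h_1=g+\varepsilon f_1$ and subtracting $|Dg|_w^p$ gives, for each fixed $\varepsilon>0$ and $\mm$-a.e.,
\[
\frac{|D(g+\varepsilon((1-\lambda)f_0+\lambda f_1))|_w^p-|Dg|_w^p}{p\varepsilon}\leq (1-\lambda)\Phi_\varepsilon^{0}+\lambda\Phi_\varepsilon^{1},
\]
where $\Phi_\varepsilon^i$ is the quotient built from $f_i$; letting $\varepsilon\downarrow0$ along a sequence yields the claimed convexity. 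Since a convex positively $1$-homogeneous map is subadditive, I then obtain $D^+f_1(\nabla g)-D^+f_2(\nabla g)\leq D^+(f_1-f_2)(\nabla g)$, and the Cauchy--Schwarz-type bound \eqref{Schwartz} applied to $f_1-f_2$ turns this into the $1$-Lipschitz estimate \eqref{eq:unolip} after exchanging the roles of $f_1,f_2$.

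For the properties in the second variable, positive $1$-homogeneity $D^+f(\nabla(\lambda g))=\lambda D^+f(\nabla g)$ for $\lambda>0$ again follows by rescaling $\varepsilon$ and using $|D(\lambda g)|_w=\lambda|Dg|_w$. The genuinely delicate point, and the one I expect to be the main obstacle, is the upper semicontinuity. Here I would fix $\varepsilon>0$, bound $D^+f(\nabla g_n)\,|Dg_n|_w^{p-2}\leq \Phi_\varepsilon^{(n)}$ pointwise by the $\inf$-representation, and integrate over $\Omega'$. The hypothesis $\int_{\Omega'}|D(g-g_n)|_w^p\,\d\mm\to0$ together with the reverse-triangle inequality $\big||Dg_n|_w-|Dg|_w\big|\leq|D(g_n-g)|_w$ (and its analogue for $g_n+\varepsilon f$) gives $L^p$-convergence of the weak gradients, hence convergence of the two integrals $\int|D(g_n+\varepsilon f)|_w^p\,\d\mm$ and $\int|Dg_n|_w^p\,\d\mm$; passing to the $\limsup$ in $n$ I obtain, for every $\varepsilon>0$,
\[
\limsup_{n\to\infty}\int_{\Omega'}D^+f(\nabla g_n)\,|Dg_n|_w^{p-2}\,\d\mm\leq\frac1{p\varepsilon}\Big(\int_{\Omega'}|D(g+\varepsilon f)|_w^p\,\d\mm-\int_{\Omega'}|Dg|_w^p\,\d\mm\Big).
\]
It then remains to take the infimum over $\varepsilon>0$ on the right and recognize it as $\int_{\Omega'}D^+f(\nabla g)\,|Dg|_w^{p-2}\,\d\mm$; this is exactly where the monotonicity (ii) enters, since $\Phi_\varepsilon$ decreases to $D^+f(\nabla g)|Dg|_w^{p-2}$ as $\varepsilon\downarrow0$ and is dominated (from above by $\Phi_1$, from below by its $L^1$ limit), so monotone/dominated convergence commutes the infimum with the integral. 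Finally, the lower semicontinuity of $g\mapsto D^-f(\nabla g)$ follows by applying this result to $-g_n,-g$ through \eqref{eq:sign}.
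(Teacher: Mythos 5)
Your argument is correct and is essentially the proof the paper relies on: the paper does not prove Proposition \ref{prop:ConvHomog} itself but defers to Proposition 3.2 of \cite{Gigli12}, whose proof is exactly this reduction to the representation $D^{+}f(\nabla g)\,\weakgrad{g}^{p-2}=\inf_{\eps>0}\tfrac{1}{p\eps}\bigl(\weakgrad{(g+\eps f)}^{p}-\weakgrad{g}^{p}\bigr)$, the convexity/subadditivity of $h\mapsto\weakgrad{h}^{p}$, the monotonicity of the difference quotients, and the sign identity \eqref{eq:sign}. All steps (including the exchange of $\inf_{\eps}$ with the integral via monotone convergence dominated by the integrable $\eps=1$ quotient, and the passage from convexity plus homogeneity to subadditivity and then to \eqref{eq:unolip} via \eqref{Schwartz}) are sound.
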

We shall also need the following calculus rules proved in Section 3.3 of \cite{Gigli12} (notice that these properties were proved for functions in $\s^p_{loc}(X,\sfd,\mm)$, but due to the  locality properties of the statements, they directly apply to functions in $\s^p_{loc}(\Omega)$).
\begin{proposition}[Chain rules]\label{Prop:ChainRule}
Let $(X,\sfd,\mm)$ be as in \eqref{eq:mms} satisfying a $p_0$-Poincar\'e inequality \eqref{LPI}, $\Omega \subset X$ an open subset, $p\geq p_0$ strictly greater than 1 and  $f,g\in \s^p_{loc}(\Omega)$.

Then for any $\varphi:\R\to \R$ Lipschitz it holds
\begin{equation}\label{eq:ChainRulef}
D^{\pm}(\varphi\circ f)(\nabla g)=(\varphi'\circ f) \; D^{\pm \sign(\varphi'\circ f)} f (\nabla g), \quad \mm\text{\rm-a.e.  on }\Omega,
\end{equation}
where the right hand side is taken $0$ by definition at points $x$ where $\varphi$ is not differentiable at $f(x)$.

Similarly, for any $\psi:\R\to \R$  Lipschitz it holds
\begin{equation}\label{eq:ChainRuleg}
D^{\pm}f(\nabla(\psi \circ g))=(\psi'\circ g) \; D^{\pm \sign(\psi'\circ g)} f (\nabla g), \quad  \mm\text{\rm-a.e.  on }\Omega,
\end{equation}
where the right hand side is taken $0$ by definition at points $x$ where $\psi$ is not differentiable at $g(x)$.
\end{proposition}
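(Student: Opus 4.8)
The plan is to establish both identities by a three-step reduction: first the affine case, then piecewise affine maps via locality, and finally general Lipschitz maps by approximation. The non-differentiability points are absorbed at the outset: since a Lipschitz $\varphi$ fails to be differentiable only on a Lebesgue-null set $N\subset\R$, property \eqref{eq:LocalityWG} gives $|Df|_w=0$ $\mm$-a.e. on $f^{-1}(N)$, whence by \eqref{Schwartz} (together with $|D(\varphi\circ f)|_w\le\Lip(\varphi)|Df|_w$) both sides of \eqref{eq:ChainRulef} vanish there, in agreement with the stated convention; the same remark applies to $g$ and $\psi$ in \eqref{eq:ChainRuleg}. For the affine case $\varphi(t)=at+b$ one notes that the constant $b$ drops out of every difference quotient in Definition \ref{def:DfDg} (constants having null weak gradient), so $D^\pm(\varphi\circ f)(\nabla g)=D^\pm(af)(\nabla g)$; positive $1$-homogeneity (Proposition \ref{prop:ConvHomog}) settles $a>0$, and for $a<0$ I would write $af=|a|(-f)$ and invoke the sign relation \eqref{eq:sign}, producing exactly $a\,D^{\pm\sign(a)}f(\nabla g)$. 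The affine case of \eqref{eq:ChainRuleg} is identical, using instead the positive $1$-homogeneity of $g\mapsto D^\pm f(\nabla g)$ recorded in \eqref{eq:semicont}, the scaling $|D(\psi\circ g)|_w=|\psi'\circ g|\,|Dg|_w$ from \eqref{eq:composition}, and again \eqref{eq:sign}.

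Next, if $\varphi$ is piecewise affine with slope $a_k$ on an interval $I_k$, then on $f^{-1}(I_k)$ the function $\varphi\circ f$ coincides with the globally affine function $a_kf+b_k$, so the locality property \eqref{eq:LocDfDg'} identifies $D^\pm(\varphi\circ f)(\nabla g)$ with $D^\pm(a_kf+b_k)(\nabla g)$ there, and the affine case applies; the countably many kinks land in $f^{-1}$ of a null set, already treated. For a general Lipschitz $\varphi$ I would take piecewise affine interpolations $\varphi_n$ on a mesh of width $1/n$, so that $\varphi_n\to\varphi$ locally uniformly and $\varphi_n'\to\varphi'$ $\mathcal L^1$-a.e.; enlarging $N$ to contain also the exceptional values, we have $\varphi_n'\circ f\to\varphi'\circ f$ $\mm$-a.e. off $f^{-1}(N)$, where $|Df|_w=0$. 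The passage to the limit in \eqref{eq:ChainRulef} is then clean thanks to the $1$-Lipschitz estimate \eqref{eq:unolip} in the differential slot, which bounds $|D^\pm(\varphi_n\circ f)(\nabla g)-D^\pm(\varphi\circ f)(\nabla g)|$ by $|(\varphi_n'-\varphi')\circ f|\,|Df|_w|Dg|_w\to0$ $\mm$-a.e. On the right-hand side the factors $D^{\pm\sign(\varphi_n'\circ f)}f(\nabla g)$ are bounded in absolute value by $|Df|_w|Dg|_w$ uniformly in $n$ via \eqref{Schwartz}; where $\varphi'\circ f\neq0$ the sign stabilises and the limit is immediate, and where $\varphi'\circ f=0$ the vanishing prefactor $\varphi_n'\circ f$ forces the product to $0$. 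This proves \eqref{eq:ChainRulef}.

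The \emph{main obstacle} is the general Lipschitz case of \eqref{eq:ChainRuleg}, because the base-point slot $g\mapsto D^\pm f(\nabla g)$ enjoys no analogue of \eqref{eq:unolip}: it is merely semicontinuous. Running the piecewise affine case gives $D^\pm f(\nabla(\psi_n\circ g))=(\psi_n'\circ g)\,D^{\pm\sign(\psi_n'\circ g)}f(\nabla g)$ exactly, and since $(\psi-\psi_n)\circ g\to0$ in energy on each open $\Omega'\subset\Omega$, the upper-semicontinuity half of \eqref{eq:semicont}, whose integrand $D^+f(\nabla(\psi_n\circ g))\,|D(\psi_n\circ g)|_w^{p-2}$ is dominated by $\Lip(\psi)^{p-1}|Df|_w|Dg|_w^{p-1}\in L^1(\Omega')$ and converges $\mm$-a.e. to $(\psi'\circ g)D^{+\sign(\psi'\circ g)}f(\nabla g)\,|D(\psi\circ g)|_w^{p-2}$, yields (as $\Omega'$ is arbitrary) only the one-sided bound $(\psi'\circ g)D^{+\sign(\psi'\circ g)}f(\nabla g)\le D^+f(\nabla(\psi\circ g))$ $\mm$-a.e. on $\{|D(\psi\circ g)|_w>0\}$, the complementary set being trivial since there $|D(\psi\circ g)|_w=0$ makes both sides $0$ by definition; the symmetric inequality for $D^-$ comes from the lower-semicontinuity half. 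To reverse these I would localise to $\{\psi'\circ g>0\}$ and $\{\psi'\circ g<0\}$ and apply the bound just obtained to a Lipschitz inverse of the now strictly monotone $\psi$, based at $\psi\circ g$: this converts the inequality into its opposite and, combined with the ordering \eqref{eq:D-<D+} and the sign relations \eqref{eq:sign}, pins down the equality. Making this inversion/monotonisation step rigorous $\mm$-a.e. is the delicate point, but once it is in place \eqref{eq:ChainRuleg} follows.
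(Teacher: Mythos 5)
The paper does not actually prove this proposition---it is quoted from Section 3.3 of \cite{Gigli12}---so your argument has to stand on its own. The part concerning \eqref{eq:ChainRulef} does: the reduction affine $\to$ piecewise affine $\to$ general Lipschitz is sound, because the slot $f\mapsto D^{\pm}f(\nabla g)$ is genuinely $1$-Lipschitz by \eqref{eq:unolip}, and your handling of the null set of non-differentiability, of the sign via \eqref{eq:sign}, and of the locality step via \eqref{eq:LocDfDg'} is correct. Likewise, your derivation of the one-sided bounds $(\psi'\circ g)D^{+\sign(\psi'\circ g)}f(\nabla g)\le D^{+}f(\nabla(\psi\circ g))$ and the mirror bound for $D^-$ from \eqref{eq:semicont} is fine.

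The gap is the closing step for \eqref{eq:ChainRuleg}, and it is a real one: the inversion/monotonisation you invoke does not exist in the generality you need. First, $\psi'>0$ on a Borel set $A$ does not make $\psi$ monotone, or even injective, on $A$ or on any neighbourhood of a point of $A$ (take $\psi(z)=z/2+z^{2}\sin(1/z)$ near $0$: $\psi'(0)=1/2$ but $\psi$ is monotone on no interval around $0$); second, even a strictly increasing Lipschitz $\psi$ has a Lipschitz inverse only when $\psi'$ is bounded away from $0$; third, and most seriously, the localisation itself is illegitimate, because the locality property \eqref{eq:LocDfDg'} for the base-point slot is with respect to the set where the \emph{functions} $\psi\circ g$ and $\tilde\psi\circ g$ coincide, not where their derivatives do, so you cannot trade $\psi$ for a monotonised surrogate on $\{\psi'\circ g>0\}$. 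The inversion trick does work verbatim when $\psi'\ge\delta>0$ (then $\psi^{-1}$ is Lipschitz and applying your one-sided bound to it reverses the inequality), but that case does not exhaust the general one. A way to actually close the argument, still in the spirit of your difference-quotient analysis, is to freeze the slope: for a constant $\lambda>0$ write $\psi\circ g+\eps f=(\psi\circ g-\lambda g)+\lambda\bigl(g+\tfrac{\eps}{\lambda}f\bigr)$, so that \eqref{eq:subAdd} and \eqref{eq:composition} give, $\mm$-a.e.\ on $\{|\psi'\circ g-\lambda|\le\eta\}$, the bound $|D(\psi\circ g+\eps f)|_w\le \lambda|D(g+\tfrac{\eps}{\lambda}f)|_w+\eta|Dg|_w$; inserting this into the difference quotient defining $D^{+}f(\nabla(\psi\circ g))$, letting $\eta\downarrow0$ (so $\lambda\to\psi'\circ g$) at fixed $\eps$ and only then taking $\inf_{\eps>0}$ produces exactly the missing inequality $D^{+}f(\nabla(\psi\circ g))\le(\psi'\circ g)D^{+}f(\nabla g)$ where $\psi'\circ g>0$, after which \eqref{eq:sign} and \eqref{eq:D-<D+} dispatch the remaining cases as you indicate. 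As written, however, your proof of \eqref{eq:ChainRuleg} establishes only half of the claimed identity.
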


\begin{proposition}[Leibniz rule]\label{prop:Leibniz}
Let $(X,\sfd,\mm)$ be as in \eqref{eq:mms} satisfying a $p_0$-Poincar\'e inequality \eqref{LPI},  $\Omega \subset X$ an open subset, $p\geq p_0$ strictly greater than 1, $f_1,f_2\in \s^p_{loc}(\Omega)\cap L^\infty_{loc}(\Omega)$ and  $g\in \s^p_{loc}(\Omega)$.

Then $\mm$-a.e. on ${\Omega}$  it holds 
\begin{eqnarray}
D^+(f_1f_2)(\nabla g)&\leq& f_1 D^{s_1} f_2(\nabla g)+f_2 D^{s_2} f_1(\nabla g), \nonumber\\
D^-(f_1f_2)(\nabla g)&\geq& f_1 D^{-s_1} f_2(\nabla g)+f_2 D^{-s_2} f_1(\nabla g), \nonumber
\end{eqnarray}
with $s_i:=\sign f_i$, $i=1,2$.
\end{proposition}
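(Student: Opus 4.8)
The plan is to reduce the Leibniz rule to the chain rules of Proposition \ref{Prop:ChainRule} combined with the sub/superadditivity of $f\mapsto D^\pm f(\nabla g)$, by writing the product as an exponential of a sum of logarithms. First I would dispose of the degenerate set. On $\{f_1=0\}\cup\{f_2=0\}$ we have $f_1f_2=0$, so by the locality \eqref{eq:LocalityWG'} (comparing $f_i$ and $f_1f_2$ with the zero function) both $\weakgrad{f_i}$ and $\weakgrad{(f_1f_2)}$ vanish $\mm$-a.e.\ there; together with the bound \eqref{Schwartz} this forces both sides of the two claimed inequalities to vanish $\mm$-a.e.\ on this set. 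Hence it suffices to argue $\mm$-a.e.\ on $\{f_1\neq 0\}\cap\{f_2\neq 0\}$, where $s_1,s_2$ are locally constant. Since all statements are local and $\mm$-a.e., I would further restrict to a fixed ball on which $f_1,f_2\in L^\infty$ (using $f_i\in L^\infty_{loc}(\Omega)$, with bound $M$) and, for fixed $a>0$, to the set $\{|f_1|\ge a\}\cap\{|f_2|\ge a\}$; letting $a\downarrow 0$ and exhausting $\Omega$ by balls recovers the full statement.

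On such a set I set $u:=\log|f_1|+\log|f_2|$ and $\sigma:=s_1s_2=\sign(f_1f_2)\in\{\pm1\}$, so that $f_1f_2=\sigma\,\e^{u}$. The core computation is the chain rule \eqref{eq:ChainRulef}: applying it with the functions $t\mapsto\sigma\e^{t}$ and $t\mapsto\log|t|$ gives, $\mm$-a.e.,
\[
D^+(f_1f_2)(\nabla g)=f_1f_2\,D^{\,\sigma}u(\nabla g),\qquad D^{\pm}(\log|f_i|)(\nabla g)=\tfrac1{f_i}\,D^{\pm s_i}f_i(\nabla g),
\]
where $D^{\sigma}$ denotes $D^+$ if $\sigma=+1$ and $D^-$ if $\sigma=-1$. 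When $\sigma=+1$ I use subadditivity of $D^+$ (which follows from its convexity and positive $1$-homogeneity in Proposition \ref{prop:ConvHomog}) to bound $D^+u\le D^+\log|f_1|+D^+\log|f_2|$, and since $f_1f_2>0$ the inequality survives multiplication; when $\sigma=-1$ I use superadditivity of $D^-$ and the fact that $f_1f_2<0$ flips the inequality. In both cases, substituting the logarithmic chain rules and using $s_1=s_2$ (resp.\ $s_1=-s_2$) yields exactly $D^+(f_1f_2)(\nabla g)\le f_1D^{s_1}f_2(\nabla g)+f_2D^{s_2}f_1(\nabla g)$.

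The inequality for $D^-$ then follows formally, with no new work, by applying the proven $D^+$ inequality to $-f_1$ in place of $f_1$ and invoking the sign relations \eqref{eq:sign} (which give $D^-(f_1f_2)=-D^+((-f_1)f_2)$ and $D^{s_2}(-f_1)=-D^{-s_2}f_1$); this reproduces $D^-(f_1f_2)(\nabla g)\ge f_1D^{-s_1}f_2(\nabla g)+f_2D^{-s_2}f_1(\nabla g)$.

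The hard part is the technical legitimation of the logarithm: $\log|\cdot|$ is not Lipschitz, so \eqref{eq:ChainRulef} cannot be applied to it directly, and neither can the exponential since $u$ must be seen to lie in a compact range. This is exactly why I restrict to $\{a\le|f_i|\le M\}$: there I replace $\log|\cdot|$ and $\sigma\e^{\cdot}$ by genuinely Lipschitz functions coinciding with them on the relevant bounded intervals (note $u$ ranges in the bounded set $[2\log a,2\log M]$), apply the chain rule to these modifications, and transfer the resulting identities back using the locality \eqref{eq:LocDfDg'} and the composition rule \eqref{eq:composition}, which guarantee that neither $D^\pm$ nor the weak gradients see the modification on the set where $|f_i|\in[a,M]$. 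Passing to the limit $a\downarrow0$ and then exhausting $\Omega$ by balls completes the argument.
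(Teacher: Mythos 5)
Your proof is correct and follows essentially the same route as the source the paper relies on for this proposition (it is not proved here but imported from Section 3.3 of \cite{Gigli12}): reduce by locality to the set where $|f_1|,|f_2|$ are bounded away from $0$ and $\infty$, write $f_1f_2=\pm\e^{\log|f_1|+\log|f_2|}$ with Lipschitz truncations of $\log$ and $\exp$, and combine the chain rule \eqref{eq:ChainRulef} with the sub/superadditivity of $f\mapsto D^{\pm}f(\nabla g)$ from Proposition \ref{prop:ConvHomog} and the sign identities \eqref{eq:sign}. The only slip is cosmetic: on $\{f_1=0\}\cup\{f_2=0\}$ it is not true that both $\weakgrad{f_1}$ and $\weakgrad{f_2}$ vanish $\mm$-a.e., only $\weakgrad{f_i}$ on $\{f_i=0\}$ together with $\weakgrad{(f_1f_2)}$, which is nonetheless enough to make both sides of the claimed inequalities vanish there.
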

\begin{remark}\label{re:noleibgrad}{\rm
We recall that on a smooth Finsler manifold the Leibniz rule for gradients
\[
\nabla(g_1g_2)=g_1\nabla g_2+g_2\nabla g_1,
\]
holds for any couple of smooth functions $g_1,g_2$ if and only if the manifold is Riemannian. Hence in the general metric-measure theoretic framework in order to obtain a Leibniz rule valid for gradients we need to make an assumption which resemble `the norms on the tangent spaces come from scalar products'. Such assumption will be called infinitesimal Hilbertianity and  discussed below.
}\fr\end{remark}
Finally let us recall the definition of \emph{$q$-infinitesimally strictly convex} and \emph{infinitesimally Hilbertian} spaces  and related calculus rules.
\begin{definition}[$q$-infinitesimally strictly convex spaces]\label{def:InfStConv}
Let $(X,\sfd,\mm)$ be as in \eqref{eq:mms} satisfying a $p_0$-Poincar\'e inequality \eqref{LPI}, $\Omega \subset X$ an open subset, $p\geq p_0$ strictly greater than 1 and $q$ the  conjugate exponent. We say that $(X,\sfd,\mm)$ is \emph{$q$-infinitesimally strictly convex} if 
\begin{equation}\label{eq:InfStConv}
\int D^+ f(\nabla g)\, |D g|_{w}^{p-2} \d \mm = \int D^- f(\nabla g)\, |D g|_{w}^{p-2} \d \mm, \quad \forall f,g \in \s^p(X,\sfd,\mm).
\end{equation}
\end{definition}
From the inequality \eqref{eq:D-<D+}, we get that the integral equality \eqref{eq:InfStConv} is equivalent to the pointwise one:
\begin{equation}\label{eq:InfStConvPoint}
D^+f(\nabla g)=D^- f (\nabla g), \quad \mm\text{-a.e.,} \quad \forall \, f,g \in \s^p_{loc}(X,\sfd,\mm). 
\end{equation}
Therefore if $(X,\sfd,\mm)$ is $q$-infinitesimally strictly convex, then for every open subset $\Omega$ 
\begin{equation}\label{eq:InfStConvPointOmega}
D^+f(\nabla g)=D^- f (\nabla g), \quad \mm\textrm{-a.e. on }\Omega,  \quad \forall \, f,g \in \s^p_{loc}(\Omega). 
\end{equation}
Notice that in presence of a $p_0$-Poincar\'e inequality the identity $D^+f(\nabla g)=D^-(\nabla g)$ is observed for any $f,g\in \s^{p_0}_{loc}(\Omega)$, as shown by the following proposition.
\begin{proposition}\label{prop:freddo}
Let $(X,\sfd,\mm)$ be as in \eqref{eq:mms} and supporting a $p_0$-Poincar\'e inequality \eqref{LPI}, $p_0>1$. Let $p\geq p_0$ and $q$ be the conjugate exponent. Assume that $(X,\sfd,\mm)$ is $q$-infinitesimally strictly convex. Then for every $\Omega\subset X$ open and any $f,g\in\s^{p_0}_{loc}(\Omega)$ it holds
\[
D^+f(\nabla g)=D^-f(\nabla g),\qquad\mm\text{\rm -a.e. on }\Omega.
\]
In other words, the space is also $q_0$-infinitesimally strictly convex, $q_0$ being the conjugate exponent of $p_0$.
\end{proposition}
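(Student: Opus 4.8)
The plan is to reduce the assertion, which concerns the \emph{large} class $\s^{p_0}_{loc}(\Omega)$, to the already available equality \eqref{eq:InfStConvPointOmega} on the \emph{smaller} class $\s^p_{loc}(\Omega)$, exploiting two facts: Lipschitz functions belong to $\s^{p'}_{loc}$ for \emph{every} $p'\ge p_0$ (their local Lipschitz constant, hence their weak upper gradient, is bounded, so it lies in every $L^{p'}_{loc}$ by local finiteness of the doubling measure $\mm$), and the objects $D^\pm f(\nabla g)$ obey the locality principle \eqref{eq:LocDfDg'} and do not depend on the Sobolev exponent used to define them (Theorem \ref{thm:weakgradp} together with the discussion after Definition \ref{def:DfDg}). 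Since the identity to be proved is $\mm$-a.e. and local in nature, I would first argue that it suffices to treat $f,g\in W^{1,p_0}(X,\sfd,\mm)$. Fixing a Lipschitz cut-off $\chi:X\to[0,1]$ compactly supported in $\Omega$ and the truncations $f^M:=\max(\min(f,M),-M)$ and $g^M$ defined analogously, the functions $\chi f^M,\chi g^M$ are bounded and compactly supported, hence in $L^{p_0}$, and by \eqref{eq:composition} (giving $\weakgrad{f^M}\le\weakgrad f$) and \eqref{eq:Algebra} they lie in $W^{1,p_0}(X,\sfd,\mm)$; they coincide with $f,g$ on $\{\chi=1\}\cap\{|f|\le M\}\cap\{|g|\le M\}$, so by \eqref{eq:LocDfDg'} the equality for the pair $(\chi f^M,\chi g^M)$ transfers to $(f,g)$ on these sets, which exhaust $\Omega$ up to an $\mm$-null set as $\chi\uparrow 1$ along an exhaustion of $\Omega$ and $M\uparrow\infty$.

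So assume $f,g\in W^{1,p_0}(X,\sfd,\mm)$. I would then invoke Theorem \ref{thm:approxLip} with exponent $p_0$ to produce sequences of Lipschitz functions $(f_n),(g_n)$ with $\mm(\{f_n\ne f\})\to 0$ and $\mm(\{g_n\ne g\})\to 0$. Being Lipschitz, each $f_n,g_n$ belongs to $\s^{p}_{loc}(\Omega)$, so the hypothesis of $q$-infinitesimal strict convexity, in the pointwise form \eqref{eq:InfStConvPointOmega}, applies to the pair $(f_n,g_n)$ and gives $D^+f_n(\nabla g_n)=D^-f_n(\nabla g_n)$ $\mm$-a.e. on $\Omega$. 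On the set $A_n:=\{f_n=f\}\cap\{g_n=g\}$ the locality property \eqref{eq:LocDfDg'} yields $D^\pm f(\nabla g)=D^\pm f_n(\nabla g_n)$ $\mm$-a.e., whence $D^+f(\nabla g)=D^-f(\nabla g)$ $\mm$-a.e. on $A_n$. Since $\mm(\Omega\setminus A_n)\le \mm(\{f_n\ne f\})+\mm(\{g_n\ne g\})\to 0$, the union $\bigcup_n A_n$ exhausts $\Omega$ up to an $\mm$-negligible set, and the claimed equality follows $\mm$-a.e. on $\Omega$. Recalling the equivalence between \eqref{eq:InfStConv} and \eqref{eq:InfStConvPoint}, this is exactly the statement that $(X,\sfd,\mm)$ is $q_0$-infinitesimally strictly convex.

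The step I expect to demand the most care is the initial reduction rather than the transfer argument itself. One must verify that $\chi f^M$ genuinely lands in $W^{1,p_0}(X,\sfd,\mm)$ — using $\weakgrad{(\chi f^M)}\le \chi\weakgrad{f^M}+|f^M|\weakgrad\chi$ from \eqref{eq:Algebra}, where the first summand is dominated by $\weakgrad f\in L^{p_0}_{loc}$ restricted to the compact set $\supp\chi$ and the second is bounded and compactly supported — and that the agreement sets $\{\chi=1\}\cap\{|f|\le M\}\cap\{|g|\le M\}$ truly cover $\Omega$ modulo $\mm$-null sets; this last point is where one uses that $f,g$ are real-valued (finite $\mm$-a.e.) and that $\Omega$ is the increasing union of the sets $\{\chi=1\}$ over a suitable family of cut-offs. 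By contrast, the heart of the matter — transporting the equality $D^+=D^-$ from Lipschitz functions to arbitrary $\s^{p_0}_{loc}(\Omega)$ functions — is a direct consequence of the cited locality property \eqref{eq:LocDfDg'} and the exponent-independence of $D^\pm f(\nabla g)$, and introduces no new analytic input beyond the quoted approximation theorem.
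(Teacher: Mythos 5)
Your proof is correct and follows essentially the same route as the paper: reduce to $W^{1,p_0}(X,\sfd,\mm)$ by truncation and cut-off, approximate by Lipschitz functions via Theorem \ref{thm:approxLip} with control on $\mm(\{f_n\neq f\})$, apply the pointwise identity \eqref{eq:InfStConvPoint} to the Lipschitz pair (which lies in $\s^p_{loc}$), and transfer the equality back through the locality property \eqref{eq:LocDfDg'}. The only difference is that you spell out the initial reduction, which the paper dispatches in one sentence.
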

\begin{proof}
With a truncation and cut-off argument and by the local nature of the statement we can assume that $f,g\in W^{1,p_0}(X,\sfd,\mm)$. By the approximation result given in  Theorem \ref{thm:approxLip} we can find two sequences $(f_n)$, $(g_n)$ of Lipschitz functions converging to $f,g$ in the $W^{1,p_0}$-norm and such that
\[
\begin{split}
\lim_{n\to\infty}\mm(\{f_n\neq f\})&=0,\\
\lim_{n\to\infty}\mm(\{g_n\neq g\})&=0.
\end{split}
\]
Since $f_n,g_n$ are Lipschitz, they belong to $\s^p_{loc}(X,\sfd,\mm)$ and therefore by the identity \eqref{eq:InfStConvPoint}  we know that
\begin{equation}
\label{eq:caffeino}
D^+f_n(\nabla g_n)=D^-f_n(\nabla g_n),\qquad\mm\text{-a.e.}.
\end{equation}
By the locality property \eqref{eq:LocDfDg'}  we have
\[
D^+f(\nabla g)=D^+f_n(\nabla g_n),\qquad D^-f(\nabla g)=D^-f_n(\nabla g_n),\qquad\mm\textrm{-a.e.\ on } \{f_n=f\}\cap\{g_n=g\},
\]
and the conclusion follows letting $n\to\infty$ in \eqref{eq:caffeino}.
\end{proof}
Due to this proposition, on spaces satisfying a $p_0$-Poincar\'e inequality when dealing with infinitesimal strict convexity will directly assume that the space is $q_0$-infinitesimally strictly convex. In this case, the common value of $D^+f(\nabla g)$ and $D^-f(\nabla g)$ will be denoted by $Df(\nabla g)$.
\begin{remark}[Calculus rules in the infinitesimally strictly convex case]\label{rem:ISC}{\rm
Let $(X,\sfd,\mm)$ be as in \eqref{eq:mms}, supporting a $p_0$-Poincar\'e inequality, $p_0>1$, and  $q_0$-infinitesimally strictly convex space, $q_0$ being the conjugate exponent of $p_0$. Then  for any $g \in \s^{p_0}_{loc}(\Omega)$ the map 
$$\s^{p_0}_{loc}(\Omega)\ni f \mapsto Df (\nabla g), $$
is linear $\mm$-a.e., i.e.
\begin{equation}
\label{eq:lindif}
D(\alpha_1 f_1+\alpha_2 f_2)(\nabla g)= \alpha_1 Df_1(\nabla g)+\alpha_2 Df_2(\nabla g) \quad \mm\text{-a.e. on }\Omega,
\end{equation}
for any $f_1,f_2 \in \s^p_{loc}(\Omega)$, $\alpha_1, \alpha_2 \in \R$.

Furthermore,   under the same assumptions of Propositions   \ref{Prop:ChainRule} and \ref{prop:Leibniz} the chain rules and the Leibniz rule read as
\begin{equation}
\label{eq:calcconv}
\begin{split}
D(\varphi\circ f)(\nabla g)&=\varphi'\circ f \; D f (\nabla g),  \\
D f (\nabla(\varphi\circ g))&= \varphi'\circ g \; D f (\nabla g), \\
D(f_1 f_2)(\nabla g)&= f_1 D f_2 (\nabla g)+ f_2 D f_1 (\nabla g),
\end{split}
\end{equation}
these equalities being intended $\mm$-a.e..
}\fr\end{remark} 
\begin{definition}[Infinitesimally Hilbertian spaces]\label{def:InfHilbert}
Let  $(X,\sfd,\mm)$ be as in \eqref{eq:mms}. We say that $(X,\sfd,\mm)$ is infinitesimally Hilbertian if the map
\[
\s^2(X,\sfd,\mm)\ni f\qquad\mapsto\qquad\int\weakgrad f^2\,\d\mm,
\] 
satisfies the parallelogram rule.  
\end{definition} 
The crucial property of infinitesimal Hilbertian spaces is that not only $D^+f(\nabla g)=D^-f(\nabla g)$, but that these expressions are also symmetric in $f,g$: this is the content of the next proposition, for the proof see Proposition 4.20 in \cite{Gigli12} (see also Section 4.3 in \cite{Ambrosio-Gigli-Savare11bis}).
\begin{proposition}\label{pro:InfHilbSymm}
Let $(X,\sfd,\mm)$ be as in \eqref{eq:mms}. Then it is an infinitesimally Hilbertian space if and only for every $\Omega\subset X$ open and $f,g \in \s^2_{loc}(\Omega)$ it holds
$$D^+f(\nabla g)=D^-f(\nabla g)=D^+g(\nabla f)=D^-g (\nabla f), \quad \mm\text{\rm-a.e.\ on } \Omega.$$
\end{proposition}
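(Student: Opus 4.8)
The plan is to prove both implications working throughout with the exponent $p=2$, so that $\weakgrad{g}^{p-2}=1$ and the two quantities $D^+f(\nabla g)$, $D^-f(\nabla g)$ are precisely the right and left derivatives at $\varepsilon=0$ of the $\mm$-a.e.\ convex map $\varepsilon\mapsto\weakgrad{(g+\varepsilon f)}^2$ (see \eqref{eq:convexity}). I will write $E(f):=\int\weakgrad f^2\,\d\mm$ for the energy appearing in Definition \ref{def:InfHilbert}. By the locality \eqref{eq:LocalityWG'} together with a truncation and cut-off argument (exactly as in the proof of Proposition \ref{prop:freddo}), every pointwise $\mm$-a.e.\ identity to be proved on a general $\Omega$ and for $f,g\in\s^2_{loc}(\Omega)$ reduces to the case $f,g\in W^{1,2}(X,\sfd,\mm)$, so I will argue for such functions.

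For the implication $(\Leftarrow)$ assume the four pointwise values coincide and denote the common value by $Df(\nabla g)=Dg(\nabla f)$. Single-valuedness is exactly $2$-infinitesimal strict convexity \eqref{eq:InfStConvPoint}, whence the linearity \eqref{eq:lindif} of $h\mapsto Dh(\nabla k)$ and the identity \eqref{eq:Dg2} are available. First I would show that for fixed $f,g$ the map $t\mapsto E(f+tg)$ is differentiable with $\tfrac{\d}{\dt}E(f+tg)=2\int Dg(\nabla(f+tg))\,\d\mm$: indeed, by convexity the incremental quotients $\tfrac{\weakgrad{(h+sg)}^2-\weakgrad{h}^2}{2s}$ decrease to $D^+g(\nabla h)$ as $s\downarrow0$ and increase to $D^-g(\nabla h)$ as $s\uparrow0$, they are dominated in $L^1$ by \eqref{Schwartz}, and the two one-sided limits agree by hypothesis. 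Using the symmetry hypothesis and then the linearity \eqref{eq:lindif} I rewrite $Dg(\nabla(f+tg))=D(f+tg)(\nabla g)=Df(\nabla g)+t\weakgrad g^2$, so that $\tfrac{\d}{\dt}E(f+tg)=2\int Df(\nabla g)\,\d\mm+2tE(g)$ is affine in $t$. Hence $t\mapsto E(f+tg)$ is a quadratic polynomial, and evaluating at $t=\pm1$ and summing yields $E(f+g)+E(f-g)=2E(f)+2E(g)$, i.e.\ infinitesimal Hilbertianity.

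For the implication $(\Rightarrow)$ assume $E$ satisfies the parallelogram rule. Since $E$ is nonnegative, $2$-homogeneous and convex by \eqref{eq:subAdd} (in particular $t\mapsto E(f+tg)$ is continuous), the Jordan--von Neumann argument shows $E$ is a quadratic form: the polarization $\mathcal B(f,g):=\tfrac14\big(E(f+g)-E(f-g)\big)$ is a symmetric continuous bilinear form with $\mathcal B(f,f)=E(f)$, and $E(f+tg)=E(f)+2t\,\mathcal B(f,g)+t^2E(g)$ for every $t\in\R$. Computing $\tfrac{\d}{\dt}E(f+tg)$ as before gives $\tfrac{\d}{\dt^+}E(f+tg)=2\int D^+g(\nabla(f+tg))\,\d\mm$ and $\tfrac{\d}{\dt^-}E(f+tg)=2\int D^-g(\nabla(f+tg))\,\d\mm$; but the left-hand side is the derivative of an exact polynomial, so its one-sided versions coincide. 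Thus the two integrals are equal, and combined with the pointwise inequality \eqref{eq:D-<D+} this forces $D^+g(\nabla f)=D^-g(\nabla f)$ $\mm$-a.e.\ (take $t=0$), so the space is $2$-infinitesimally strictly convex and we may write $Df(\nabla g)$. Evaluating the derivative at $t=0$ identifies $\int Df(\nabla g)\,\d\mm=\mathcal B(f,g)$, which by symmetry of $\mathcal B$ gives the integrated symmetry $\int Df(\nabla g)\,\d\mm=\int Dg(\nabla f)\,\d\mm$ for all $f,g$.

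The hard part is to upgrade this integrated symmetry to the pointwise identity $Df(\nabla g)=Dg(\nabla f)$ $\mm$-a.e. Testing against the chain rule (replacing $f,g$ by $\varphi\circ f,\psi\circ g$ and using \eqref{eq:ChainRulef}--\eqref{eq:ChainRuleg}) only yields $\int u(f)\,v(g)\,[Df(\nabla g)-Dg(\nabla f)]\,\d\mm=0$ for all bounded Borel $u,v$, i.e.\ vanishing of the conditional expectation of the defect with respect to the $\sigma$-algebra generated by $f$ and $g$, which is strictly weaker than pointwise vanishing; moreover there is no universal pointwise parallelogram inequality for the seminorms $\weakgrad{\cdot}(x)$ that one could combine with the integral identity. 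The route I would take instead is to introduce the \emph{manifestly symmetric} polarized density
\[
\langle\nabla f,\nabla g\rangle:=\tfrac12\big(\weakgrad{(f+g)}^2-\weakgrad f^2-\weakgrad g^2\big),
\]
and to prove that, under the global parallelogram rule, it is $\mm$-a.e.\ bilinear and coincides with $D^+f(\nabla g)=D^-f(\nabla g)$; equivalently, that for $\mm$-a.e.\ $x$ the pointwise map $\varepsilon\mapsto\weakgrad{(g+\varepsilon f)}^2$ is an exact quadratic polynomial (the pointwise parallelogram law). Once this is known the required symmetry is immediate from the very form of $\langle\nabla f,\nabla g\rangle$. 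This pointwise step is the genuine obstacle: it cannot be obtained from the integral identity by a soft averaging argument (a fixed non-Euclidean seminorm can perfectly well integrate to a Euclidean one), and its proof must exploit the locality \eqref{eq:LocalityWG'}, the $2$-homogeneity, and the one-sided derivative characterization of $D^\pm$ in order to localize the global parallelogram rule down to the level of the energy densities; this is the substance of the cited Proposition 4.20 in \cite{Gigli12}.
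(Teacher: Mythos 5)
Your $(\Leftarrow)$ direction is complete and correct, and the first two thirds of your $(\Rightarrow)$ direction are also sound: from the parallelogram rule you correctly deduce that $t\mapsto \int\weakgrad{(f+tg)}^2\,\d\mm$ is an exact quadratic polynomial, hence that $\int \big(D^+g(\nabla f)-D^-g(\nabla f)\big)\,\d\mm=0$, hence via \eqref{eq:D-<D+} that the space is $2$-infinitesimally strictly convex, and finally, by polarization, that $\int Df(\nabla g)\,\d\mm=\int Dg(\nabla f)\,\d\mm$ for all $f,g$. But the argument stops exactly where the proposition acquires its content: the passage from this integrated symmetry to the $\mm$-a.e.\ identity $Df(\nabla g)=Dg(\nabla f)$ is only described, not proved. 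Your diagnosis of why the soft routes fail is accurate --- testing against $\varphi\circ f$ and $\psi\circ g$ via the chain rules only kills the conditional expectation of the defect given $\sigma(f,g)$, and no universal pointwise parallelogram inequality is available for the densities $\weakgrad{\cdot}$ --- but identifying the obstacle is not the same as overcoming it. The missing step is precisely the localization of the parallelogram identity from the energy to the energy densities (equivalently, the $\mm$-a.e.\ bilinearity of the polarized density, or the statement that $\varepsilon\mapsto\weakgrad{(g+\varepsilon f)}^2$ is $\mm$-a.e.\ an exact quadratic polynomial), and this requires a genuine argument exploiting locality and the calculus rules, not an averaging one. As submitted, the forward implication is therefore not established.

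It is only fair to add that the paper itself does not prove this proposition either: it explicitly defers to Proposition 4.20 of \cite{Gigli12} (and Section 4.3 of \cite{Ambrosio-Gigli-Savare11bis}). So your write-up supplies everything short of the one step that genuinely requires the machinery of the cited reference; to turn it into an acceptable proof you must either carry out that pointwise localization or invoke the external result explicitly as the paper does, rather than leaving it as a described but unexecuted plan.
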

If the space supports a $p_0$-Poincar\'e inequality for some $p_0\in(1,2)$, the symmetry of $Df(\nabla g)$ is observed also for functions in $\s^p_{loc}(\Omega)$ for any $p\geq p_0$, as shown in the following simple statement
\begin{proposition}\label{prop:infhil}
Let $(X,\sfd,\mm)$ be as in \eqref{eq:mms} and supporting a $p_0$-Poincar\'e inequality \eqref{LPI} for some $p_0\in(1,2)$. Assume that $(X,\sfd,\mm)$ is infinitesimally Hilbertian. Then for every $p\geq p_0$, every open set $\Omega\subset X$ and every two functions $f,g\in \s^p_{loc}(\Omega)$ it holds
\[
D^+f(\nabla g)=D^-f(\nabla g)=D^+g(\nabla f)=D^-g (\nabla f), \quad \mm\text{\rm-a.e.\ on } \Omega.
\]
\end{proposition}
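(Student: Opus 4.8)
The plan is to deduce this from the symmetry already available for $\s^2_{loc}$ functions in Proposition \ref{pro:InfHilbSymm}, by the very same Lipschitz approximation scheme used in the proof of Proposition \ref{prop:freddo}. First I would record two structural facts: the objects $D^{\pm}f(\nabla g)$ are insensitive to the Sobolev exponent chosen in their definition, because $|Df|_{w,p}=|Df|_{w,p'}$ for all $p,p'\geq p_0$ by Theorem \ref{thm:weakgradp}; and $\s^p_{loc}(\Omega)\subset\s^{p_0}_{loc}(\Omega)$ for every $p\geq p_0$. Consequently it suffices to establish the four-fold equality for $f,g\in\s^{p_0}_{loc}(\Omega)$, working throughout with the $p_0$-exponent definitions. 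Observe that for $p\geq 2$ the claim is already contained in Proposition \ref{pro:InfHilbSymm}, since then $\s^p_{loc}(\Omega)\subset\s^2_{loc}(\Omega)$; the genuine content lies in the range $p_0\leq p<2$, which is exactly why the hypothesis $p_0<2$ is imposed.

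Next, exploiting the local character of the statement together with a standard truncation and cut-off argument, I would reduce to the case $f,g\in W^{1,p_0}(X,\sfd,\mm)$. Since $p_0>1$ and the space supports a $p_0$-Poincar\'e inequality, Theorem \ref{thm:approxLip} provides Lipschitz sequences $(f_n),(g_n)$ converging to $f,g$ in the $W^{1,p_0}$-norm, with $\mm(\{f_n\neq f\})\to 0$ and $\mm(\{g_n\neq g\})\to 0$, and with the nestings $\{f_{n+1}\neq f\}\subset\{f_n\neq f\}$ and $\{g_{n+1}\neq g\}\subset\{g_n\neq g\}$. The crucial point is that every Lipschitz function belongs to $\s^2_{loc}(X,\sfd,\mm)$: its local Lipschitz constant is a genuine (hence $2$-weak) upper gradient and is bounded, so it lies in $L^2_{loc}$ as $\mm$ is locally finite. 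Therefore Proposition \ref{pro:InfHilbSymm} applies to each pair $(f_n,g_n)$ and yields
\[
D^+f_n(\nabla g_n)=D^-f_n(\nabla g_n)=D^+g_n(\nabla f_n)=D^-g_n(\nabla f_n),\qquad\mm\text{-a.e.}
\]

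To finish, I would invoke the locality property \eqref{eq:LocDfDg'}, which gives $D^{\pm}f(\nabla g)=D^{\pm}f_n(\nabla g_n)$ and $D^{\pm}g(\nabla f)=D^{\pm}g_n(\nabla f_n)$ $\mm$-a.e.\ on $\{f_n=f\}\cap\{g_n=g\}$, and let $n\to\infty$. Because the exceptional sets are nested and have vanishing measure, $\mm$-a.e.\ point eventually belongs to $\{f_n=f\}\cap\{g_n=g\}$, so the displayed chain of equalities descends to the pair $(f,g)$. I expect the only delicate step to be the verification that the Lipschitz approximants really sit in $\s^2_{loc}$, so that the $p=2$ symmetry of Proposition \ref{pro:InfHilbSymm} is legitimately applicable; everything else is bookkeeping identical to Proposition \ref{prop:freddo}. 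Note in particular that, since the approximation is controlled only in the $W^{1,p_0}$-norm and the maps $(f,g)\mapsto D^{\pm}f(\nabla g)$ are not continuous in a way we wish to exploit, I would rely exclusively on the pointwise coincidence on $\{f_n=f\}\cap\{g_n=g\}$ rather than on any limiting argument at the level of the $D^{\pm}$ themselves.
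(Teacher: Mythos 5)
Your proposal is correct and follows exactly the route the paper takes: the paper's proof of this proposition is literally ``Same as the proof of Proposition \ref{prop:freddo}'', i.e.\ reduce by truncation and cut-off to $W^{1,p_0}(X,\sfd,\mm)$, approximate by Lipschitz functions via Theorem \ref{thm:approxLip}, apply the symmetry of Proposition \ref{pro:InfHilbSymm} to the (automatically $\s^2_{loc}$) Lipschitz approximants, and transfer the identity back through the locality property \eqref{eq:LocDfDg'} on $\{f_n=f\}\cap\{g_n=g\}$. Your preliminary remarks (independence of $D^{\pm}f(\nabla g)$ from the exponent via Theorem \ref{thm:weakgradp}, and the verification that Lipschitz functions lie in $\s^2_{loc}$) are exactly the implicit ingredients the paper relies on.
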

\begin{proof}
Same as the proof of Proposition \ref{prop:freddo}.
\end{proof}
To highlight the symmetry of the object $Df(\nabla g)$ on infinitesimally Hilbertian spaces, we will denote it by $\nabla f \cdot\nabla g$.
\begin{remark}[Calculus rules on infinitesimally Hilbertian spaces]{\rm
Let $(X,\sfd,\mm)$ be as in \eqref{eq:mms} supporting a $p_0$-Poincar\'e inequality, $p_0\in(1,2)$, and infinitesimally Hilbertian. Then the calculus rules simplify as follows. For $\Omega\subset X$ open and $f,g\in\s^{p_0}_{loc}(\Omega)$, the linearity in \eqref{eq:lindif} yields
\begin{equation}
\label{eq:lingrad}
\begin{split}
\nabla(\alpha_1 f_1+\alpha_2 f_2)\cdot\nabla g&= \alpha_1 \nabla f_1\cdot \nabla g+\alpha_2 \nabla f_2\cdot \nabla g \qquad \forall f_1,f_2,g\in \s^{p_0}_{loc}(\Omega),\ \alpha_1,\alpha_2\in\R\\
\nabla f\cdot \nabla (\beta_1 g_1+\beta_2 g_2)&= \beta_1 \nabla f\cdot \nabla g_1+\beta_2 \nabla f\cdot \nabla g_2 \qquad \forall f,g_1,g_2\in \s^{p_0}_{loc}(\Omega),\ \beta_1,\beta_2\in\R,
\end{split}
\end{equation}
and the Leibniz rule takes the form
\begin{equation}
\label{eq:leibhil}
\begin{split}
\nabla (f_1f_2)\cdot\nabla g&=f_1\nabla f_2\cdot\nabla g+f_2\nabla f_1\cdot\nabla g,\qquad \forall f_1,f_2\in \s^{p_0}_{loc}(\Omega)\cap L^\infty_{loc}(\Omega,\mm),\ g\in\s^{p_0}_{loc}(\Omega),\\
\nabla f \cdot\nabla (g_1g_2)&=g_1\nabla f\cdot\nabla g_2+g_2\nabla f\cdot\nabla g_1,\qquad \forall f\in\s^{p_0}_{loc}(\Omega),\ g_1,g_2\in \s^{p_0}_{loc}(\Omega)\cap L^\infty_{loc}(\Omega,\mm),
\end{split}
\end{equation}
these equalities being intended $\mm$-a.e. in $\Omega$.
}\fr\end{remark}
\section{The object $\bdiv(h\nabla g) $}

\subsection{Definition}
In this subsection we define the object, fundamental for this paper, $\bdiv(h\nabla g)$ for a Sobolev function $g$ and integrable function $h$. The definition is of distributional nature and directly generalizes the one of distributional Laplacian given in \cite{Gigli12}. First of all we define the set $\test\Omega$ of  test functions in $\Omega$. 
\begin{definition}[Test functions]\label{def:testFunct}
Let $(X,\sfd, \mm)$ be as in \eqref{eq:mms} and $\Omega\subset X$ an open subset. We denote by $\Test (\Omega)$ the set of Lipschitz functions on $X$ with support  compact and contained in $\Omega$.
\end{definition} 
Notice that if $(X,\sfd,\mm)$ is as in \eqref{eq:mms} supporting a $p_0$-Poincar\'e inequality, then for $p\geq p_0$ strictly greater than 1, $g\in\s^p_{loc}(\Omega)$ and $h\in L^q(\Omega)$, $q$ being the conjugate exponent of $p$, inequality \eqref{Schwartz} and the compactness of the support of $f$ ensure that $D^\pm f(\nabla g )h$ are in $L^1(\Omega)$.
\begin{definition}[Divergence]\label{def:divergence}
Let $(X,\sfd,\mm)$ be as in \eqref{eq:mms} supporting  a $p_0$-Poincar\'e inequality \eqref{LPI}, $\Omega \subset X$ an open subset, $p\geq p_0$ strictly greater than 1 and $q$ the conjugate exponent. Let  $h \in L^q_{loc}(\Omega)$ and $g \in \s^p_{loc}(\Omega)$. We say that $h \nabla g$ is in the domain of the divergence in $\Omega$, and write $h \nabla g \in D(\bdiv, \Omega)$, if there exists a Radon measure  $\mu$ on $\Omega$ such that for any $f \in \Test(\Omega)$  it holds
\begin{equation}\label{eq:defdiv}
-\int D^{\sign(h)}f(\nabla g)h\,\d\mm\leq \int f\,\d\mu\leq-\int D^{-\sign(h)}f(\nabla g)h\,\d\mm.
\end{equation} 
In this case we write $\mu \in \bdiv(h \nabla g)\restr\Omega$.
\end{definition}
\begin{remark}[Distributional Laplacian]{\rm
If we take $h\equiv 1$, the previous definition reduces to the definition of the Laplacian given in \cite{Gigli12} (Definition 4.4). In this case we write $g\in D(\bd,\Omega)$ in place of $\nabla g\in D(\bdiv,\Omega)$ and $\mu\in \bd g\restr\Omega$ in place of $\mu\in\bdiv(\nabla g)\restr\Omega$.
}\fr\end{remark}
Notice that the divergence operator is 1-homogeneous both in $h$ and $g$, in the sense that it holds
\[\left.\begin{array}{rl}
h\nabla g&\in D(\bdiv,\Omega),\\
\mu&\in\bdiv(h\nabla g)
\end{array}\right\}
\qquad\Rightarrow\qquad \left\{
\begin{array}{rl}
\alpha h\nabla g,\, h\nabla(\alpha g)&\in D(\bdiv,\Omega),\\
\alpha\mu&\in \bdiv(\alpha h\nabla g),\,\bdiv(h\nabla(\alpha g)),
\end{array}\right.
\] 
for any $\alpha\in \R$. This can be directly checked from identities \eqref{eq:sign} and the positive 1-homogeneity statements in Proposition \ref{prop:ConvHomog}. In general, linearity in $h$ is lost due to the presence of ${\rm sign}(h)$ in the definition, and can be recovered only on infinitesimally strictly convex spaces (Proposition \ref{prop:linh}). Linearity in $g$ is false in general (essentially because gradients do not linearly depends on functions on arbitrary Finsler manifolds) and can be recovered only on infinitesimally Hilbertian spaces, see Proposition \ref{prop:ling}.

Also, the definition naturally possesses the following global-to-local property:
\begin{equation}
\label{eq:globtoloc}
\left.\begin{array}{rl}
h\nabla g&\in D(\bdiv,\Omega),\\ \mu&\in\bdiv(h\nabla g)\restr\Omega,\\ \Omega'&\subset\Omega,
\end{array}
\right\}\qquad\qquad\Rightarrow\qquad \qquad
\left\{
\begin{array}{rl}
h\nabla g&\in D(\bdiv,\Omega'),\\
 \mu\restr{\Omega'}&\in\bdiv(h\nabla g)\restr{\Omega'},
 \end{array}\right.
\end{equation}
which follows from the fact that $\test{\Omega'}$ is contained in $\test\Omega$. It is natural to expect that, being the definition of divergence of distributional nature, also a converse local-to-global property holds. We are able to achieve this result only on infinitesimally strictly convex spaces, see Proposition \ref{pro:LocGlo} and Remark \ref{re:unclear} after it for a discussion about the additional difficulties in the general case.
\begin{remark}[Potential lack of uniqueness]\label{re:nonunique}{\rm
It is easy to produce examples where the set $\bdiv(h\nabla g)$ contains more than one measure. Consider for instance the space $(\R^d,\sfd_{\|\cdot\|},\mathcal L^d)$, where $\sfd_{\|\cdot\|}$ is the distance coming from a norm $\|\cdot\|$ and $\mathcal L^d$ the Lebesgue measure. For $p>1$ let  $E_p:L^2(\R^d,\mathcal L^d)\to[0,+\infty]$ be defined by
\[
E_p(f):=\left\{\begin{array}{ll}
\displaystyle{\tfrac1p\int_{\R^d}\|Df\|_*^p\,\d\mm},&\qquad\textrm{ if the distributional differential $Df$ of $f$ is in }L^p(\R^d,\mathcal L^d),\\
+\infty,&\qquad\textrm{ otherwise},
\end{array}\right.
\]
where $\|\cdot\|_*$ is the dual norm (hence a norm on the cotangent space - i.e.  the dual - of $\R^d$) of $\|\cdot\|$. This is  the natural generalization of the $p$-energy in the normed situation. 

It is readily checked from the definition that if $f\in L^2$ is an element of the subdifferential $\partial^-E_p(g)$ of $E_p$ at some $g\in L^2$, then $f\mathcal L^d\in \bdiv(\|Dg\|_*^{p-2}\nabla g)$ in the sense of Definition \ref{def:divergence}. Hence to give an example of non-unique divergence it is sufficient to produce a norm $\|\cdot\|$ and a function $g$ such that the subdifferential of $E_p$ at $g$ contains more than one point.

To this aim, just consider a non-strictly convex norm $\|\cdot\|$ and notice that in this case the dual norm $\|\cdot\|_*$ is not differentiable. Then pick any smooth $g$ such that  for a set of $x$ of positive measure $\|\cdot\|_*$ is not differentiable at $Dg(x)$. A direct application of the definition shows that the subdifferential $\partial^-E_p(g)$ of $E_p$ at $g$ contains more than one function.
}\fr\end{remark}
\begin{remark}[Uniqueness on infinitesimally strictly convex spaces]\label{re:unique}{\rm
With the same notations and assumptions of Definition \eqref{def:divergence}, if we further assume that $(X,\sfd,\mm)$ is $q$-infinitesimally strictly convex we get that $\bdiv(h\nabla g)$ contains at most one measure $\mu$. Indeed in this case the chain of inequalities \eqref{eq:defdiv} reduces to
\[
-\int_\Omega Df(\nabla g)h\,\d\mm=\int f\,\d\mu.
\]
}\fr\end{remark}

\subsection{Calculus rules}\label{se:calcrule}
In this section we collect the basic calculus rules for the divergence. We start with the following chain rule.
\begin{proposition}[Chain rule for gradients]
Let $(X,\sfd,\mm)$ be as in \eqref{eq:mms} supporting a  $p_0$-Poincar\'e inequality \eqref{LPI}, $\Omega \subset X$ an open subset, $p\geq p_0$ strictly greater than 1 and $q$ the conjugate exponent. Let $h \in L^q_{loc}(\Omega)$, $g \in \s^p_{loc}(\Omega)$ and  let  $\varphi:\R\to \R$ be a Borel function such that for every compact subset $K \subset  \Omega$  there exists a closed  subset $I_K\subset \R$ where $\varphi$ is Lipschitz and  $\mm(g^{-1}(\R\setminus I_{K})\cap K)=0$. 

Then
\begin{equation}\label{eq:CalcRule1}
\bdiv(h\nabla(\varphi\circ g))=\bdiv(h(\varphi'\circ g) \,\nabla g),
\end{equation}
in the sense that one of  the two sides is not empty if and only if the other is,   and in this case the two sets of measures coincide.  
\end{proposition}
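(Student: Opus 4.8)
The plan is to reduce the statement to the pointwise chain rule \eqref{eq:ChainRuleg} and then to observe that, once that identity is inserted into Definition \ref{def:divergence}, the two defining chains of inequalities for $h\nabla(\varphi\circ g)$ and for $h(\varphi'\circ g)\nabla g$ become literally identical. Write $s:=\sign(h)$ and set $\tilde h:=h\,(\varphi'\circ g)$, so that $\sign(\tilde h)=s\,\sign(\varphi'\circ g)$ on $\{\tilde h\neq 0\}$. Fix $f\in\Test(\Omega)$. Granting the chain rule in the form
\[
D^{\pm s}f(\nabla(\varphi\circ g))=(\varphi'\circ g)\,D^{\pm s\,\sign(\varphi'\circ g)}f(\nabla g)\qquad\mm\text{-a.e.},
\]
I multiply by $h$ and integrate: on $\{\varphi'\circ g\neq 0,\ h\neq0\}$ the exponent $s\,\sign(\varphi'\circ g)$ equals $\sign(\tilde h)$ and $(\varphi'\circ g)h=\tilde h$, while on the complementary set both integrands vanish (either $h=0$ or $\tilde h=0$). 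Hence
\[
\int D^{\pm s}f(\nabla(\varphi\circ g))\,h\,\d\mm=\int D^{\pm\sign(\tilde h)}f(\nabla g)\,\tilde h\,\d\mm.
\]
Comparing with \eqref{eq:defdiv}, this shows that a Radon measure $\mu$ satisfies the divergence inequalities for $h\nabla(\varphi\circ g)$ if and only if it satisfies them for $\tilde h\nabla g=h(\varphi'\circ g)\nabla g$. Thus the two sets of admissible measures coincide, which is exactly \eqref{eq:CalcRule1} together with the claim that one side is non-empty iff the other is.

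The only genuine work is justifying the pointwise chain rule for the given $\varphi$, which is merely Borel and Lipschitz on the essential range of $g$, whereas Proposition \ref{Prop:ChainRule} requires a globally Lipschitz function; I would handle this by localization. Since every $f\in\Test(\Omega)$ has compact support $K\subset\Omega$ and, by \eqref{Schwartz}, $D^{\pm}f(\nabla(\,\cdot\,))$ vanishes $\mm$-a.e. outside $\supp f$, it suffices to prove the identity $\mm$-a.e. on $K$. Using the hypothesis, I pick a closed $I_K\subset\R$ with $\varphi|_{I_K}$ Lipschitz and $\mm(g^{-1}(\R\setminus I_K)\cap K)=0$, and let $\tilde\varphi:\R\to\R$ be a McShane Lipschitz extension of $\varphi|_{I_K}$. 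Then $\varphi\circ g=\tilde\varphi\circ g$ $\mm$-a.e. on $K$; in particular $\varphi\circ g\in\s^p_{loc}(\Omega)$ by \eqref{eq:composition} together with the invariance of Sobolev classes under $\mm$-a.e. modification, and $\tilde h=h(\varphi'\circ g)\in L^q_{loc}(\Omega)$ because $\varphi'\circ g$ is bounded on $K$. By the locality relations \eqref{eq:LocDfDg} and \eqref{eq:LocDfDg'} one gets $D^{\pm}f(\nabla(\varphi\circ g))=D^{\pm}f(\nabla(\tilde\varphi\circ g))$ $\mm$-a.e. on $K$, and applying \eqref{eq:ChainRuleg} to the genuinely Lipschitz $\tilde\varphi$ yields the required pointwise identity on $K$.

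The main obstacle is precisely the null-set and sign bookkeeping in this last step: one must ensure that the convention ``the right-hand side of \eqref{eq:ChainRuleg} is taken $0$ where $\varphi$ is not differentiable at $g(x)$'' is compatible with passing to the extension $\tilde\varphi$, so that no discrepancy arises on $\partial I_K$ or at non-differentiability points. This is exactly what \eqref{eq:LocDfDg} is for: wherever $\tilde\varphi'$ and $\varphi'$ can disagree, namely on an $\mathcal L^1$-null subset of $\R$, the factor $D^{\pm}f(\nabla g)$ accompanying $\varphi'\circ g$ is forced to vanish, so the products coincide $\mm$-a.e. on $K$ regardless of the chosen extension. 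Beyond making \eqref{eq:ChainRuleg} available, no Poincar\'e or infinitesimal-convexity input is needed; the proposition is essentially a faithful transcription of the chain rule for $D^{\pm}$ into the language of the divergence.
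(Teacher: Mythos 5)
Your proof is correct and follows essentially the same route as the paper's: localize $\varphi$ to a genuinely Lipschitz function via the sets $I_K$ (the paper works on open $\Omega'$ with compact closure rather than via an explicit McShane extension, but the locality input is identical), apply the pointwise chain rule \eqref{eq:ChainRuleg}, multiply by $h$, integrate, and observe that the sign conventions make the two chains of inequalities in Definition \ref{def:divergence} coincide term by term. Your explicit bookkeeping of $\sign(h\varphi'\circ g)=\sign(h)\sign(\varphi'\circ g)$ and of the null sets via \eqref{eq:LocDfDg} is exactly what the paper's ``the result follows directly from Definition \ref{def:divergence}'' is silently invoking.
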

\begin{proof} Let $\Omega'\subset \Omega$ be an open set with compact closure contained in $\Omega$. By the assumption on $\varphi$ we know that for $\mm$-a.e. $x\in\Omega'$ it holds $g(x)\in I_{\bar\Omega'}$ and that $\varphi\restr{I_{\bar\Omega'}}$ is Lipschitz. It follows that $\varphi\circ g\in \s^p_{loc}(\Omega)$ and $\varphi'\circ g\in L^\infty_{loc}(\Omega)$. In particular, the couples of functions $(h,\varphi\circ g)$ and $(h\varphi'\circ g,g)$ satisfy the requirements asked in Definition \ref{def:divergence} and the statement makes sense.
 
Now fix  $f\in \Test(\Omega)$ and notice that the discussion just done ensures that  the chain rule \eqref{eq:ChainRuleg} is applicable to get
\[
\begin{split}
\int_\Omega D^\pm f(\nabla(\varphi\circ g))h\,\d\mm=\int_\Omega D^{\pm{\rm{sign }}(\varphi'\circ g)}f(\nabla g)\varphi'\circ g\,h\,\d\mm.
\end{split}
\]
Thus  the result follows directly from Definition \ref{def:divergence}.
\end{proof}
We now turn to the Leibniz rule for differentials. In the standard Euclidean setting it holds
\[
\div(h_1h_2\nabla g)=\nabla h_1\cdot\nabla g\,h_2+h_1\div(h_2\nabla g),
\]
and if $\R^d$ is endowed with a strictly convex norm the formula becomes
\begin{equation}
\label{eq:lettino}
\div(h_1h_2\nabla g)= Dh_1(\nabla g)\,h_2+h_1\div(h_2\nabla g).
\end{equation}
Notice that while the formula always makes sense if the functions are smooth, when we turn to the distributional notion of divergence and to  Sobolev/integrable functions, some care is needed: indeed, the term $h_1\div(h_2\nabla g)$, being the product of a Sobolev function and of a measure, in general makes no sense. In order for it to be well defined we need to assume either that $h_1$ is continuous or that the measure $\div(h_2\nabla g)$ is absolutely continuous w.r.t. Lebesgue. 

Hence the same sort of assumptions are needed in the abstract setting, and in order to prove the non-smooth analogous of \eqref{eq:lettino} we need a couple of approximation lemmas that show that the higher is the regularity of $g$ and $h$ or of $\bdiv(g \nabla h)$, the wider is the class of functions for which the integration by part rules holds.  
\begin{lemma}[Continuous and Sobolev test functions]\label{lem:IntParthg}
Let $(X,\sfd,\mm)$ be as in \eqref{eq:mms} supporting  a $p_0$-Poincar\'e inequality \eqref{LPI}, $p_0>1$, $\Omega \subset X$ an open subset and $r\geq p_0$. 

Then for every $\psi\in \s^r(X,\sfd, \mm) \cap C_c(\Omega)$ there exists a sequence $(\psi_n)\subset\test\Omega$ of uniformly bounded Lipschitz functions such that $\cup_n\supp(\psi_n)$ is compact and contained in $\Omega$, $\psi_n(x)\to\psi(x)$ for every $x\in X$ and $\|\weakgrad{(\psi-\psi_n)}\|_{L^r(X)}\to 0$ as $n\to\infty$.

In particular, the following holds. Let  $p,q,r>1$ be such that
\begin{equation}\label{hp:pqr1}
\frac{1}{p}+\frac{1}{q}+\frac{1}{r}=1, \quad p,r\geq p_0,
\end{equation}
and let $h \in L^q_{loc}(\Omega)$, $g \in \s^p_{loc}(\Omega)$ such that $h\nabla g \in D(\bdiv, \Omega)$.

Then for every $\psi \in \s^r(X,\sfd, \mm) \cap C_c(\Omega)$ and any $\mu\in \bdiv(h\nabla g)\restr\Omega$ it holds  
\[
-\int_\Omega D^{{\rm sign}(h)} \psi (\nabla g)h\,\d\mm\leq \int_\Omega \psi \,\d\mu\leq-\int_\Omega D^{-{\rm sign}(h)} \psi (\nabla g)h\,\d\mm. 
\]
\end{lemma}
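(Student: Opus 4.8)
The plan is to first establish the approximation statement, and then to deduce the integration-by-parts inequalities by passing to the limit in the defining inequalities \eqref{eq:defdiv}. For the construction of the sequence $(\psi_n)$, I would invoke Theorem \ref{thm:approxLip} directly. Since $\psi\in\s^r(X,\sfd,\mm)\cap C_c(\Omega)$ has compact support contained in $\Omega$ and $\psi\in L^r(X,\mm)$ (being continuous with compact support), we have $\psi\in W^{1,r}(X,\sfd,\mm)$; the theorem then yields Lipschitz functions $\psi_n$ with $\|\psi-\psi_n\|_{W^{1,r}}\to 0$, and its final clause guarantees that $\cup_n\supp(\psi_n)$ is compact and contained in $\Omega$, so that $\psi_n\in\test\Omega$. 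In particular $\|\weakgrad{(\psi-\psi_n)}\|_{L^r(X)}\to 0$. The only points requiring slight care are pointwise convergence and uniform boundedness. For uniform boundedness I would truncate: replacing $\psi_n$ by $\max(\min(\psi_n,M),-M)$ with $M:=\|\psi\|_{L^\infty}$ does not increase the weak gradient (by \eqref{eq:composition} with a $1$-Lipschitz $\varphi$) and keeps $W^{1,r}$-convergence, so we may assume $\|\psi_n\|_{L^\infty}\leq M$. For pointwise convergence I would use the structural information $\{\psi_{n+1}\neq\psi\}\subset\{\psi_n\neq\psi\}$ from Theorem \ref{thm:approxLip}, which forces each $x$ to eventually satisfy $\psi_n(x)=\psi(x)$, giving $\psi_n(x)\to\psi(x)$ for every $x$.

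With the sequence in hand, I would pass to the limit. Fix $\mu\in\bdiv(h\nabla g)\restr\Omega$. Applying \eqref{eq:defdiv} to each $\psi_n\in\test\Omega$ gives
\[
-\int_\Omega D^{\sign(h)}\psi_n(\nabla g)\,h\,\d\mm\leq\int_\Omega\psi_n\,\d\mu\leq-\int_\Omega D^{-\sign(h)}\psi_n(\nabla g)\,h\,\d\mm.
\]
The middle term converges to $\int_\Omega\psi\,\d\mu$ by dominated convergence: $\psi_n\to\psi$ pointwise, the $\psi_n$ are uniformly bounded by $M$, and all are supported in a single compact set $K\subset\Omega$ on which $|\mu|$ is finite (as $\mu$ is Radon), so $M\,\mathbbm 1_K\in L^1(|\mu|)$ is a dominating function.

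For the two outer terms I would use the $1$-Lipschitz property \eqref{eq:unolip} of the map $f\mapsto D^\pm f(\nabla g)$. It gives, $\mm$-a.e.,
\[
\bigl|D^{\pm\sign(h)}\psi_n(\nabla g)-D^{\pm\sign(h)}\psi(\nabla g)\bigr|\leq\weakgrad{(\psi_n-\psi)}\,\weakgrad g,
\]
so that, using H\"older's inequality with the three exponents from \eqref{hp:pqr1} on the fixed compact set $K$,
\[
\int_K\bigl|D^{\pm\sign(h)}\psi_n(\nabla g)-D^{\pm\sign(h)}\psi(\nabla g)\bigr|\,|h|\,\d\mm\leq\|\weakgrad{(\psi_n-\psi)}\|_{L^r(K)}\|\weakgrad g\|_{L^p(K)}\|h\|_{L^q(K)}\to 0.
\]
Here $\weakgrad g\in L^p_{loc}$ and $h\in L^q_{loc}$ are finite on $K$, and $\|\weakgrad{(\psi_n-\psi)}\|_{L^r(K)}\to 0$ by construction; the condition $\tfrac1p+\tfrac1q+\tfrac1r=1$ is exactly what makes this triple H\"older estimate work. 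Thus both outer integrals converge to their counterparts with $\psi$, and passing to the limit in the displayed chain of inequalities yields the claim.

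The main obstacle I anticipate is the careful bookkeeping in the outer-term estimate rather than any conceptual difficulty: one must ensure that $D^{\pm\sign(h)}\psi(\nabla g)$ is well-defined for the limit function $\psi\in\s^r(\Omega)$ (which holds since $r\geq p_0$ and weak gradients are $p$-independent by Theorem \ref{thm:weakgradp}, so $D^\pm$ does not depend on the chosen exponent), and that the triple H\"older estimate is set up with matching local integrability exponents on a fixed compact support. Once the approximation lemma is correctly stated with uniform boundedness and common compact support, the limiting argument is routine.
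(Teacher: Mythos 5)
There is a genuine gap in your construction of the approximating sequence, precisely at the point you treat as routine: pointwise convergence \emph{at every} $x$. The nesting $\{\psi_{n+1}\neq\psi\}\subset\{\psi_n\neq\psi\}$ together with $\mm(\{\psi_n\neq\psi\})\to 0$ only shows that the set where $\psi_n(x)$ fails to be eventually equal to $\psi(x)$, namely $\cap_n\{\psi_n\neq\psi\}$, is $\mm$-negligible; it does not show it is empty. So your argument yields $\psi_n\to\psi$ only $\mm$-a.e., not everywhere. This matters because the measure $\mu\in\bdiv(h\nabla g)\restr\Omega$ is a general Radon measure which need not be absolutely continuous with respect to $\mm$ (the whole point of the distributional definition, cf.\ Remark \ref{re:nonunique}), so an $\mm$-null exceptional set can carry positive $|\mu|$-mass and the dominated convergence step $\int\psi_n\,\d\mu\to\int\psi\,\d\mu$ collapses. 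This is exactly the difficulty the paper's proof is built to circumvent: it sandwiches the Lipschitz approximations between the inf- and sup-convolutions $\psi^{t,+}(x)=\inf_y\psi(y)+\sfd^2(x,y)/(2t)$ and $\psi^{t,-}(x)=\sup_y\psi(y)-\sfd^2(x,y)/(2t)$, which are Lipschitz, equibounded and converge to $\psi$ \emph{at every point} since $\psi$ is continuous and bounded; setting $\psi_{n,t}=\min\{\max\{\psi_n,\psi^{t,+}\},\psi^{t,-}\}$ and using the locality property \eqref{eq:LocalityWG'} to control $\int\weakgrad{(\psi_{n,t}-\psi)}^r\,\d\mm$, a diagonal choice $t_n\downarrow 0$ produces a sequence with everywhere pointwise convergence, uniform bounds, and $L^r$-convergence of the weak gradients; a final multiplication by a cutoff secures the common compact support.

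The remainder of your argument is sound and coincides with the paper's: once everywhere pointwise convergence and uniform boundedness on a fixed compact set are available, the middle term converges by dominated convergence, and the outer terms converge by combining the $1$-Lipschitz estimate \eqref{eq:unolip} with the triple H\"older inequality for the exponents $p,q,r$ of \eqref{hp:pqr1}. Your truncation trick for uniform boundedness is also fine as far as it goes; the missing ingredient is solely the everywhere (as opposed to $\mm$-a.e.) convergence, and it cannot be obtained from Theorem \ref{thm:approxLip} alone.
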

\begin{proof}
Since $\mm(\supp (\psi))<\infty$ and $\psi$ is bounded, we have $\psi \in L^r(X,\mm)$ and therefore it belongs to  $ W^{1,r}(X,\sfd,\mm)$. From Theorem \ref{thm:approxLip} we know that there exists a sequence $(\psi_n)\subset W^{1,r}(X,\sfd,\mm)$ of Lipschitz functions converging to $\psi$ in $W^{1,r}(X,\sfd,\mm)$ such that
\[
\lim_{n\to\infty}\mm(\{\psi_{n}\neq \psi\})\to 0.
\] 
Define $\psi^{t,+}, \psi^{t,-}:X \to \R$ by
$$
\psi^{t,+}(x):= \inf_y \psi(y)+\frac{\sfd^2(x,y)}{2 t}, \qquad\qquad \psi^{t,-}(x):=\sup_y \psi(y)-\frac{\sfd^2(x,y)}{2t}.
$$
It is easy to check that, since $\psi \in C_b(X)$, the functions $\psi^{t,+}$ and $\psi^{t,-}$ are Lipschitz, equibounded, and it holds $\psi^{t,+}(x) \uparrow \psi(x)$,  $\psi^{t,-}(x) \downarrow \psi(x)$ as $t\downarrow 0$ for any $x \in X$ (see for instance Chapter 3 of \cite{Ambrosio-Gigli-Savare11}).

Let $\psi_{n,t}:=\min\{\max\{\psi_n, \psi^{t,+}\},\psi^{t,-}\}$ and observe that the $\psi_{n,t}$'s are Lipschitz, uniformly  bounded in $n$ and $t$, and they pointwise converge to $\psi$ as $t \to 0$ uniformly on $n$. Let $E_{n,t}:=\{\psi_{n,t}\neq\psi\}$ and notice that $E_{n,t}\subset \{\psi_n\neq \psi\}$ for any $n,t$ and thus $\lim_{n\to\infty}\mm(E_{n,t})=0$ for any $t>0$. Moreover, for any $x\in X$ it holds either $\psi_{n,t}(x)=\psi_n(x)$ or $\psi_{n,t}(x)=\psi^{t,+}(x)$ or $\psi_{n,t}(x)=\psi^{t,-}(x)$, hence from the locality property \eqref{eq:LocalityWG'}  and putting $L_{t,+}:=\Lip(\psi^{t,+})$, $L_{t,-}:=\Lip(\psi^{t,-})$ we get
\[
\begin{split}
&\int_X \weakgrad{(\psi_{n,t}-\psi)}^r\,\d\mm=\int_{E_{n,t}} \weakgrad{(\psi_{n,t}-\psi)}^r\,\d\mm\\
&\leq\int_{E_{n,t}}\weakgrad{(\psi_{n}-\psi)}^r\,\d\mm+\int_{E_{n,t}}\weakgrad{(\psi^{t,+}-\psi)}^r\,\d\mm+\int_{E_{n,t}}\weakgrad{(\psi^{t,-}-\psi)}^r\,\d\mm\\
&\leq\int_{X}\weakgrad{(\psi_{n}-\psi)}^r\,\d\mm+\int_{E_{n,t}}\big(L_{t,+}+\weakgrad\psi\big)^r\,\d\mm+\int_{E_{n,t}}\big(L_{t,-}+\weakgrad\psi\big)^r\,\d\mm,
\end{split}
\]
and therefore $\int_X \weakgrad{(\psi_{n,t}-\psi)}^r\,\d\mm\to 0$ as $n\to\infty$ for any $t>0$ (the first term goes to 0 because of the $W^{1,r}$-convergence of $(\psi_n)$ to $\psi$, the other two by the absolute continuity of the integral). Hence with a diagonalization argument we can find a sequence $t_n\downarrow0$ such that $\lim_{n\to\infty}\int_X \weakgrad{(\psi_{n,t_n}-\psi)}^r\,\d\mm=0$. By construction it also holds $\lim_{n\to\infty}\psi_{n,t_n}(x)=\psi(x)$ and $\sup_{n\in\N}|\psi_{n,t_n}|(x)<\infty$ for any $x\in X$. Hence $(\psi_{n,t_n})$ has all the desired properties except possibly the fact that $\cup_n\supp(\psi_{n,t_n})$ is a compact subset of $\Omega$. To get also this, just replace $\psi_{n,t_n}$ with $\nchi\psi_{n,t_n}$, where $\nchi\in\test\Omega$ is any function identically 1 on $\supp(\psi)$. It is immediate to check that this new sequence has all the desired properties.

The second part of the statement is a simple consequence of the first, indeed for $\psi\in\s^r(X,\sfd,\mm)\cap C_c(\Omega)$, let $(\psi_n)\subset\test\Omega$ be as given by the first part of the statement  and notice that by definition of $\mu$ it holds
\begin{equation}
\label{eq:divano}
-\int_\Omega D^{{\rm sign}(h)} \psi_n (\nabla g)h\,\d\mm\leq \int_\Omega \psi_n \,\d\mu\leq-\int_\Omega D^{-{\rm sign}(h)} \psi_n (\nabla g)h\,\d\mm,\qquad\forall n\in\N.
\end{equation}
Now conclude observing that the pointwise convergence of $\psi_n$ to $\psi$ and the fact that the $\psi_n$'s are uniformly bounded grant, via the dominated convergence theorem, that
\[
\lim_{n\to\infty}\int_\Omega\psi_n\,\d\mu=\int_\Omega\psi\,\d\mu,
\]
and that the validity of $\lim_{n\to\infty}\int_X\weakgrad{(\psi_n-\psi)}^r\,\d\mm=0$ together with \eqref{eq:unolip} give
\[
\lim_{n\to\infty}\int_\Omega D^{\pm\sign(h)}\psi_n(\nabla g)h\,\d\mm=\int_\Omega D^{\pm\sign(h)}\psi(\nabla g)h\,\d\mm.
\]
Thus we can pass to the limit in \eqref{eq:divano} and get the thesis.
\end{proof}

\begin{remark}\label{Rem:IntPartDelta}{\rm Taking $h\equiv 1$ in Lemma \ref{lem:IntParthg}, under the same assumptions on $(X,\sfd,\mm)$,  we get that if $p,r\geq p_0$,  are conjugate exponents, $g \in \s^p_{loc}(\Omega)\cap D(\bd, \Omega)$, then  for every $\psi \in \s^r(X,\sfd, \mm) \cap C_c(\Omega)$ and $\mu\in\bd g\restr\Omega$ it holds  
$$-\int_\Omega D^{+} \psi (\nabla g)\,\d\mm\leq \int_\Omega \psi \,\d\mu\leq-\int_\Omega D^{-} \psi (\nabla g)\,\d\mm,$$
which generalizes Lemma 4.25 in \cite{Gigli12} to the $\s^p(\Omega)$ framework, under dubling\&Poincar\'e assumptions.
}\fr\end{remark}

\begin{lemma}[Absolutely continuous measures in $\bdiv(h\nabla g)$ and test functions in $W^{1,r}(\Omega)$]\label{lem:IntPartm}
Let $(X,\sfd,\mm)$ be as in \eqref{eq:mms}   supporting  a $p_0$-Poincar\'e inequality \eqref{LPI}, $p_0>1$ and  $\Omega \subset X$ an open subset. Let $p,q,r>1$ be such that
\begin{equation}\label{hp:pqr2}
\frac{1}{p}+\frac{1}{q}+\frac{1}{r}=1, \quad p,r\geq p_0.
\end{equation}
Consider $h \in L^q_{loc}(\Omega)$, $g \in \s^p_{loc}(\Omega)$ with $h\nabla g \in D(\bdiv, \Omega)$ and such that for  some $\mu \in \bdiv(h\nabla g)\restr\Omega$  we have $\mu\ll \mm$ with $\frac{\d\mu}{\d\mm}\in L^{r'}_{loc}(\Omega)$,  $r'$ being the  conjugate exponent of $r$.

Then for every $\psi \in W^{1,r}(X,\sfd, \mm)$  compactly  supported  in $\Omega$  it holds  
$$-\int_\Omega D^{\sign(h)} \psi (\nabla g)h\,\d\mm \leq \int_\Omega \psi \,\d\mu \leq-\int_\Omega D^{-\sign(h)} \psi (\nabla g)h\,\d\mm. $$
\end{lemma}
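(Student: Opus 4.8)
The plan is to approximate the $W^{1,r}$ test function $\psi$ by genuine test functions in $\test\Omega$ and to pass to the limit in the defining inequalities \eqref{eq:defdiv}, exactly in the spirit of the second half of Lemma \ref{lem:IntParthg}. The only substantive difference, and the reason the absolute continuity hypothesis on $\mu$ enters, is in how one controls the term $\int_\Omega \psi_n\,\d\mu$ along the approximation.

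First I would invoke Theorem \ref{thm:approxLip} with exponent $r$ (legitimate since $r\geq p_0$ and $r>1$) to produce a sequence $(\psi_n)$ of Lipschitz functions converging to $\psi$ in $W^{1,r}(X,\sfd,\mm)$ with $\cup_n\supp(\psi_n)$ compact and contained in $\Omega$; let $K\subset\Omega$ be a compact set containing $\supp\psi$ and all the $\supp(\psi_n)$. Each $\psi_n$ is then Lipschitz and compactly supported in $\Omega$, hence $\psi_n\in\test\Omega$, so by Definition \ref{def:divergence}
$$
-\int_\Omega D^{\sign(h)}\psi_n(\nabla g)\,h\,\d\mm \le \int_\Omega \psi_n\,\d\mu \le -\int_\Omega D^{-\sign(h)}\psi_n(\nabla g)\,h\,\d\mm
$$
for every $n$, and it remains only to pass to the limit in all three members.

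For the two outer terms I would use the $1$-Lipschitz estimate \eqref{eq:unolip}, which gives
$$
\Big|\int_\Omega\big(D^{\pm\sign(h)}\psi_n(\nabla g)-D^{\pm\sign(h)}\psi(\nabla g)\big)h\,\d\mm\Big| \le \int_K \weakgrad{(\psi_n-\psi)}\,\weakgrad g\,|h|\,\d\mm,
$$
and then the generalized Hölder inequality with the three conjugate exponents $r,p,q$ of \eqref{hp:pqr2} bounds the right-hand side by $\|\weakgrad{(\psi_n-\psi)}\|_{L^r(X)}\,\|\weakgrad g\|_{L^p(K)}\,\|h\|_{L^q(K)}$, which vanishes as $n\to\infty$ since $\psi_n\to\psi$ in $W^{1,r}$ while $\weakgrad g\in L^p(K)$ and $h\in L^q(K)$ by the local integrability assumptions (the finiteness of the limit integral follows likewise from \eqref{Schwartz}). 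For the middle term I would write $\int_\Omega\psi_n\,\d\mu=\int_K\psi_n\,\tfrac{\d\mu}{\d\mm}\,\d\mm$ using $\mu\ll\mm$; since $\psi_n\to\psi$ in $L^r$ (a consequence of $W^{1,r}$-convergence) and $\tfrac{\d\mu}{\d\mm}\in L^{r'}_{loc}(\Omega)\subset L^{r'}(K)$, Hölder with exponents $r,r'$ yields $\int_\Omega\psi_n\,\d\mu\to\int_\Omega\psi\,\d\mu$. Passing to the limit in the displayed chain of inequalities then gives the claim.

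The argument is essentially routine given Theorem \ref{thm:approxLip}; the one point that genuinely requires attention, and the place where the hypotheses $\mu\ll\mm$ and $\tfrac{\d\mu}{\d\mm}\in L^{r'}_{loc}$ are indispensable, is the convergence of $\int_\Omega\psi_n\,\d\mu$. In Lemma \ref{lem:IntParthg} this step was handled by dominated convergence, exploiting the \emph{pointwise} (and uniformly bounded) convergence available for continuous $\psi$; here $\psi$ is only Sobolev, so only $L^r$-convergence is at hand, and pairing it against $\mu$ forces $\mu$ to possess an $L^{r'}_{loc}$ density. This is precisely the trade-off anticipated before the statement: relaxing the test function from continuous to $W^{1,r}$ must be compensated by extra regularity of the divergence measure.
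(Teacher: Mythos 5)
Your proposal is correct and follows essentially the same route as the paper: approximate $\psi$ by Lipschitz functions via Theorem \ref{thm:approxLip} with exponent $r$ and compactly contained supports, then pass to the limit using the $1$-Lipschitz estimate \eqref{eq:unolip} (plus the three-exponent H\"older inequality) for the outer terms and the $L^r$--$L^{r'}$ pairing against the density of $\mu$ for the middle term. Your closing remark correctly identifies why the absolute continuity hypothesis replaces the pointwise/dominated-convergence argument of Lemma \ref{lem:IntParthg}; the paper's proof is just a terser version of exactly this.
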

\begin{proof}
By Theorem \ref{thm:approxLip} we know that there exists a sequence $(\psi_n)\subset W^{1,r}(X,\sfd,\mm)$ of Lipschitz functions converging to $\psi$ in $W^{1,r}(X,\sfd,\mm)$ and such that   $\cup_n\supp( \psi_n)$ is compact and contained in $\Omega$. 

Now  notice that the $L^r(\Omega)$ convergence of $\psi_n$ to $\psi$ yields $\lim_{n\to\infty}\int_\Omega\psi_n\,\d\mu=\int_\Omega\psi\,\d\mu$ and that the validity of $\lim_{n\to\infty}\int_X\weakgrad{(\psi_n-\psi)}^r\,\d\mm\to 0$ together with \eqref{eq:unolip} grants $\lim_{n\to\infty}\int_\Omega D^{\pm\sign(h)} \psi_n (\nabla g)h\,\d\mm=\int_\Omega D^{\pm\sign(h)} \psi (\nabla g)h\,\d\mm $. Hence we can pass to the limit in
\[
-\int_\Omega D^{\sign(h)} \psi_n (\nabla g)h\,\d\mm \leq \int_\Omega   \psi_n \,\d\mu\leq-\int_\Omega D^{-\sign(h)} \psi_n(\nabla g)h\,\d\mm,
\]
and conclude.
\end{proof}
\begin{remark}\label{Rem:IntPartDelta1}{\rm
Taking $h\equiv 1$ in Lemma \ref{lem:IntPartm}, under the same assumptions on $(X,\sfd,\mm)$,  we get that if $p,r\geq p_0$,   are conjugate exponents, $g \in \s^p_{loc}(\Omega)\cap D(\bDelta, \Omega)$ and such that for  some $\mu \in  \bDelta  g\restr\Omega$  we have $\mu\ll \mm$ with $\frac{\d\mu}{\d\mm}\in L^{p}_{loc}(\Omega, \mm\restr\Omega)$, then  for every  $\psi \in W^{1,r}(X,\sfd, \mm)$  compactly  supported  in $\Omega$  it holds  
$$-\int_\Omega D^{+} \psi (\nabla g)\,\d\mm \leq  \int_\Omega \psi\,\d\mu\leq-\int_\Omega D^{-} \psi (\nabla g)\,\d\mm, $$
which generalizes Lemma 4.26 in \cite{Gigli12} to the $\s^p(\Omega)$ framework under doubling\&Poincar\'e.
}\fr\end{remark}
In the proof of the Leibniz rule for the divergence we shall also need the following simple variant of  the Leibniz rules presented in inequality \eqref{eq:Algebra} and Proposition \ref{prop:Leibniz}.
\begin{lemma}\label{le:leiblip} Let $(X,\sfd,\mm)$ be as in \eqref{eq:mms} and supporting a $p_0$-Poincar\'e inequality, $\Omega\subset X$ an open set, $q\geq  p_0$ strictly greater than 1, $f_1\in W^{1,q}_{loc}(\Omega)$ and $f_2:\Omega\to\R$ locally Lipschitz. Then $f_1f_2\in W^{1,q}_{loc}(\Omega)$ and
\begin{equation}
\label{eq:leib2}
\weakgrad{(f_1f_2)}\leq |f_1|\weakgrad{f_2}+|f_2|\weakgrad{f_1},\qquad\mm-a.e.\textrm{ on }\Omega.
\end{equation}
Furthermore, for any $g\in \s^p_{loc}(\Omega)$, $p$ being the conjugate exponent of $q$ and assumed to be greater or equal to $p_0$, it holds 
\[
\begin{split}
D^+(f_1f_2)(\nabla g)&\leq f_1 D^{s_1} f_2(\nabla g)+f_2 D^{s_2} f_1(\nabla g),\\
D^-(f_1f_2)(\nabla g)&\geq f_1 D^{-s_1} f_2(\nabla g)+f_2 D^{-s_2} f_1(\nabla g),
\end{split}
\]
$\mm$-a.e. on $\Omega$, where $s_i$ is the sign of $f_i$, $i=1,2$.
\end{lemma}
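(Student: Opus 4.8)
The plan is to reduce the statement to the already established calculus rules---the algebra bound \eqref{eq:Algebra} and the Leibniz rule of Proposition \ref{prop:Leibniz}---both of which require the two factors to be \emph{locally bounded}. The only genuinely new feature here is that $f_1$ lies merely in $W^{1,q}_{loc}(\Omega)$ and need not be locally bounded, so the heart of the argument is a truncation of $f_1$ combined with the locality properties of the weak gradient and of $D^\pm\cdot(\nabla g)$. First I would set $f_1^N:=\varphi_N\circ f_1$, where $\varphi_N(t)=\max\{-N,\min\{N,t\}\}$ is the truncation at level $N$. Then $f_1^N\in W^{1,q}_{loc}(\Omega)\cap L^\infty(\Omega)$, and since $\varphi_N$ is $1$-Lipschitz with $\varphi_N'=\mathbbm 1_{(-N,N)}$, the chain rule \eqref{eq:composition} gives $\weakgrad{f_1^N}=\mathbbm 1_{\{|f_1|<N\}}\weakgrad{f_1}$ $\mm$-a.e.. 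Because $f_2$ is locally Lipschitz it lies in $\s^q_{loc}(\Omega)\cap L^\infty_{loc}(\Omega)$, so the pair $(f_1^N,f_2)$ is admissible for \eqref{eq:Algebra} and for Proposition \ref{prop:Leibniz}; note also that every function in play belongs to $\s^{p_0}_{loc}(\Omega)$, so the objects $D^\pm\cdot(\nabla g)$ are well defined and, being independent of the exponent, the Leibniz rule may be invoked at the exponent $p_0$.

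Next I would establish the Sobolev regularity and the gradient bound. The integrability $f_1f_2\in L^q_{loc}(\Omega)$ is immediate since $f_2$ is locally bounded and $f_1\in L^q_{loc}$; the same reasoning shows that $G:=|f_1|\weakgrad{f_2}+|f_2|\weakgrad{f_1}$ belongs to $L^q_{loc}(\Omega)$. For the bounded truncation, \eqref{eq:Algebra} yields $f_1^Nf_2\in\s^q_{loc}(\Omega)$ with $\weakgrad{(f_1^Nf_2)}\le G_N:=|f_1^N|\weakgrad{f_2}+|f_2|\weakgrad{f_1^N}$, and by the formula for $\weakgrad{f_1^N}$ one has $G_N\uparrow G$ monotonically. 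To pass to the limit I would localise: for a cut-off $\chi\in\test\Omega$, the functions $\chi f_1^Nf_2\to\chi f_1f_2$ $\mm$-a.e., and their weak upper gradients, dominated by $\chi G_N+|f_1^Nf_2|\weakgrad\chi$, converge in $L^q(X)$ to $\chi G+|f_1f_2|\weakgrad\chi$ (by monotone, resp.\ dominated, convergence). The lower semicontinuity \eqref{eq:lscw} then gives $\chi f_1f_2\in\s^q(X,\sfd,\mm)$, whence $f_1f_2\in W^{1,q}_{loc}(\Omega)$. The pointwise bound \eqref{eq:leib2} finally follows by restricting $\weakgrad{(f_1^Nf_2)}\le G_N$ to $\{|f_1|<N\}$, where $f_1^N=f_1$ and $\weakgrad{f_1^N}=\weakgrad{f_1}$ by \eqref{eq:LocalityWG'}, and letting $N\to\infty$, the sets $\{|f_1|<N\}$ exhausting $\Omega$ up to the $\mm$-null set where $f_1$ is infinite.

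For the two $D^\pm$ inequalities I would run the same truncation. Applying Proposition \ref{prop:Leibniz} to the bounded pair $(f_1^N,f_2)$ gives, at exponent $p_0$,
\[
D^+(f_1^Nf_2)(\nabla g)\le f_1^N\, D^{s_1}f_2(\nabla g)+f_2\, D^{s_2}f_1^N(\nabla g),
\]
and the corresponding lower bound for $D^-$, with $s_i=\sign f_i$; here it is important that truncation preserves the sign, so $\sign f_1^N=\sign f_1=s_1$ on $\{|f_1|<N\}$. On this same set $f_1^N=f_1$ and, by the locality \eqref{eq:LocDfDg'} of $D^\pm\cdot(\nabla g)$, both $D^+(f_1^Nf_2)(\nabla g)=D^+(f_1f_2)(\nabla g)$ and $D^{s_2}f_1^N(\nabla g)=D^{s_2}f_1(\nabla g)$; letting $N\to\infty$, the exhaustion by $\{|f_1|<N\}$ yields the claimed inequalities $\mm$-a.e.\ on $\Omega$.

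I expect the only delicate point to be the step establishing $f_1f_2\in\s^q_{loc}(\Omega)$, i.e.\ the passage to the limit in the truncation, where one must combine the monotone behaviour of $G_N$ with the lower semicontinuity \eqref{eq:lscw} after localising by a cut-off. Everything else is a direct transcription of the bounded case through the locality identities \eqref{eq:LocalityWG'} and \eqref{eq:LocDfDg'}, together with the exponent-independence that lets all the relevant $D^\pm$-objects be computed at the single exponent $p_0$.
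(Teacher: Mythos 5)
Your proof is correct and follows essentially the same route as the paper's: reduce to the locally bounded case via the truncations $f_1^N$, use the uniform $L^q_{loc}$ domination of $\weakgrad{(f_1^Nf_2)}$ together with the lower semicontinuity \eqref{eq:lscw} to get $f_1f_2\in\s^q_{loc}(\Omega)$, and obtain both \eqref{eq:leib2} and the $D^\pm$ inequalities by the locality properties \eqref{eq:LocalityWG'} and \eqref{eq:LocDfDg'} on the exhausting sets $\{|f_1|<N\}$. The only cosmetic difference is that you pass to the limit by monotone/dominated convergence of the dominating upper gradients, whereas the paper invokes \eqref{eq:lscw} from a uniform $L^q$ bound; both are fine.
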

\begin{proof}
If $f_1$ is locally bounded the thesis follows from  inequality \eqref{eq:Algebra} and Proposition \ref{prop:Leibniz}. For the general case truncate $f_1$ defining $f_1^N:=\min\{N,\max\{f,-N\}\}$ and notice that
\[
\weakgrad{(f_1^Nf_2)}\leq  |f_1^N|\weakgrad{f_2}+|f_2|\weakgrad{f_1^N}\leq |f_1||Df_2|+|f_2|\weakgrad{f_1},
\]
where $|Df_2|$ is the local Lipschitz constant of $f_2$, which is locally bounded by assumption. Therefore $\weakgrad{(f_1^Nf_2)}$ is locally uniformly bounded in $L^q$ and letting $N\to+\infty$ we deduce from \eqref{eq:lscw} that $f_1f_2\in \s^q(\Omega)$. 

The second part of the statement can be deduced by the same truncation argument using the local nature of the claim.
\end{proof}
We are now ready to prove the analogous of \eqref{eq:lettino} in metric measure spaces. Notice that for general $h_1,h_2,g$ we need to assume that the space is infinitesimally strictly convex: shortly said, this is needed because otherwise there would be a sign ambiguity in the term $D^\pm h_1(\nabla g)h_2$. In the particular case where $h_1$ is of the form $\varphi\circ g$ this ambiguity disappears because thanks to the chain rule \eqref{eq:ChainRuleg} and the identity \eqref{eq:Dg2} we know that $D^\pm(\varphi\circ g)(\nabla g)=\varphi'\circ g\weakgrad g^2$. This situation will be analyzed in Proposition \ref{prop:lettino} below.
\begin{proposition}[Leibniz rule for the divergence]\label{prop:divLap}
Let $(X,\sfd,\mm)$ be as in \eqref{eq:mms} and supporting a $p_0$-Poincar\'e inequality \eqref{LPI}, $p_0>1$. Assume furthermore that it is $q_0$-infinitesimally strictly convex, $q_0$ being the conjugate exponent of $p_0$, and let $\Omega \subset X$ be an open subset.

Let $p,q\geq p_0$ and $r\in[1,\infty]$ be such that $\frac1p+\frac1q+\frac1r=1$ and $g,h_1,h_2:\Omega\to\R$ Borel functions such that $g\in \s^p_{loc}(\Omega)$, $h_1\in W^{1,q}_{loc}(\Omega)$, $h_2\in L^r_{loc}(\Omega)$. Assume also that $h_2\nabla g\in D(\bdiv,\Omega)$ and recall that due to $q_0$-infinitesimal strict convexity there is only one measure $\mu$ in $\bdiv(h_2\nabla g)\restr\Omega$ (see Remark \ref{re:unique}).

Then the following holds.
\begin{itemize}
\item[i)] Assume that $h_1\in C(\Omega)$. Then $h_1h_2\nabla g\in D(\bdiv,\Omega)$ and the measure $\tilde\mu$ defined by
\begin{equation}
\label{eq:leibdiv}
\tilde\mu=D h_1( \nabla g)h_2\mm\restr\Omega+h_1\mu,
\end{equation}
is the only measure in $\bdiv(h_1h_2\nabla g)\restr\Omega$.
\item[ii)] Assume that $\mu\ll\mm$ with $\frac{\d\mu}{\d\mm}\in L^{q'}_{loc}(\Omega)$, $q'$ being the conjugate exponent of $q$. Then $h_1h_2\nabla g\in D(\bdiv,\Omega)$ and the measure $\tilde\mu$ defined by \eqref{eq:leibdiv} is the only measure in $\bdiv(h_1h_2\nabla g)\restr\Omega$.
\end{itemize}
\end{proposition}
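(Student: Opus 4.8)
The plan is to verify directly that the measure $\tilde\mu$ defined in \eqref{eq:leibdiv} satisfies the defining property of $\bdiv(h_1h_2\nabla g)\restr\Omega$, exploiting that on a $q_0$-infinitesimally strictly convex space the double inequality in Definition \ref{def:divergence} collapses to an equality (Remark \ref{re:unique}). First I would check that $\tilde\mu$ is a well-defined Radon measure in both cases. The term $Dh_1(\nabla g)h_2\,\mm\restr\Omega$ is always locally finite since, by \eqref{Schwartz}, $|Dh_1(\nabla g)\,h_2|\le\weakgrad{h_1}\,\weakgrad g\,|h_2|\in L^1_{loc}(\Omega)$ by the generalized H\"older inequality with exponents $\tfrac1p+\tfrac1q+\tfrac1r=1$; the term $h_1\mu$ is a product of a continuous function and a measure in case (i), and equals $h_1\frac{\d\mu}{\d\mm}\mm$ with $h_1\frac{\d\mu}{\d\mm}\in L^1_{loc}(\Omega)$ (H\"older with exponents $q,q'$) in case (ii). By Remark \ref{re:unique} it then suffices to prove the single equality
\[
\int_\Omega f\,\d\tilde\mu=-\int_\Omega Df(\nabla g)\,h_1h_2\,\d\mm,\qquad\forall f\in\Test(\Omega),
\]
and that same remark supplies the uniqueness of $\tilde\mu$ at no extra cost.

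The core idea is to feed the product $h_1 f$, rather than $f$ itself, as a test function into the divergence relation defining $\mu$. For $f\in\Test(\Omega)$ the function $h_1 f$ is compactly supported in $\Omega$, and by Lemma \ref{le:leiblip} it belongs to $W^{1,q}_{loc}(\Omega)$, hence to $W^{1,q}(X,\sfd,\mm)$. In case (i) it is moreover continuous, so $h_1 f\in\s^q(X,\sfd,\mm)\cap C_c(\Omega)$ and Lemma \ref{lem:IntParthg} applies to the triple $(h_2,g,h_1 f)$, the exponent condition \eqref{hp:pqr1} being exactly $\tfrac1p+\tfrac1r+\tfrac1q=1$ with $p,q\ge p_0$. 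In case (ii) the hypothesis $\frac{\d\mu}{\d\mm}\in L^{q'}_{loc}(\Omega)$ is precisely the density integrability demanded by Lemma \ref{lem:IntPartm}, which therefore applies to the test function $h_1 f\in W^{1,q}(X,\sfd,\mm)$. In either case, using $D^+=D^-=D$ on a $q_0$-infinitesimally strictly convex space, both lemmas reduce to
\[
\int_\Omega h_1 f\,\d\mu=-\int_\Omega D(h_1 f)(\nabla g)\,h_2\,\d\mm.
\]

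It then remains to expand $D(h_1 f)(\nabla g)$ by the Leibniz rule. Since $g\in\s^p_{loc}(\Omega)$ and $h_1\in W^{1,q}_{loc}(\Omega)$ with $p,q\ge p_0$, both functions lie in $\s^{\min(p,q)}_{loc}(\Omega)$, so the objects $D(h_1 f)(\nabla g)$, $Dh_1(\nabla g)$, $Df(\nabla g)$ are all well defined and independent of the Sobolev exponent; Lemma \ref{le:leiblip}, whose inequalities become the equality \eqref{eq:calcconv} on infinitesimally strictly convex spaces, then gives $D(h_1 f)(\nabla g)=h_1\,Df(\nabla g)+f\,Dh_1(\nabla g)$ $\mm$-a.e.. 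Substituting and rearranging yields
\[
-\int_\Omega Df(\nabla g)\,h_1h_2\,\d\mm=\int_\Omega h_1 f\,\d\mu+\int_\Omega f\,Dh_1(\nabla g)\,h_2\,\d\mm=\int_\Omega f\,\d\tilde\mu,
\]
which is exactly the identity sought; combined with the uniqueness from Remark \ref{re:unique}, this settles both (i) and (ii).

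The main obstacle is not the algebra but the admissibility bookkeeping: the product $h_1 f$ is only Sobolev (resp.\ continuous-and-Sobolev), never merely Lipschitz, so the bare Definition \ref{def:divergence} does not license it as a test function, and one must route through the two refined integration-by-parts Lemmas \ref{lem:IntParthg} and \ref{lem:IntPartm}. The split into cases (i) and (ii) mirrors exactly the two distinct sufficient conditions under which $h_1 f$ becomes admissible, namely extra continuity of $h_1$ or extra integrability of the density of $\mu$, and checking that the exponents match the hypotheses \eqref{hp:pqr1}, \eqref{hp:pqr2} of those lemmas is the only genuinely delicate point.
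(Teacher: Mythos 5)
Your proposal is correct and follows essentially the same route as the paper: test the divergence relation for $\mu$ against $h_1f$ via Lemma \ref{lem:IntParthg} in case (i) and Lemma \ref{lem:IntPartm} in case (ii), expand $D(h_1f)(\nabla g)$ by the Leibniz rule of Lemma \ref{le:leiblip} (which is an equality by infinitesimal strict convexity), and rearrange. Your additional remarks on the admissibility of $h_1f$ as a test function and on $\tilde\mu$ being a well-defined Radon measure match the paper's (more terse) treatment of the same points.
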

\begin{proof}$\ $\\
\noindent{$\mathbf{ (i)}$} Our assumptions ensure that the right hand side of \eqref{eq:leibdiv} is a Radon measure on $\Omega$, thus the statement makes sense. Pick $f\in\test\Omega$ and use the Leibniz rule in Lemma \ref{le:leiblip} to get
\[
\begin{split}
-\int_\Omega D f( \nabla g) h_1h_2\,\d\mm= -\int_\Omega  D(fh_1)(\nabla g)h_2 \,\d\mm+\int_\Omega D h_1( \nabla g)fh_2 \,\d\mm.
\end{split}
\]  
Since $fh_1\in \s^q(\Omega)\cap C_c(\Omega)$, by Lemma \ref{lem:IntParthg}  we deduce
\[
-\int_\Omega D (fh_1)(\nabla g)h_2\,\d\mm=\int fh_1\,\d\mu,
\]
and the thesis follows.

\noindent{$\mathbf{ (ii)}$} The assumption that $\mu$ is absolutely continuous w.r.t. $\mm$ with $L^{q'}_{loc}$ density together with the hypothesis  $h_1\in L^q_{loc}(\Omega)$ ensure that the rightmost term in \eqref{eq:leibdiv} is a well defined Radon measure. Since clearly $h_2D h_1( \nabla g)\in L^1_{loc}(\Omega)$, the right hand side 
of \eqref{eq:leibdiv} defines a Radon measure and the statement makes sense.

Now let $f\in\test\Omega$ be arbitrary, and apply the Leibniz rule in Lemma \ref{le:leiblip} to get
\[
\begin{split}
-\int_\Omega h_1h_2 D f( \nabla g)\,\d\mm= -\int_\Omega D(fh_1)(\nabla g)h_2\,\d\mm+\int_\Omega fh_2 Dh_1( \nabla g)\,\d\mm.
\end{split}
\]
Since $fh_1\in W^{1,q}(\Omega)$, we can apply Lemma \ref{lem:IntPartm} and get
\[
-\int_\Omega D(fh_1)( \nabla g)h_2\,\d\mm=\int fh_1\,\d\mu,
\]
which gives the thesis.
\end{proof}
\begin{proposition}[A Leibniz rule for divergence on non-inf. strictly convex spaces]\label{prop:lettino}
Let $(X,\sfd,\mm)$ be as in \eqref{eq:mms},  supporting  a $p_0$-Poincar\'e inequality \eqref{LPI}, $p_0>1$, and $\Omega \subset X$ an open subset.

Let $p\geq p_0$  and $g,h:\Omega\to\R$ Borel functions such that $g\in \s^p_{loc}(\Omega)$,  $h\in L^{\frac p{p-2}}_{loc}(\Omega)$ and $h\nabla g\in D(\bdiv,\Omega)$. Let also   $\varphi:\R\to \R$ be a Borel function such that for every compact subset $K \subset  \Omega$  there exists a closed  subset $I_K\subset \R$ where $\varphi$ is Lipschitz and   $\mm(g^{-1}(\R\setminus I_{K})\cap K)=0$. Let $\mu\in\bdiv(h\nabla g)\restr\Omega$.

Then the following holds.
\begin{itemize}
\item[i)] Assume that $g$ is continuous. Then $\varphi\circ g\, h\nabla g\in D(\bdiv,\Omega)$ and the measure $\tilde\mu$ defined by
\begin{equation}
\label{eq:leibdiv2}
\tilde\mu:=\varphi\circ g\mu+\varphi'\circ g\, h\weakgrad g^2\mm\restr\Omega,
\end{equation}
belongs to $\bdiv(\varphi\circ g\,h\nabla g)\restr\Omega$.
\item[ii)] Assume that for some $r\in[1,\infty]$ it holds $g\in L^r_{loc}(\Omega)$ and $\mu\ll\mm$ with $\frac{\d\mu}{\d\mm}\in L^{r'}_{loc}(\Omega)$, $r'$ being the conjugate exponent of $r$. Then $\varphi\circ g\, h\nabla g\in D(\bdiv,\Omega)$ and the measure $\tilde\mu$ defined by \eqref{eq:leibdiv2} belongs to $\bdiv(\varphi\circ g\,h\nabla g)\restr\Omega$.
\end{itemize}
\end{proposition}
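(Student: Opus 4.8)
The plan is to verify the defining inequalities \eqref{eq:defdiv} for $\tilde\mu$ and the field $\varphi\circ g\,h\,\nabla g$ by testing the known relation $\mu\in\bdiv(h\nabla g)\restr\Omega$ against the product $\psi:=f\,\varphi\circ g$, for an arbitrary $f\in\test\Omega$. First I would check that $\tilde\mu$ in \eqref{eq:leibdiv2} is a genuine Radon measure: the factor $\varphi'\circ g$ is bounded on the relevant range, and the standing hypothesis $h\in L^{\frac p{p-2}}_{loc}(\Omega)$ is tailored exactly so that $h\weakgrad{g}^2\in L^1_{loc}(\Omega)$ (since $\weakgrad{g}^2\in L^{p/2}_{loc}$ and $\tfrac{p-2}p+\tfrac2p=1$), while $\varphi\circ g\,\mu$ is Radon because $\varphi\circ g$ is continuous in case (i) and pairs with the $L^{r'}_{loc}$ density of $\mu$ in case (ii). The single algebraic fact that makes the statement hold \emph{without} infinitesimal strict convexity is that, by the chain rule \eqref{eq:ChainRulef} together with \eqref{eq:Dg2},
\[
D^+(\varphi\circ g)(\nabla g)=D^-(\varphi\circ g)(\nabla g)=\varphi'\circ g\,\weakgrad{g}^2,
\]
so this term is unambiguous even though in general $D^+\neq D^-$.

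Next I would apply the Leibniz inequalities of Lemma \ref{le:leiblip} to $\psi=(\varphi\circ g)\,f$, exactly as in the proof of Proposition \ref{prop:divLap}, obtaining
\[
\begin{split}
D^+\psi(\nabla g)&\le\varphi\circ g\,D^{\sign(\varphi\circ g)}f(\nabla g)+f\,\varphi'\circ g\,\weakgrad{g}^2,\\
D^-\psi(\nabla g)&\ge\varphi\circ g\,D^{-\sign(\varphi\circ g)}f(\nabla g)+f\,\varphi'\circ g\,\weakgrad{g}^2,
\end{split}
\]
where the last summand is the same on both lines by the displayed chain rule. Multiplying by $h$ and splitting $\Omega$ into $\{h>0\}$ and $\{h<0\}$ --- on the latter the inequality reverses because $h<0$, while the outer superscript $\sign(h)$ selects $D^-$ rather than $D^+$ --- I would verify that the two superscripts recombine into $\sign(h)\sign(\varphi\circ g)=\sign(\varphi\circ g\,h)$, producing $\mm$-a.e. on $\Omega$
\[
\begin{split}
D^{\sign(h)}\psi(\nabla g)\,h&\le\varphi\circ g\,h\,D^{\sign(\varphi\circ g\,h)}f(\nabla g)+f\,\varphi'\circ g\,\weakgrad{g}^2\,h,\\
D^{-\sign(h)}\psi(\nabla g)\,h&\ge\varphi\circ g\,h\,D^{-\sign(\varphi\circ g\,h)}f(\nabla g)+f\,\varphi'\circ g\,\weakgrad{g}^2\,h.
\end{split}
\]

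Finally I would feed $\psi$ into the appropriate extension of the integration-by-parts inequality: since $g$ is continuous in case (i), $\psi\in\s^p(X,\sfd,\mm)\cap C_c(\Omega)$ and Lemma \ref{lem:IntParthg} applies, while in case (ii) the absolute continuity of $\mu$ with the stated density integrability lets Lemma \ref{lem:IntPartm} apply to the compactly supported Sobolev function $\psi$; in both cases the exponents are arranged precisely so that $\psi$ is admissible and all products lie in $L^1_{loc}$. Either lemma gives $-\int D^{\sign(h)}\psi(\nabla g)\,h\,\d\mm\le\int\psi\,\d\mu\le-\int D^{-\sign(h)}\psi(\nabla g)\,h\,\d\mm$. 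Inserting the two pointwise bounds above and absorbing the common term $\int f\,\varphi'\circ g\,\weakgrad{g}^2\,h\,\d\mm$ through the identity $\int\psi\,\d\mu+\int f\,\varphi'\circ g\,\weakgrad{g}^2\,h\,\d\mm=\int f\,\d\tilde\mu$ produces exactly the chain \eqref{eq:defdiv} for $\tilde\mu$ and $\varphi\circ g\,h\,\nabla g$, i.e.\ $\tilde\mu\in\bdiv(\varphi\circ g\,h\,\nabla g)\restr\Omega$. The main obstacle is the middle step: because $h$ changes sign and $D^+\neq D^-$ in general, one must track carefully how the superscripts on $D^{\cdot}f(\nabla g)$ recombine under multiplication by $h$, and it is exactly the sign-free term $\varphi'\circ g\,\weakgrad{g}^2$ that lets the two ends of the chain line up with the required $\pm\sign(\varphi\circ g\,h)$ superscripts; matching the integrability exponents to Lemmas \ref{lem:IntParthg}--\ref{lem:IntPartm} is then a routine check.
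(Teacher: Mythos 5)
Your argument is correct and follows essentially the same route as the paper's proof: apply the Leibniz inequalities of Lemma \ref{le:leiblip} to $\psi=(\varphi\circ g)f$, use the chain rule \eqref{eq:ChainRulef} together with \eqref{eq:Dg2} to identify the sign-unambiguous term $\varphi'\circ g\,\weakgrad{g}^2$, and then integrate by parts via Lemma \ref{lem:IntParthg} in case (i) and Lemma \ref{lem:IntPartm} in case (ii). The only point the paper spells out that you pass over quickly is the verification that $\varphi\circ g$ is continuous in case (i) (via the doubling assumption forcing $\Omega'\setminus g^{-1}(I)$ to be empty), which is what makes $\psi$ admissible for Lemma \ref{lem:IntParthg} and $\varphi\circ g\,\mu$ a Radon measure.
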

\begin{proof}$\ $\\
\noindent{$\mathbf{(i)}$}. We claim that $\varphi\circ g$ is continuous. Indeed, for any open set $\Omega'\subset \Omega$ with compact closure in $\Omega$ there exists by assumption a closed set $I\subset \R$ such that $\varphi\restr I$ is Lipschitz and $\mm(\Omega'\setminus g^{-1}(I))=0$. Since $g$ is continuous, $g^{-1}(I)$ is closed and thus $\Omega'\setminus g^{-1}(I)$ is open. The doubling assumption ensures that all non-empty open sets have positive measure, thus it must hold $\Omega'\setminus g^{-1}(I)=\emptyset$ and the claim follows. 

In particular, the right hand side of \eqref{eq:leibdiv2} defines a Radon measure and the statement makes sense. Let $f\in\test\Omega$ and notice that the Leibniz formulas in Lemma \ref{le:leiblip} give
\begin{equation}
\label{eq:1}
D^{\sign(h)}(\varphi\circ g\,f)(\nabla g)h\leq D^{\sign(h\varphi\circ g)}f(\nabla g)h\varphi\circ g+D^{\sign(hf)}(\varphi\circ g)(\nabla g)hf,
\end{equation}
and the chain rule in \eqref{eq:ChainRulef} and the identity \eqref{eq:Dg2}  give $D^{\sign(hf)}(\varphi\circ g)(\nabla g)hf=\weakgrad g^2 \varphi'\circ g\,hf$. Therefore integrating \eqref{eq:1} and rearranging the terms we get
\[
-\int_\Omega D^{\sign(h\varphi\circ g)}f(\nabla g)h\varphi\circ g\,\d\mm\leq -\int_\Omega D^{\sign(h)}(\varphi\circ g\,f)(\nabla g)h\,\d\mm+\int_\Omega \weakgrad g^2 \varphi'\circ g\,hf\,\d\mm.
\]
We also have $\varphi\circ g\,f\in \s^p(X,\sfd,\mm)\cap C_c(\Omega)$, thus we can apply Lemma \ref{lem:IntParthg} and deduce
\[
-\int_\Omega D^{\sign(h\varphi\circ g)}f(\nabla g)h\varphi\circ g\,\d\mm\leq \int_\Omega \varphi\circ g\,f\,\d\mu+\int_\Omega \weakgrad g^2 \varphi'\circ g\,hf\,\d\mm.
\]
With similar arguments we obtain
\[
-\int_\Omega D^{-\sign(h\varphi\circ g)}f(\nabla g)h\varphi\circ g\,\d\mm\geq \int_\Omega \varphi\circ g\,f\,\d\mu+\int_\Omega \weakgrad g^2 \varphi'\circ g\,hf\,\d\mm,
\]
so the proof is complete.\\
\noindent{$\mathbf{(ii)}$}. The proof follows the same arguments just used, with the help of Lemma \ref{lem:IntPartm} instead of Lemma \ref{lem:IntParthg}. We omit the details.
\end{proof}
We now discuss a result about existence of the divergence and comparison estimate. The statement is analogous to the classical one valid in $\R^d$ `a distribution which has a sign is a measure'. The arguments for the proof closely follow those of Proposition 4.12 in \cite{Gigli12} for existence and comparison result for the Laplacian. 
\begin{proposition}[Existence of the divergence and comparison]\label{prop:comparison}
Let $(X,\sfd, \mm)$ be as in \eqref{eq:mms} supporting a $p_0$-Poincar\'e inequality \eqref{LPI}, $\Omega\subset X$ an open subset, $p,q>1$ conjugate exponents with $p\geq p_0$. Let $g\in \s^p_{loc}(\Omega)$, $h\in L^q_{loc}(\Omega)$ and assume  that there exists a Radon  measure $\tilde\mu$ on $\Omega$ such that for any  $f \in \test\Omega$ non negative  it holds
\begin{equation}\label{eq:comparison}
-\int_{\Omega} D^{-\sign(h)}f(\nabla g)h\,\d\mm\leq \int_\Omega f\,\d\tilde\mu.
\end{equation}
Then $h\nabla g\in D(\bdiv,\Omega)$ and for any $\mu\in\bdiv(h\nabla g)\restr\Omega$ it holds $\mu\leq\tilde\mu$.
\end{proposition}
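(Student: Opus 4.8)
The plan is to recast the two-sided bound in Definition~\ref{def:divergence} as domination by a single sublinear functional, and then to obtain the divergence measure by a Hahn--Banach selection upgraded to a measure via the positivity coming from the hypothesis. First I would introduce on the vector space $\test\Omega$ the functional
\[
P(f):=-\int_\Omega D^{-\sign(h)}f(\nabla g)\,h\,\d\mm .
\]
Splitting $\Omega$ into $\{h>0\}$, $\{h<0\}$ and $\{h=0\}$ and recalling from Proposition~\ref{prop:ConvHomog} that $f\mapsto D^-f(\nabla g)$ is $\mm$-a.e.\ concave while $f\mapsto D^+f(\nabla g)$ is $\mm$-a.e.\ convex (both positively $1$-homogeneous), the integrand $D^{-\sign(h)}f(\nabla g)\,h$ turns out to be $\mm$-a.e.\ concave in $f$ (concave times the positive weight $h$ on $\{h>0\}$, convex times the negative weight $h$ on $\{h<0\}$). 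Hence $P$ is convex and positively $1$-homogeneous, that is, sublinear. Using the sign rule \eqref{eq:sign} one checks the identity $-\int_\Omega D^{\sign(h)}f(\nabla g)\,h\,\d\mm=-P(-f)$, so that Definition~\ref{def:divergence} rephrases as: a Radon measure $\mu$ lies in $\bdiv(h\nabla g)\restr\Omega$ if and only if $\int_\Omega f\,\d\mu\le P(f)$ for every $f\in\test\Omega$ (the lower bound follows by applying this to $-f$).

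With this reformulation the comparison is immediate: if $\mu\in\bdiv(h\nabla g)\restr\Omega$ and $f\in\test\Omega$ is non-negative, then the hypothesis \eqref{eq:comparison} says exactly $P(f)\le\int_\Omega f\,\d\tilde\mu$, whence $\int_\Omega f\,\d\mu\le P(f)\le\int_\Omega f\,\d\tilde\mu$; as this holds for every non-negative test function we conclude $\mu\le\tilde\mu$.

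For existence I would apply the analytic Hahn--Banach theorem (starting from the trivial subspace) to the sublinear functional $P$, producing a linear functional $\ell:\test\Omega\to\R$ with $\ell\le P$; by linearity $\ell(f)=-\ell(-f)\ge-P(-f)$, so $\ell$ automatically obeys the two-sided bound. The point is to realise $\ell$ as integration against a Radon measure $\le\tilde\mu$. To do so I would set $m(f):=\int_\Omega f\,\d\tilde\mu-\ell(f)$, a linear functional, and note that for non-negative $f$ the hypothesis gives $\ell(f)\le P(f)\le\int_\Omega f\,\d\tilde\mu$, so $m$ is \emph{positive}. Positivity yields the local bound $|m(f)|\le m(\chi)\|f\|_\infty$ whenever $\supp(f)\subset\{\chi=1\}$ for a cutoff $\chi\in\test\Omega$; since compactly supported Lipschitz functions are dense in $C_c(\Omega)$ in the inductive-limit topology, $m$ extends to a positive linear functional on $C_c(\Omega)$ and Riesz--Markov represents it as $m(f)=\int_\Omega f\,\d\nu$ for a positive Radon measure $\nu$. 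Then $\mu:=\tilde\mu-\nu$ satisfies $\int_\Omega f\,\d\mu=\ell(f)\le P(f)$, so $\mu\in\bdiv(h\nabla g)\restr\Omega$, while $\nu\ge0$ gives $\mu\le\tilde\mu$.

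The step I expect to be delicate is precisely this last representation: Hahn--Banach delivers a linear functional with no intrinsic continuity, and the only mechanism turning it into a locally finite measure is the positivity of $m$, which rests entirely on the one-sided domination \eqref{eq:comparison}. Securing the local $\sup$-norm bound and the extension to $C_c(\Omega)$ (density of $\test\Omega$ and its lattice structure) is where the analytic care lies, whereas the concavity and sign bookkeeping of the first step, though it must be carried out attentively, is routine.
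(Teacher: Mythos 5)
Your proposal is correct and follows essentially the same route as the paper: the same sublinear functional $T(f)=-\int_\Omega D^{-\sign(h)}f(\nabla g)h\,\d\mm$ (your $P$), the same convexity/concavity bookkeeping on $\{h>0\}$ and $\{h<0\}$, Hahn--Banach to produce a dominated linear functional, and the positivity of $\int f\,\d\tilde\mu-\ell(f)$ to obtain the local sup-norm bound needed for the Riesz representation. The only cosmetic difference is that the paper represents $L$ directly as a signed Radon measure after bounding $|L(f)|$ via a cutoff, whereas you represent the positive functional $\tilde\mu-\ell$ and then subtract; the two are equivalent.
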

\begin{proof}
Combining assumption \eqref{eq:comparison} and  Definition \ref{def:divergence}, it is clear that if $\bdiv(h\nabla g)$ is not empty and $\mu \in \bdiv(h\nabla g)\restr\Omega$ then $\mu\leq \tilde{\mu}$, therefore in order to conclude it is enough to construct a measure $\mu \in \bdiv(h\nabla g)\restr\Omega$.

Let us consider the real valued map $\Test (\Omega)\ni f \mapsto T(f):=-\int_{\Omega} D^{-\sign(h)}f(\nabla g)h\,\d\mm$. A simple application of  Proposition \ref{prop:ConvHomog} shows that it satisfies
\begin{eqnarray}
T(\lambda f)&=&\lambda T(f), \qquad \qquad \forall f \in \Test(\Omega),\ \lambda \geq 0, \nonumber\\ \nonumber
T(f_1+f_2)&\leq& T(f_1)+T(f_2) \quad \forall f_1,f_2 \in \Test(\Omega),
\end{eqnarray}
(use the convexity of $f\mapsto D^+f(\nabla g)$ on $\{h>0\}$ and the concavity of $f\mapsto D^-f(\nabla g)$ on $\{h<0\}$).

Hence by the Hann-Banach Theorem there exists a linear map $L:\Test(\Omega)\to \R$ such that $L(f)\leq T(f)$ for any $f \in \Test(\Omega)$.  By \eqref{eq:sign}  we get
\begin{equation}\label{eq:Lf}
-\int_{\Omega} D^{\sign(h)}f(\nabla g)h\,\d\mm \leq L(f) \leq -\int_{\Omega} D^{-\sign(h)}f(\nabla g)h\,\d\mm, \quad \forall f \in \Test(\Omega).
\end{equation}
If we show that $L$ can be represented as an integral w.r.t some measure $\mu$ then the proof is complete. The  assumption \eqref{eq:comparison} together with \eqref{eq:Lf} implies that
\begin{equation}\label{eq:1provv}
\int_\Omega f \, \d \tilde{\mu}- L(f)\geq 0, \quad \forall f \in \test\Omega,\ f\geq 0. 
\end{equation}
Fix a compact subset $K\subset \Omega$ and a function $\nchi_K \in \Test (\Omega)$ with $0\leq \nchi_K \leq 1$ everywhere and $\nchi_K\equiv1$ on $K$. Let $V_K\subset \Test(\Omega)$ be the set of test functions supported in $K$ and observe that for any  $f\in V_K$, the function $(\max |f|)\nchi_K+f$ belongs to $\test\Omega$ and is non-negative. Thus \eqref{eq:1provv} yields
\[
\begin{split}
L(f)&=L((\max |f|)\nchi_K+f)-L((\max |f|)\nchi_K)\\
&\leq \int_\Omega (\max |f|)\nchi_K+f\,\d\tilde\mu-(\max |f|)L(\nchi_K)\leq (\max |f|)\Big(\int_\Omega\nchi_K\,\d\tilde\mu+\tilde\mu(K)-L(\nchi_K)\Big).
\end{split}
\]
Replacing $f$ with $-f$ we get
\[
|L(f)|\leq (\max |f|)\Big(\int_\Omega\nchi_K\,\d\tilde\mu+\tilde\mu(K)-L(\nchi_K)\Big),
\]
therefore $L:V_K\to \R$ is continuous w.r.t. the uniform norm. By the density of the Lipschitz functions in $\sup$ norm, the map $L$ can be therefore uniquely extended to a linear bounded operator on the space $C_K\subset C(X)$ of continuous functions supported in $K$. Since $K\subset  \Omega$ was an arbitrary compact subset, by the Riesz Theorem  we get that there exists a Radon measure $\mu$ on $\Omega$ representing $L$, i.e. such that
\begin{equation}\label{eq:riesz}
L(f)=\int_{\Omega} f \; \d \mu, \quad \forall  f \in \Test(\Omega).
\end{equation}
Combining \eqref{eq:Lf} and  \eqref{eq:riesz} we conclude that $h \nabla g \in D(\bdiv, \Omega)$ and $\mu \in \bdiv(h \nabla g)\restr\Omega$.
\end{proof}
\begin{remark}{\rm
In the proof of this existence result we used the Hahn-Banach theorem which in turn to be proved needs some form of Axiom of Choice. Yet, with the same assumptions and notations of the proposition, if we further assume that the space is $q$-infinitesimally strictly convex, $q$ being the conjugate exponent of $p$, we directly obtain that the map $T$ built in the proof is linear. Thus the argument goes on without any use of Choice. Our only application of Proposition \ref{prop:comparison} will be in Corollary \ref{cor:SupSolSIC}, where infinitesimal strict convexity will be assumed.
}\fr\end{remark}
Next we show that on infinitesimally strictly convex spaces the divergence is linear in $h$ and local.
\begin{proposition}[Linearity in $h$]\label{prop:linh}
Let $(X,\sfd, \mm)$ be as in \eqref{eq:mms}, supporting a $p_0$-Poincar\'e inequality, $p_0>1$, and assume also it is $q_0$-infinitesimally striclty convex, where $q_0$ is the conjugate exponent of $p_0$. Let  $p\geq p_0$, $\Omega\subset X$ an open subset $g\in\s^p_{loc}(\Omega)$ and $h_1,h_2\in L^q_{loc}(\Omega)$, $q$ being the conjugate exponent of $p$. 

Assume that $h_1\nabla g,h_2\nabla g\in D(\bdiv,\Omega)$ and denote by $\mu_1,\mu_2$ respectively the only measures in $\bdiv(h_1\nabla g)\restr\Omega,\bdiv(h_2\nabla g)\restr\Omega$ (see Remark \ref{re:unique}).

Then for every $\alpha_1,\alpha_2\in\R$ it holds $(\alpha_1h_1+\alpha_2h_2)\nabla g\in D(\bdiv,\Omega)$ and the measure $\alpha_1\mu_1+\alpha_2\mu_2$ is the only one in $\bdiv((\alpha_1h_1+\alpha_2h_2)\nabla g)\restr\Omega$
\end{proposition}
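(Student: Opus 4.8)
The plan is to exploit the fact that on a $q_0$-infinitesimally strictly convex space the two-sided inequality \eqref{eq:defdiv} defining the divergence collapses to a single linear identity, after which linearity in $h$ becomes a triviality coming from the linearity of the integral in its last slot. First I would set $h:=\alpha_1h_1+\alpha_2h_2$, which lies in $L^q_{loc}(\Omega)$ since this is a vector space, and $\mu:=\alpha_1\mu_1+\alpha_2\mu_2$, which is a signed Radon measure on $\Omega$ being a finite linear combination of such. The candidate is then $\mu\in\bdiv(h\nabla g)\restr\Omega$, and, the space being infinitesimally strictly convex, uniqueness of this measure will come for free from Remark \ref{re:unique}.

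Next I would record the crucial algebraic simplification. By hypothesis the space is $q_0$-infinitesimally strictly convex, so by \eqref{eq:InfStConvPointOmega} we have $D^+\phi(\nabla\psi)=D^-\phi(\nabla\psi)$ $\mm$-a.e.\ on $\Omega$ for all $\phi,\psi\in\s^{p_0}_{loc}(\Omega)$. Since $p\geq p_0$ forces $\s^p_{loc}(\Omega)\subseteq\s^{p_0}_{loc}(\Omega)$, and every $f\in\Test(\Omega)$ is Lipschitz, hence in $\s^{p_0}_{loc}(\Omega)$, the common value $Df(\nabla g)$ is well defined for every test function $f$. Together with the integrability $Df(\nabla g)\,h_i\in L^1(\Omega)$ for $f\in\Test(\Omega)$ (guaranteed by the compact support of $f$, the bound \eqref{Schwartz} and $h_i\in L^q_{loc}(\Omega)$, as noted before Definition \ref{def:divergence}), this lets me rewrite the defining relation for each $\mu_i$ as the equality
\[
\int_\Omega f\,\d\mu_i=-\int_\Omega Df(\nabla g)\,h_i\,\d\mm,\qquad\forall f\in\Test(\Omega),\ i=1,2,
\]
exactly as observed in Remark \ref{re:unique}.

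Finally I would combine the two identities: for arbitrary $f\in\Test(\Omega)$, linearity of the integral in the last argument gives
\[
\int_\Omega f\,\d\mu=\alpha_1\int_\Omega f\,\d\mu_1+\alpha_2\int_\Omega f\,\d\mu_2=-\int_\Omega Df(\nabla g)\,(\alpha_1h_1+\alpha_2h_2)\,\d\mm=-\int_\Omega Df(\nabla g)\,h\,\d\mm.
\]
Because $D^{\sign(h)}f(\nabla g)=D^{-\sign(h)}f(\nabla g)=Df(\nabla g)$ on the infinitesimally strictly convex space, this single equality is precisely the chain \eqref{eq:defdiv} for the pair $(h,g)$ and the measure $\mu$, so $h\nabla g\in D(\bdiv,\Omega)$ with $\mu\in\bdiv(h\nabla g)\restr\Omega$, and uniqueness is again Remark \ref{re:unique}. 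I do not expect a genuine obstacle: the only conceptual point is that linearity in $h$ truly requires infinitesimal strict convexity, since without it the superscript $\sign(h)$ in \eqref{eq:defdiv} cannot be related to $\sign(h_1)$ and $\sign(h_2)$, and it is exactly this dependence that the hypothesis removes.
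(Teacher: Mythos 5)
Your proof is correct and follows essentially the same route as the paper's: reduce the two-sided defining inequality \eqref{eq:defdiv} to the single equality $\int_\Omega f\,\d\mu_i=-\int_\Omega Df(\nabla g)h_i\,\d\mm$ via infinitesimal strict convexity, then combine the two identities by linearity of the integral in the multiplier $h$ and invoke Remark \ref{re:unique} for uniqueness. The only cosmetic difference is that the paper cites the linearity \eqref{eq:lindif} of $f\mapsto Df(\nabla g)$ at the splitting step, whereas you correctly observe that only linearity of the integral in $h$ is needed there; both arguments are the same computation.
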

\begin{proof} Just pick $f\in\test\Omega$ and recall the linearity in \eqref{eq:lindif} to get
\[
\begin{split}
-\int_\Omega Df(\nabla g)(\alpha_1h_1+\alpha_2h_2)\,\d\mm&=-\alpha_1\int_\Omega Df(\nabla g)h_1\,\d\mm-\alpha_2\int_\Omega Df(\nabla g)h_2\,\d\mm\\
&=\alpha_1\int_\Omega f\,\d\mu_1+\alpha_2\int_\Omega f\,\d\mu_2=\int_\Omega f\,\d(\alpha_1\mu_1+\alpha_2\mu_2),
\end{split}
\]
which is the thesis.
\end{proof}

\begin{proposition}[Local to Global]\label{pro:LocGlo}
Let $(X,\sfd, \mm)$ be as in \eqref{eq:mms}, supporting a $p_0$-Poincar\'e inequality \eqref{LPI}, $p_0>1$, and assume also it is $q_0$-infinitesimally strictly convex, where $q_0$ is the conjugate exponent of $p_0$. Let $\Omega\subset X$ be an open subset and $\{\Omega_i\}_{i \in I}$ a family of open subsets such that $\Omega=\cup_i \Omega_i$.

Let  $p \geq p_0$, $q$ the conjugate exponent,  $g\in \s^p_{loc}(\Omega)$, $h\in L^q_{loc}(\Omega)$ with $h\nabla g \in D(\bdiv,\Omega_i)$ for every $i \in I$. Denote by  $\mu_i$  the only element of $\bdiv(h \nabla g)\restr{\Omega_i}$ (see Remark \ref{re:unique}). Then
\begin{equation}\label{eq:mui=muj}
{\mu_i}_{|\Omega_i\cap \Omega_j}={\mu_j}_{|\Omega_i\cap \Omega_j} \quad \forall i,j \in I, 
\end{equation}
and $h \nabla g \in D(\bdiv, \Omega)$, where  the measure $\mu$ on $\Omega$ defined by 
\begin{equation}\label{eq:defmu}
\mu_{|\Omega_i}:=\mu_i, \quad \forall i \in I 
\end{equation}
is the only element of $\bdiv(h \nabla g)\restr\Omega$. 
\end{proposition}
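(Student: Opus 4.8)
The plan is to prove the three assertions in turn: the compatibility \eqref{eq:mui=muj}, the existence of a Radon measure $\mu$ satisfying \eqref{eq:defmu}, and the membership $\mu\in\bdiv(h\nabla g)\restr\Omega$; the asserted uniqueness is then immediate. Throughout I use that $q_0$-infinitesimal strict convexity, together with the inclusion $\s^p_{loc}(\Omega)\subset\s^{p_0}_{loc}(\Omega)$ and \eqref{eq:InfStConvPointOmega}, yields $D^+f(\nabla g)=D^-f(\nabla g)=:Df(\nabla g)$ for all $f,g\in\s^p_{loc}(\Omega)$ on every open subset, so that Remark \ref{re:unique} applies: $\bdiv(h\nabla g)$ is at most single-valued on each open set, and membership of a measure $\nu$ there is equivalent to $\int f\,\d\nu=-\int Df(\nabla g)\,h\,\d\mm$ for every test function $f$.

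For \eqref{eq:mui=muj}, fix $i,j$ and apply the global-to-local property \eqref{eq:globtoloc} with the open set $\Omega_i\cap\Omega_j$: both $\mu_i\restr{\Omega_i\cap\Omega_j}$ and $\mu_j\restr{\Omega_i\cap\Omega_j}$ lie in $\bdiv(h\nabla g)\restr{\Omega_i\cap\Omega_j}$, which by uniqueness contains a single measure, so they agree. To build $\mu$, I would first use that $X$ is second countable to extract a countable subcover, relabel it $\{\Omega_i\}_{i\in\N}$, disjointify by $A_i:=\Omega_i\setminus\bigcup_{j<i}\Omega_j$ (Borel, $A_i\subset\Omega_i$, $\bigcup_iA_i=\Omega$), and set $\mu(E):=\sum_i\mu_i(E\cap A_i)$. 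Compatibility \eqref{eq:mui=muj} then gives $\mu\restr{\Omega_j}=\mu_j$ for every $j$ (hence also for every member of the original family), while local finiteness holds because any relatively compact ball $B$ with $\bar B\subset\Omega_{i_0}$ satisfies $|\mu|(B)=|\mu_{i_0}|(B)<\infty$; thus $\mu$ is the Radon measure required by \eqref{eq:defmu}.

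The heart of the matter is to verify $\mu\in\bdiv(h\nabla g)\restr\Omega$, i.e.
\[
\int_\Omega f\,\d\mu=-\int_\Omega Df(\nabla g)\,h\,\d\mm,\qquad\forall f\in\test\Omega.
\]
Given such an $f$, its support $K$ is compact and contained in $\Omega$, so finitely many $\Omega_{i_1},\dots,\Omega_{i_n}$ cover $K$. I would choose a Lipschitz partition of unity $\{\chi_k\}_{k=1}^n$ subordinate to this finite cover — with $\chi_k\in\test{\Omega_{i_k}}$, $0\le\chi_k\le1$, and $\sum_k\chi_k\equiv1$ on a neighbourhood of $K$ (built from truncated distance functions) — and put $f_k:=f\chi_k\in\test{\Omega_{i_k}}$, so that $f=\sum_kf_k$ everywhere. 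Using $\mu\restr{\Omega_{i_k}}=\mu_{i_k}$ and the reduced identity of Remark \ref{re:unique}, each $\int f_k\,\d\mu=-\int_{\Omega_{i_k}}Df_k(\nabla g)\,h\,\d\mm$; since \eqref{Schwartz} and the locality of the weak gradient force $Df_k(\nabla g)=0$ off $\supp(f_k)\subset\Omega_{i_k}$, this equals $-\int_\Omega Df_k(\nabla g)\,h\,\d\mm$. Summing over $k$ and invoking the linearity \eqref{eq:lindif} of $f\mapsto Df(\nabla g)$, which gives $\sum_kDf_k(\nabla g)=Df(\nabla g)$ $\mm$-a.e., yields the displayed identity. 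Uniqueness of $\mu$ in $\bdiv(h\nabla g)\restr\Omega$ is once more Remark \ref{re:unique}.

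I expect the decisive step to be this last recombination of the partition of unity: it works only because $f\mapsto Df(\nabla g)$ is \emph{linear}, which is exactly the content of infinitesimal strict convexity. Without that hypothesis the maps $f\mapsto D^\pm f(\nabla g)$ are merely convex/concave and, through the weights $\sign(h)$ in \eqref{eq:defdiv}, the local two-sided inequalities cannot be added into a single global equality — precisely the obstruction discussed in Remark \ref{re:unclear}.
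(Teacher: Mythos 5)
Your proof is correct and follows essentially the same route as the paper's: compatibility of the $\mu_i$ via uniqueness on overlaps, gluing to define $\mu$, and a Lipschitz partition of unity combined with the linearity \eqref{eq:lindif} of $f\mapsto Df(\nabla g)$ supplied by infinitesimal strict convexity. Your treatment of the measure-theoretic gluing (countable subcover, disjointification, local finiteness) is more explicit than the paper's, which simply asserts that $\mu$ is well defined by \eqref{eq:defmu}.
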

 
\begin{proof}
Let $i,j \in I$ and $f \in \Test(\Omega_i\cap \Omega_j)$. Then by the very definition of $\bdiv(h\nabla g)$ together with the $q_0$-infinitesimal strict convexity we get
$$-\int_{\Omega_i\cap \Omega_j} f \, \d \mu_i = \int_{\Omega_i\cap \Omega_j} D f (\nabla g) h \, \d \mm = -\int_{\Omega_i\cap \Omega_j} f \, \d \mu_j$$
which gives \eqref{eq:mui=muj}. Notice that, in particular, the measure $\mu$ is well defined by \eqref{eq:defmu}.

Now fix $f \in \Test(\Omega)$. Since $\supp f $ is compact, there exists a finite subset $I_f\subset I$ of indexes such that $\supp f \subset \cup_{i \in I_f} \Omega_i$. The doubling assumption yields that closed bounded subsets of $X$ are compact and from this it is easy to see that we can build a family $\{\nchi_i\}_{i\in I_f}$ of Lipschitz  functions such that $\sum_{i \in I_f} \nchi_i \equiv 1$ on $\supp f$ and $\supp \nchi_i$ is compactly contained in $\Omega_i$ for any $i \in I_f$. Hence  $f \nchi_i \in \Test(\Omega_i)$ for any $i \in I_f$ and taking into account the linearity of the differential expressed in \eqref{eq:lindif}, we have
\begin{equation}
\label{eq:caffe}
\begin{split}
\int_{\Omega} Df(\nabla g) h \, \d \mm&=\int_{\Omega} D\Big(\sum_{i\in I_f} \nchi_i f\Big) (\nabla g) h \, \d \mm= \sum_{i\in I_f} \int_{\Omega_i} D( \nchi_i f) (\nabla g)  h \, \d \mm\\
&=- \sum_{i\in I_f} \int_{\Omega_i} \nchi_i f \, \d \mu_i =- \int_{\Omega} f \, \d \Big(\sum_{i\in I_f}\nchi_i \mu_i\Big)=- \int_{\Omega} f \, \d\mu,
\end{split}
\end{equation}
as desired. 
\end{proof}
\begin{remark}\label{re:unclear}{\rm
It is unclear to us whether an analogous of this statement holds dropping the assumption of infinitesimal strict convexity. We remark that the equality \eqref{eq:mui=muj} certainly can't hold in full generality, because the measures $\mu_i$ are not unique. Also, carrying over the same computations in \eqref{eq:caffe} lead to inequalities `with the wrong sign'. Actually, it is unclear to us whether any local-to-global property holds for the divergence on $\R^d$ endowed with the Lebesgue measure and a non-strictly convex norm. The need for infinitesimal strict convexity in this globalization result is what prevents us to prove the sheaf property of $p$-(sub/super)-minimizers in full generality, see Proposition \ref{prop:sheaf} and Remark \ref{re:mah}.
}\fr\end{remark}
We now show that on infinitesimally Hilbertian spaces the divergence is linear in $g$ and satisfies the Leibniz rule for gradients.
\begin{proposition}[Linearity in $g$]\label{prop:ling}
Let $(X,\sfd, \mm)$ be as in \eqref{eq:mms}, supporting a $p_0$-Poincar\'e inequality \eqref{LPI}, $p_0\in(1,2]$, and infinitesimally Hilbertian. Let $p\geq p_0$, $\Omega\subset X$ open $g_1,g_2\in\s^p_{loc}(\Omega)$ and $h\in L^q_{loc}(\Omega)$, $q$ being the conjugate exponent of $p$. 

Assume that $h\nabla g_1,h\nabla g_2\in D(\bdiv,\Omega)$ and denote by $\mu_1,\mu_2$ the only measures in $\bdiv(h\nabla g_1)\restr\Omega,\bdiv(h\nabla g_2)\restr\Omega$ respectively (see Remark \ref{re:unique}).

Then for every $\beta_1,\beta_2\in\R$ it holds $h\nabla(\beta_1g_1+\beta_2g_2)\in D(\bdiv,\Omega)$ and  the measure $\beta_1\mu_1+\beta_2\mu_2$ is the only measure in $\bdiv(h\nabla(\beta_1g_1+\beta_2g_2))\restr\Omega$.
\end{proposition}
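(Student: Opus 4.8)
The plan is to reduce linearity in the gradient slot to the already-established linearity \eqref{eq:lindif} in the differential slot, using the symmetry of the pairing $Df(\nabla g)$ that characterizes infinitesimally Hilbertian spaces. First I would record that the statement is meaningful and that uniqueness is automatic: since $\s^p_{loc}(\Omega)$ is a vector space by \eqref{eq:subAdd}, we have $\beta_1 g_1+\beta_2 g_2\in\s^p_{loc}(\Omega)$, so $h\nabla(\beta_1 g_1+\beta_2 g_2)$ is a legitimate candidate for $D(\bdiv,\Omega)$. Moreover, infinitesimal Hilbertianity forces $D^+f(\nabla g)=D^-f(\nabla g)$ for the relevant functions (Proposition \ref{prop:infhil} when $p_0\in(1,2)$, or Proposition \ref{pro:InfHilbSymm} when $p_0=2$, using $\s^p_{loc}(\Omega)\subset\s^2_{loc}(\Omega)$ for $p\geq 2$), i.e. the space is $q$-infinitesimally strictly convex; hence by Remark \ref{re:unique} the sets $\bdiv(h\nabla g_i)\restr\Omega$ are the singletons $\{\mu_i\}$ and any element of $\bdiv(h\nabla(\beta_1 g_1+\beta_2 g_2))\restr\Omega$ is unique. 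It therefore suffices to check that $\beta_1\mu_1+\beta_2\mu_2$ verifies the defining identity.

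The core computation I would carry out is the following. Fix $f\in\test\Omega$; being Lipschitz it lies in $\s^{p_0}_{loc}(\Omega)$, so the symmetry $Df(\nabla g_i)=Dg_i(\nabla f)$ applies to the pairs $(f,g_i)$ with $g_i\in\s^p_{loc}(\Omega)\subset\s^{p_0}_{loc}(\Omega)$. Combining this symmetry with the linearity \eqref{eq:lindif} in the first argument (equivalently, reading off the second line of \eqref{eq:lingrad}) gives, $\mm$-a.e.\ on $\Omega$,
\[
Df\big(\nabla(\beta_1 g_1+\beta_2 g_2)\big)=D(\beta_1 g_1+\beta_2 g_2)(\nabla f)=\beta_1\,Dg_1(\nabla f)+\beta_2\,Dg_2(\nabla f)=\beta_1\,Df(\nabla g_1)+\beta_2\,Df(\nabla g_2).
\]
Multiplying by $h$, integrating over $\Omega$, and invoking the defining relation of the divergence in its single-valued form (valid under infinitesimal strict convexity, Remark \ref{re:unique}) for $g_1$ and $g_2$ separately, I obtain
\[
-\int_\Omega Df\big(\nabla(\beta_1 g_1+\beta_2 g_2)\big)\,h\,\d\mm=\beta_1\int_\Omega f\,\d\mu_1+\beta_2\int_\Omega f\,\d\mu_2=\int_\Omega f\,\d(\beta_1\mu_1+\beta_2\mu_2).
\]
Since $f\in\test\Omega$ was arbitrary, this is exactly the statement $\beta_1\mu_1+\beta_2\mu_2\in\bdiv(h\nabla(\beta_1 g_1+\beta_2 g_2))\restr\Omega$, and the uniqueness noted above completes the argument.

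I do not expect a genuine obstacle here: the entire content is the symmetry of $Df(\nabla g)$, which is precisely the structural feature distinguishing infinitesimally Hilbertian spaces and which fails on merely infinitesimally strictly convex (e.g.\ general Finsler) spaces — this is exactly why linearity in $g$ cannot be expected outside the Hilbertian setting, in contrast to linearity in $h$ (Proposition \ref{prop:linh}). The only points demanding minor care are verifying that the admissible exponents let one invoke Proposition \ref{prop:infhil} (or Proposition \ref{pro:InfHilbSymm} at the endpoint $p_0=2$) for the pair $(f,g_i)$ with $f$ Lipschitz, and noting that the multivalued inequalities \eqref{eq:defdiv} collapse to the equalities used above thanks to infinitesimal strict convexity.
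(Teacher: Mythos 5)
Your proposal is correct and follows essentially the same route as the paper: the paper's proof is exactly the computation $-\int_\Omega \nabla f\cdot\nabla(\beta_1 g_1+\beta_2 g_2)\,h\,\d\mm=\beta_1\int f\,\d\mu_1+\beta_2\int f\,\d\mu_2$ via the linearity in $g$ recorded in \eqref{eq:lingrad}, which is itself obtained from symmetry plus \eqref{eq:lindif}, just as you re-derive it. Your extra care about the endpoint $p_0=2$ (via Proposition \ref{pro:InfHilbSymm} and the inclusion $\s^p_{loc}\subset\s^2_{loc}$) and about uniqueness is a harmless elaboration of what the paper leaves implicit.
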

\begin{proof}
It directly follows from the linearity in $g$ of $\nabla f\cdot\nabla g$ expressed in \eqref{eq:lingrad}. Indeed, fix $f\in\test\Omega$ and notice that
\[
\begin{split}
-\int_\Omega \nabla f\cdot\nabla (\beta_1g_1+\beta_2g_2)\,h\,\d\mm&=-\beta_1\int_\Omega\nabla f\cdot\nabla g_1\,h\,\d\mm-\beta_2\int_\Omega\nabla f\cdot\nabla g_2\,h\,\d\mm\\
&=\beta_1\int_\Omega f\,\d\mu_1+\beta_2\int_\Omega f\,\d\mu_2=\int f\,\d(\beta_1\mu_1+\beta_2\mu_2),
\end{split}
\]
which is the thesis.
\end{proof}
\begin{proposition}[Leibniz Rule for gradients]
Let $(X,\sfd, \mm)$ be as in \eqref{eq:mms}, supporting a $p_0$-Poincar\'e inequality \eqref{LPI}, $p_0\in(1,2)$, and infinitesimally Hilbertian. Let $p\geq p_0$, $\Omega\subset X$ open, $g_1,g_2\in\s^p_{loc}(\Omega)\cap L^\infty_{loc}(\Omega)$ and $h\in L^q_{loc}(\Omega)$, $q$ being the conjugate exponent of $p$. 

Assume that  $hg_1\nabla g_2,hg_2\nabla g_1  \in D(\bdiv, \Omega)$ and let $\mu_1,\mu_2$ be the only measures in $\bdiv(hg_1\nabla g_2)\restr\Omega,\bdiv(hg_2\nabla g_1)\restr\Omega$ respectively (see Remark \ref{re:unique}). 

Then $h\nabla(g_1g_2)\in D(\bdiv,\Omega)$ and the measure $\tilde\mu$ defined by
\[
\tilde\mu=\mu_1+\mu_2
\]
is the only measure in $\bdiv(h\nabla(g_1g_2))\restr\Omega$.
\end{proposition}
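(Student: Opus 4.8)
The plan is to mirror the proof of the linearity in $g$ (Proposition \ref{prop:ling}), replacing the use of the linearity \eqref{eq:lingrad} by the Leibniz rule for gradients \eqref{eq:leibhil}. First I would record that, since $(X,\sfd,\mm)$ is infinitesimally Hilbertian and supports a $p_0$-Poincar\'e inequality with $p_0\in(1,2)$, Proposition \ref{prop:infhil} guarantees $D^+f(\nabla g)=D^-f(\nabla g)=\nabla f\cdot\nabla g$ for all $f,g\in\s^p_{loc}(\Omega)$; in particular the space is $q_0$-infinitesimally strictly convex, so by Remark \ref{re:unique} each divergence appearing in the statement contains at most one measure and is characterized by an equality. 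Thus the hypotheses translate into
\[
\int_\Omega f\,\d\mu_1=-\int_\Omega \nabla f\cdot\nabla g_2\,(hg_1)\,\d\mm,\qquad \int_\Omega f\,\d\mu_2=-\int_\Omega \nabla f\cdot\nabla g_1\,(hg_2)\,\d\mm,
\]
for every $f\in\test\Omega$, where I have used that $hg_1,hg_2\in L^q_{loc}(\Omega)$ (as $h\in L^q_{loc}(\Omega)$ and $g_1,g_2\in L^\infty_{loc}(\Omega)$). I would also note that $g_1g_2\in\s^p_{loc}(\Omega)\cap L^\infty_{loc}(\Omega)$ by the algebra property \eqref{eq:Algebra}, so that $h\nabla(g_1g_2)$ fits the framework of Definition \ref{def:divergence}, and that $\tilde\mu=\mu_1+\mu_2$ is a Radon measure on $\Omega$; hence the statement is meaningful.

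The core step is the following computation. Fixing $f\in\test\Omega$, the Leibniz rule for gradients \eqref{eq:leibhil} gives, $\mm$-a.e. on $\Omega$,
\[
\nabla f\cdot\nabla(g_1g_2)=g_1\,\nabla f\cdot\nabla g_2+g_2\,\nabla f\cdot\nabla g_1.
\]
Multiplying by $h$ and integrating (the products are integrable since $|\nabla f\cdot\nabla g_i|\leq\weakgrad f\,\weakgrad{g_i}$ by \eqref{Schwartz}, $\weakgrad f$ is bounded with compact support, and $L^p\cdot L^q\cdot L^\infty$ is integrable on a compact set), I obtain
\[
-\int_\Omega \nabla f\cdot\nabla(g_1g_2)\,h\,\d\mm=-\int_\Omega \nabla f\cdot\nabla g_2\,(hg_1)\,\d\mm-\int_\Omega \nabla f\cdot\nabla g_1\,(hg_2)\,\d\mm.
\]
Recognizing the two right-hand terms via the identities above, the right-hand side equals $\int_\Omega f\,\d\mu_1+\int_\Omega f\,\d\mu_2=\int_\Omega f\,\d\tilde\mu$. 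Since $f\in\test\Omega$ was arbitrary, this is exactly the equality characterizing $\tilde\mu\in\bdiv(h\nabla(g_1g_2))\restr\Omega$ on an infinitesimally strictly convex space (Remark \ref{re:unique}), whence $h\nabla(g_1g_2)\in D(\bdiv,\Omega)$, and uniqueness of $\tilde\mu$ follows once more from Remark \ref{re:unique}.

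I do not expect any substantial obstacle: all the analytic content is already packaged in the gradient Leibniz rule \eqref{eq:leibhil}, which is available precisely because of infinitesimal Hilbertianity. The only points requiring a little care are bookkeeping ones: checking that the coefficients $hg_1$ and $hg_2$ lie in $L^q_{loc}(\Omega)$ so that Definition \ref{def:divergence} applies to $hg_1\nabla g_2$ and $hg_2\nabla g_1$, justifying the integrability of the products that appear, and invoking the symmetry $\nabla f\cdot\nabla g_i=Df(\nabla g_i)$ so that the already-known measures $\mu_1,\mu_2$ can be substituted. These are exactly the same mild verifications that occur in the proof of Proposition \ref{prop:ling}.
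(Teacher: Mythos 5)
Your proof is correct and follows essentially the same route as the paper's: apply the pointwise Leibniz rule for gradients \eqref{eq:leibhil} to $\nabla f\cdot\nabla(g_1g_2)$, multiply by $h$, integrate, and identify the two resulting terms with $\mu_1$ and $\mu_2$ via the single-equality characterization of the divergence on infinitesimally Hilbertian (hence infinitesimally strictly convex) spaces. The additional bookkeeping you include (that $hg_1,hg_2\in L^q_{loc}(\Omega)$, that $g_1g_2\in\s^p_{loc}(\Omega)\cap L^\infty_{loc}(\Omega)$, and the integrability of the products) is left implicit in the paper but is correct and harmless.
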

\begin{proof}
It directly follows from the Leibniz rule for gradients given in \eqref{eq:leibhil}. Indeed, choose any $f \in \test\Omega$ and notice that
\[
\begin{split}
-\int_\Omega \nabla f\cdot\nabla (g_1g_2)\,h\d\mm&=-\int_\Omega(\nabla f\cdot\nabla g_2)\,g_1h\,\d\mm-\int_\Omega(\nabla f\cdot\nabla g_1)\,g_2h\,\d\mm\\
&=\int_\Omega f\,\d\mu_1+\int_\Omega f\,\d\mu_2=\int_\Omega f\,\d(\mu_1+\mu_2),
\end{split}
\]
which is the thesis.
\end{proof}
\section{Relation with non linear potential theory}\label{se:main}
In this section we present the main results of this paper, which consist in relating the calculus tools described up to now to (sub/super)-minimizers of the $p$-energy. 
\begin{definition}[Sub/superminimizers]
Let $(X,\sfd,\mm)$ be as in \eqref{eq:mms} supporting a $p_0$-Poincar\'e inequality, $\Omega\subset X$ an open set, $p\geq p_0$ strictly greater than 1 and $g:\Omega\to\R$ a Borel function.

We say that $g$ is a $p$-superminimizer provided $g\in\s^p(\Omega)$ and
\begin{equation}
\label{eq:supermin}
\int_\Omega\weakgrad g^p\,\d\mm\leq\int_\Omega\weakgrad{(g+f)}^p\,\d\mm\qquad\forall f\in \test\Omega,\ f\geq 0.
\end{equation}
Similarly, we say that  $g$ is a $p$-subminimizer provided $g\in\s^p(\Omega)$ and
\begin{equation}
\label{eq:submin}
\int_\Omega\weakgrad g^p\,\d\mm\leq\int_\Omega\weakgrad{(g+f)}^p\,\d\mm\qquad\forall f\in \test\Omega,\ f\leq 0.
\end{equation}  
Finally, we say that $g$ is a $p$-minimizer if it is both a $p$-superminimizer and a $p$-subminimizer.
\end{definition}
Notice that $g$ is a $p$-minimizer if and only if it holds
\begin{equation}
\label{eq:pmin}
\int_\Omega\weakgrad g^p\,\d\mm\leq\int_\Omega\weakgrad{(g+f)}^p\,\d\mm\qquad\forall f\in \test\Omega.
\end{equation}
Indeed the `if' is obvious, for the `only if' we pick $f\in\test\Omega$ apply \eqref{eq:supermin} with $\max\{f,0\}$ in place of $f$, \eqref{eq:submin} with $\min\{f,0\}$ in place of $f$ and add them up to get
\[
2\int_\Omega\weakgrad g^p\,\d\mm\leq\int_\Omega\weakgrad{(g+\max\{f,0\})}^p+\weakgrad{(g+\min\{f,0\})}^p\,\d\mm,
\]
then notice that the locality property \eqref{eq:LocalityWG'} yields that $\mm$-a.e. on $\{f\geq 0\}$ it holds
\[
\begin{split}
\weakgrad{(g+\max\{f,0\})}&=\weakgrad{(g+f)},\\
\weakgrad{(g+\min\{f,0\})}&=\weakgrad{g},
\end{split}
\]
and similarly $\mm$-a.e. on $\{f\leq 0\}$ it holds
\[
\begin{split}
\weakgrad{(g+\max\{f,0\})}&=\weakgrad{g},\\
\weakgrad{(g+\min\{f,0\})}&=\weakgrad{(g+f)}.
\end{split}
\]
Observe also that thanks to the approximation result in Theorem \ref{thm:approxLip}, \eqref{eq:supermin} is equivalent to 
\[
\int_\Omega\weakgrad g^p\,\d\mm\leq\int_\Omega\weakgrad{(g+f)}^p\,\d\mm\qquad\forall f\in \s^p(X,\sfd,\mm), \ f\geq0,\ \supp(f)\subset\subset\Omega,
\]
and similarly, \eqref{eq:submin} is equivalent to
\[
\int_\Omega\weakgrad g^p\,\d\mm\leq\int_\Omega\weakgrad{(g+f)}^p\,\d\mm\qquad\forall f\in \s^p(X,\sfd,\mm), \ f\leq0,\ \supp(f)\subset\subset\Omega.
\]
Indeed, first truncate the $f$ to get a function in $W^{1,p}$, then apply Theorem \ref{thm:approxLip} and finally pass to the limit in the truncation.

We have the following result.
\begin{theorem}[PDE characterization of $p$-minimizers]\label{thm:Solution}
Let $(X,\sfd, \mm)$ be as in \eqref{eq:mms}  and supporting a $p_0$-Poincar\'e inequality \eqref{LPI}. Let $\Omega\subset X$ be an open subset, $p\geq p_0$ strictly greater than 1 and  $g \in \s^p(\Omega)$. 

Then the following are equivalent:
\begin{itemize}
\item[i)] $g$ is a $p$-minimizer.
\item[ii)] For any $f\in\test\Omega$ it holds 
\begin{equation}
\label{eq:permin}
-\int_\Omega D^+f(\nabla g)\weakgrad g^{p-2}\,\d\mm\leq 0\leq-\int_\Omega D^-f(\nabla g)\weakgrad g^{p-2}\,\d\mm.
\end{equation}
\item[iii)]  $|D g|_w ^{p-2} \nabla g \in D(\bdiv, \Omega)$ and the null measure  $0$ belongs to $\bdiv (|D g|_w ^{p-2} \nabla g)_{|\Omega}$.
\item[iv)] For any $f\in\s^p(X,\sfd,\mm)$ with support compact and contained in $\Omega$ \eqref{eq:permin} holds.
\end{itemize}
\end{theorem}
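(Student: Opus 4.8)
The plan is to route the whole equivalence through the single scalar function $\varepsilon\mapsto E_f(\varepsilon):=\int_\Omega\weakgrad{(g+\varepsilon f)}^p\,\d\mm$ and to recognize conditions (i)--(iv) as statements about its behaviour at $\varepsilon=0$. For fixed $f\in\test\Omega$ this function is finite (by \eqref{eq:subAdd}, since $\weakgrad{f}$ is bounded with compact support) and convex (by the pointwise convexity \eqref{eq:convexity} together with linearity of the integral).

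The core of the argument is the equivalence (i)$\Leftrightarrow$(ii). Since $\varepsilon f\in\test\Omega$ for every $\varepsilon\in\R$, condition (i) is exactly the assertion that $0$ is a global minimum of $E_f$ for every $f\in\test\Omega$; and for a convex function of one real variable this happens precisely when $E_f'(0^-)\le 0\le E_f'(0^+)$. I would then identify these one-sided derivatives as
\[
E_f'(0^\pm)=p\int_\Omega D^\pm f(\nabla g)\,\weakgrad{g}^{p-2}\,\d\mm,
\]
so that $E_f'(0^+)\ge 0\ge E_f'(0^-)$ becomes verbatim \eqref{eq:permin}. Letting $f$ range over $\test\Omega$ then yields (i)$\Leftrightarrow$(ii).

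The hard (and essentially only nontrivial) step is this differentiation of the energy, that is, interchanging $\lim_{\varepsilon\to0}$ with $\int_\Omega$. By convexity, for $\varepsilon>0$ the difference quotients $\tfrac1{p\varepsilon}\big(\weakgrad{(g+\varepsilon f)}^p-\weakgrad{g}^p\big)$ are monotone in $\varepsilon$ and, by the discussion preceding Definition \ref{def:DfDg}, converge $\mm$-a.e.\ as $\varepsilon\downarrow0$ to $D^+f(\nabla g)\weakgrad{g}^{p-2}$. These quotients are bounded above by the fixed $L^1$ function $\tfrac1p\weakgrad{(g+f)}^p$ and below, using \eqref{Schwartz}, by $-\weakgrad{f}\,\weakgrad{g}^{p-1}$, both integrable on the compact support of $f$ (here $\weakgrad{g}^{p-1}\in L^q$ since $\weakgrad{g}\in L^p$ and $(p-1)q=p$). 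Dominated (or monotone) convergence then gives the formula for $E_f'(0^+)$, and the case $\varepsilon\uparrow0$ for $E_f'(0^-)$ is symmetric.

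Finally, (ii)$\Leftrightarrow$(iii) is immediate from the definition of divergence: taking $h:=\weakgrad{g}^{p-2}$ and the null measure $\mu:=0$ in \eqref{eq:defdiv}, and using that $\sign(h)=+1$ on $\{\weakgrad{g}>0\}$ while the products $D^\pm f(\nabla g)\weakgrad{g}^{p-2}$ vanish by convention on $\{\weakgrad{g}=0\}$, the defining inequalities for $0\in\bdiv(\weakgrad{g}^{p-2}\nabla g)\restr\Omega$ reduce exactly to \eqref{eq:permin} (one also checks that the relevant products lie in $L^1$, so the object is well posed). For (ii)$\Leftrightarrow$(iv), the implication (iv)$\Rightarrow$(ii) is trivial because each $f\in\test\Omega$ is admissible in (iv); conversely, given a compactly supported $f\in\s^p(X,\sfd,\mm)$ I would approximate it by $f_n\in\test\Omega$ converging in $W^{1,p}$ with uniformly compact supports (Theorem \ref{thm:approxLip}) and pass to the limit in \eqref{eq:permin}, the error being controlled by the $1$-Lipschitz estimate \eqref{eq:unolip} via
\[
\Big|\int_\Omega D^\pm f_n(\nabla g)\,\weakgrad{g}^{p-2}\,\d\mm-\int_\Omega D^\pm f(\nabla g)\,\weakgrad{g}^{p-2}\,\d\mm\Big|\le\int_\Omega\weakgrad{(f_n-f)}\,\weakgrad{g}^{p-1}\,\d\mm\to0
\]
by H\"older's inequality.
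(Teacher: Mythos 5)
Your proof is correct and follows essentially the same route as the paper: the paper also establishes (i)$\Leftrightarrow$(ii) by differentiating $\varepsilon\mapsto\int_\Omega\weakgrad{(g+\varepsilon f)}^p\,\d\mm$ at $\varepsilon=0$, using the monotonicity of the difference quotients coming from convexity together with dominated convergence (its integrable bounds are the difference quotients at $\varepsilon=\pm1$ rather than your bound via \eqref{Schwartz}, which is immaterial), and it treats (ii)$\Leftrightarrow$(iii) and (iv)$\Rightarrow$(ii) exactly as you do. The only point to tighten is (ii)$\Rightarrow$(iv): a compactly supported $f\in\s^p(X,\sfd,\mm)$ need not lie in $L^p(X,\mm)$, so it cannot in general be approximated in $W^{1,p}$ and Theorem \ref{thm:approxLip} does not apply to it directly; the paper first truncates, setting $f^N:=\min\{\max\{f,-N\},N\}\in W^{1,p}(X,\sfd,\mm)$, approximates $f^N$ by Lipschitz functions via \eqref{eq:unolip}, and then lets $N\to\infty$, which only requires $\weakgrad{(f^N-f)}\to0$ in $L^p$.
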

\begin{proof}$\ $\\
\noindent{$\mathbf{(i) \Rightarrow (ii)}.$}   Fix  $f \in \Test(\Omega)$ and $\varepsilon>0$. Obviously $\eps f$ has compact support in $\Omega$ and belongs to $\s^p(\Omega)$,  hence writing inequality \eqref{eq:pmin} with $\varepsilon f$ in place of $f$ and dividing by $\varepsilon p$ we get
\begin{equation}\label{eq:4.2a}
-\int_{\Omega} \frac{|D(g+\varepsilon f)|_w^p - |D g|_w^p}{\varepsilon p} \, \d \mm \leq 0,
\end{equation}
now let $\varepsilon \downarrow 0$ and recall that
\[
 D^{+}f (\nabla g) |D g|_w^{p-2}=\lim_{\varepsilon\downarrow 0} \frac{|D(g+\varepsilon f)|_w^p - |D g|_w^p}{\varepsilon p},
\]
 and that (recall the discussion before Definition \ref{def:DfDg}) it holds
\[
-\frac{|D(g- f)|_w^p - |D g|_w^p}{p}\leq \sup_{\eps\in(0,1)} \frac{|D(g+\varepsilon f)|_w^p - |D g|_w^p}{\varepsilon p}\leq \frac{|D(g+ f)|_w^p - |D g|_w^p}{ p}.
\]
Thus by the  dominated convergence theorem we can pass to the limit in \eqref{eq:4.2a} as $\varepsilon \downarrow 0$ and obtain
 \begin{equation}\label{eq:4.3a}
 -\int_{\Omega} D^+ f (\nabla g) |D g|^{p-2}_w  \, \d \mm \leq 0.
 \end{equation}  
Arguing analogously  for $\varepsilon<0$ we get 
\begin{equation}\label{eq:4.3b}
 -\int_{\Omega} D^- f (\nabla g) |D g|^{p-2}_w  \, \d \mm \geq 0,
 \end{equation}
putting together \eqref{eq:4.3a} and \eqref{eq:4.3b} we obtain 
$$-\int_{\Omega} D^+ f (\nabla g) |D g|^{p-2}_w  \, \d \mm \leq 0 \leq -\int_{\Omega} D^- f (\nabla g) |D g|^{p-2}_w \d \mm, \qquad \forall f \in \Test(\Omega),$$
which is exactly (ii).
 
\noindent{$\mathbf{(ii) \Rightarrow (i)}.$}  Let $f\in\test\Omega$ and notice that by the very definition of $D^+f(\nabla g)$ we have
\[
D^+f(\nabla g)\weakgrad g^{p-2}\leq \frac{\weakgrad{(g+f)}^p-\weakgrad g}{p},\qquad\mm\textrm{-a.e. on }\Omega.
\]
Integrating this inequality and using the assumption \eqref{eq:permin} we deduce
\begin{equation}
\label{eq:pertest}
\int_{\Omega} |D(g+ f)|_w^p - |D g|_w^p\, \d \mm \geq 0,\qquad\forall f\in\test\Omega,
\end{equation}
which is the thesis.

\noindent{$\mathbf{(ii) \Leftrightarrow (iii)}.$} Follows by the  definition of $\bdiv(\weakgrad g^{p-2}\nabla g)\restr\Omega$.

\noindent{$\mathbf{(iv) \Rightarrow (ii)}.$} Obvious consequence of the fact that any $f\in\test\Omega$ is in $\s^p(X,\sfd,\mm)$ and with support compact and contained in $\Omega$.

\noindent{$\mathbf{(ii) \Rightarrow (iv)}.$} Let $f\in\s^p(\Omega)$ be with compact support. For $N>0$ let $f^N:=\min\{\max\{f,-N\},N\}$ and notice that $f^N\in W^{1,p}(X,\sfd,\mm)$ and has support compact and contained in $\Omega$. By  the approximation Theorem \ref{thm:approxLip} we can find a sequence $(f_n)\subset\test\Omega$  such that $\|\weakgrad{(f_n-f^N)}\|_{L^p(\Omega)}\to 0$ as $n\to\infty$. Write \eqref{eq:permin} with $f_n$ in place of $f$, recall the 1-Lipschitz estimate \eqref{eq:unolip} and let $n\to\infty$ to get
\[
-\int_\Omega D^+f^N(\nabla g)\weakgrad g^{p-2}\,\d\mm\leq 0\leq-\int_\Omega D^-f^N(\nabla g)\weakgrad g^{p-2}\,\d\mm.
\]
To conclude, just let $N\to\infty$ in these inequalities.
\end{proof}
\begin{theorem}[Almost PDE characterization of $p$-sub/superminimizers]\label{thm:SuperSolution}
Let $(X,\sfd, \mm)$ be as in \eqref{eq:mms}  and supporting a $p_0$-Poincar\'e inequality \eqref{LPI}. Let $\Omega\subset X$ be an open subset, $p\geq p_0$ strictly greater than 1  and  $g \in \s^p(\Omega)$. 

Then the following are equivalent:
\begin{itemize}
\item[i)] $g$ is a $p$-superminimizer.
\item[ii)] For any $f\in\test\Omega$ non-negative it holds
\begin{equation}
\label{eq:2}
-\int_{\Omega} D^+ f (\nabla g) |D g|^{p-2}_w  \, \d \mm \leq 0.
\end{equation}
\item[iii)] For any $f\in\s^p(X,\sfd,\mm)$ non-negative and with support compact and contained in $\Omega$ inequality \eqref{eq:2} holds.
\end{itemize}
Similarly, the following are equivalent:
\begin{itemize}
\item[i')] $g$ is a $p$-subminimizer.
\item[ii')] For any $f\in\test\Omega$ non-positive it holds
\begin{equation}
\label{eq:3}
-\int_{\Omega} D^+ f (\nabla g) |D g|^{p-2}_w  \, \d \mm \leq 0.
\end{equation}
\item[iii')] For any $f\in\s^p(X,\sfd,\mm)$ non-positive and with support compact and contained in $\Omega$ inequality \eqref{eq:2} holds.
\end{itemize}
\end{theorem}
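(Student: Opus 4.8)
The plan is to mirror, almost verbatim, the proof of Theorem~\ref{thm:Solution}, the one essential difference being that the sign constraint on the admissible perturbations forces a \emph{one-sided} variation. I would first establish the three superminimizer equivalences $(i)\Leftrightarrow(ii)\Leftrightarrow(iii)$ and then recover the subminimizer case by the substitution $g\mapsto -g$.

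For $(i)\Rightarrow(ii)$ I would fix a non-negative $f\in\test\Omega$ and, for $\eps>0$, apply the superminimizer inequality \eqref{eq:supermin} to $\eps f$ (still non-negative and compactly supported). After dividing by $\eps p>0$ this gives
\[
-\int_\Omega \frac{\weakgrad{(g+\eps f)}^p-\weakgrad g^p}{\eps p}\,\d\mm\le 0.
\]
Letting $\eps\downarrow 0$ and invoking the domination bound recalled before Definition~\ref{def:DfDg} (the difference quotient is squeezed between its $\eps=\pm1$ values), dominated convergence together with the identity $D^+f(\nabla g)\weakgrad g^{p-2}=\lim_{\eps\downarrow 0}\frac{1}{\eps p}(\weakgrad{(g+\eps f)}^p-\weakgrad g^p)$ yields \eqref{eq:2}. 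Note that, unlike the minimizer case, only $\eps\downarrow 0$ is available here, so one obtains the $D^+$ inequality alone---which is exactly what \eqref{eq:2} demands. For the converse $(ii)\Rightarrow(i)$ I would use the pointwise bound $D^+f(\nabla g)\weakgrad g^{p-2}\le \frac1p(\weakgrad{(g+f)}^p-\weakgrad g^p)$ (the $\eps=1$ instance of the infimum formula), integrate it, and combine with \eqref{eq:2} to get $\int_\Omega\weakgrad{(g+f)}^p\,\d\mm\ge\int_\Omega\weakgrad g^p\,\d\mm$ for every non-negative $f\in\test\Omega$, which is precisely \eqref{eq:supermin}.

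The equivalence $(ii)\Leftrightarrow(iii)$ is the routine passage from test functions to $\s^p$ functions: $(iii)\Rightarrow(ii)$ is immediate since every $f\in\test\Omega$ lies in $\s^p$, while for $(ii)\Rightarrow(iii)$ I would truncate a non-negative $f\in\s^p(\Omega)$ with compact support to $f^N:=\min\{f,N\}\in W^{1,p}$, approximate $f^N$ by Lipschitz functions through Theorem~\ref{thm:approxLip}, pass to the limit using the $1$-Lipschitz estimate \eqref{eq:unolip}, and finally let $N\to\infty$. The delicate point---and the one I would flag as the main obstacle---is that the whole approximating sequence must remain non-negative in order to be admissible in \eqref{eq:2}; this is guaranteed exactly by the sign-preserving clause of Theorem~\ref{thm:approxLip} (``if $f$ is non-negative the $f_n$'s can be chosen non-negative as well''), so the only real care needed is to verify that this clause is being applied. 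Without it, the one-sided argument would break down.

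Finally, for the subminimizer equivalences I would observe that $g$ is a $p$-subminimizer if and only if $-g$ is a $p$-superminimizer, since $\weakgrad{(-g)}=\weakgrad g$ and $f\le 0\Leftrightarrow -f\ge 0$. Applying the superminimizer result to $-g$ with $\tilde f:=-f\ge 0$ and translating back via the sign rules \eqref{eq:sign}, namely $D^+f(\nabla(-g))=-D^-f(\nabla g)$ and hence $D^-(-f)(\nabla g)=-D^+f(\nabla g)$, converts the inequality \eqref{eq:2} for $(-g,\tilde f)$ into \eqref{eq:3} for $(g,f)$ with $f\le 0$. Alternatively one can simply repeat the direct argument verbatim, again using only $\eps\downarrow 0$ because $\eps f\le 0$ for $\eps>0$ when $f\le 0$.
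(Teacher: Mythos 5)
Your proposal is correct and follows essentially the same route as the paper: a one-sided variation $\eps\downarrow 0$ with $\eps f$ for $(i)\Rightarrow(ii)$, the pointwise bound $D^+f(\nabla g)\weakgrad g^{p-2}\le \frac1p(\weakgrad{(g+f)}^p-\weakgrad g^p)$ for the converse, and truncation plus the sign-preserving Lipschitz approximation of Theorem~\ref{thm:approxLip} with the estimate \eqref{eq:unolip} for $(ii)\Leftrightarrow(iii)$. The paper disposes of the primed statements by repeating the same arguments rather than by the $g\mapsto -g$ reduction, but both of your suggested options are valid and the difference is purely cosmetic.
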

\begin{proof}$\ $\\
\noindent{$\mathbf{(i)\Rightarrow(ii)}$}. We argue exactly as in the first part of the proof of Theorem \ref{thm:Solution}. Fix a non-negative  $f \in \Test(\Omega)$ and $\varepsilon>0$. Writing inequality \eqref{eq:supermin} with $\varepsilon f$ in place of $f$ and dividing by $\varepsilon p$ we get
\[
-\int_{\Omega} \frac{|D(g+\varepsilon f)|_w^p - |D g|_w^p}{\varepsilon p} \, \d \mm \leq 0.
\]
Letting $\eps\downarrow0$ and using the  dominated convergence theorem we conclude.

\noindent{$\mathbf{(ii)\Rightarrow(i)}$}.   We follow the same arguments used in the second part of the proof of Theorem \ref{thm:Solution}. Let $f\in\test\Omega$ be non-negative and notice that by the very definition of $D^+f(\nabla g)$ we have
\[
D^+f(\nabla g)\weakgrad g^{p-2}\leq \frac{\weakgrad{(g+f)}^p-\weakgrad g^p}{p},\qquad\mm\textrm{-a.e. on }\Omega.
\]
Integrating this inequality and using \eqref{eq:2} we conclude.

\noindent{$\mathbf{(iii)\Rightarrow(ii)}$}.  Obvious consequence of the fact that any $f\in\test\Omega$ is in $\s^p(X,\sfd,\mm)$ and with support compact and contained in $\Omega$.

\noindent{$\mathbf{(ii) \Rightarrow (iii)}.$} It follows from the approximation with Lipschitz functions provided in Theorem \ref{thm:approxLip} in conjunction  with the 1-Lipschitz estimate \eqref{eq:unolip}, as in the proof of Theorem \ref{thm:Solution}.

\noindent{$\mathbf{(i') \Leftrightarrow (ii')\Leftrightarrow (iii')}.$} The conclusion comes from the very same arguments.
\end{proof}
It is important to remark that (ii) (and similarly (ii')) of the above theorem is in general not the same as requiring $\weakgrad g^{p-2}\nabla g\in D(\bdiv,\Omega)$ with $0\geq\mu$ for some/any $\mu\in \bdiv(\weakgrad g^{p-2}\nabla g)$. This is due to the fact that the object $D^+f(\nabla g)\weakgrad g^{p-2}$ appears in the left-hand-side of \eqref{eq:2}, rather than $D^-f(\nabla g)\weakgrad g^{p-2}$ which would allow to apply Proposition \ref{prop:comparison}. Yet, at least for infinitesimally strictly convex spaces a  PDE characterization of sub/superminimizers  can be obtained, as shown by the following Corollary.
\begin{corollary}[PDE characerization of $p$-sub/superminimizers on inf. strictly convex spaces]\label{cor:SupSolSIC}
Let $(X,\sfd, \mm)$ be as in \eqref{eq:mms} supporting a $p_0$-Poincar\'e inequality, $p_0>1$, and $q_0$-infinitesimally strictly convex, where $q_0$ is the conjugate exponent of $p_0$. Let $\Omega\subset X$ be an open subset, $p\geq p_0$   and  $g \in \s^p(\Omega)$. 

Then the following are equivalent:
\begin{itemize}
\item[i)] $g$ is a $p$-superminimizer.
\item[ii)] It holds $\weakgrad g^{p-2} \nabla g \in D(\bdiv, \Omega)$ and the only measure  $\mu$ in  $\bdiv (\weakgrad g^{p-2}\nabla g)\restr\Omega$ is non-positive.
\end{itemize}
Similarly, the following are equivalent:
\begin{itemize}
\item[i')] $g$ is a $p$-subminimizer.
\item[ii')] It holds $\weakgrad g^{p-2} \nabla g \in D(\bdiv, \Omega)$ and  $\mu$ in  $\bdiv (\weakgrad g^{p-2}\nabla g)\restr\Omega$ is non-negative.
\end{itemize}
\end{corollary}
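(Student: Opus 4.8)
The plan is to obtain the corollary by feeding the ``almost PDE characterization'' of Theorem~\ref{thm:SuperSolution} into the existence-and-comparison result of Proposition~\ref{prop:comparison}, the bridge between the two being that $q_0$-infinitesimal strict convexity forces $D^+f(\nabla g)=D^-f(\nabla g)=:Df(\nabla g)$. Indeed, since $g\in\s^p(\Omega)\subset\s^{p_0}_{loc}(\Omega)$ and every $f\in\test\Omega$ is Lipschitz, hence in $\s^{p_0}_{loc}(\Omega)$, Proposition~\ref{prop:freddo} (or \eqref{eq:InfStConvPointOmega} read at exponent $p_0$) guarantees this identity for the functions at hand. Throughout I would set $h:=\weakgrad g^{p-2}$, which is non-negative, so that $\sign(h)=1$ and therefore $D^{\sign(h)}f(\nabla g)=D^{-\sign(h)}f(\nabla g)=Df(\nabla g)$; by Remark~\ref{re:unique} the set $\bdiv(h\nabla g)\restr\Omega$ carries at most one measure, so that the phrase ``the only measure'' in (ii) and (ii') is legitimate once membership in $D(\bdiv,\Omega)$ is secured.

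For $(ii)\Rightarrow(i)$ I would argue directly. If $\weakgrad g^{p-2}\nabla g\in D(\bdiv,\Omega)$ with unique measure $\mu\leq 0$, then Definition~\ref{def:divergence} of divergence, collapsed via infinitesimal strict convexity to the equality $-\int_\Omega Df(\nabla g)h\,\d\mm=\int_\Omega f\,\d\mu$, gives for every non-negative $f\in\test\Omega$
\[
-\int_\Omega D^+f(\nabla g)\weakgrad g^{p-2}\,\d\mm=\int_\Omega f\,\d\mu\leq 0,
\]
because $f\geq 0$ and $\mu\leq 0$. This is exactly condition (ii) of Theorem~\ref{thm:SuperSolution}, whence $g$ is a $p$-superminimizer.

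The reverse implication $(i)\Rightarrow(ii)$ is where the real content lies, and it is precisely here that infinitesimal strict convexity is indispensable. Theorem~\ref{thm:SuperSolution} turns (i) into the inequality $-\int_\Omega D^+f(\nabla g)\weakgrad g^{p-2}\,\d\mm\leq 0$ for all non-negative $f\in\test\Omega$. The main obstacle, already flagged in the discussion preceding the corollary, is that the comparison hypothesis \eqref{eq:comparison} of Proposition~\ref{prop:comparison} controls $D^{-\sign(h)}f(\nabla g)h=D^-f(\nabla g)h$, whereas Theorem~\ref{thm:SuperSolution} produces a bound on $D^+f(\nabla g)h$; the two agree only because of infinitesimal strict convexity. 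Having identified them, hypothesis \eqref{eq:comparison} holds with comparison measure $\tilde\mu=0$ (the integrals are finite since $|D^{\pm}f(\nabla g)|\weakgrad g^{p-2}\leq \weakgrad f\,\weakgrad g^{p-1}\in L^1$ by \eqref{Schwartz} and the compact support of $f$), and Proposition~\ref{prop:comparison} yields $\weakgrad g^{p-2}\nabla g\in D(\bdiv,\Omega)$ together with $\mu\leq\tilde\mu=0$ for the element $\mu$ of $\bdiv(\weakgrad g^{p-2}\nabla g)\restr\Omega$, unique by Remark~\ref{re:unique}. This is (ii).

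Finally, for $(i')\Leftrightarrow(ii')$ I would avoid repeating the argument and instead exploit the symmetry $g\mapsto -g$: since $\weakgrad{(-g)}=\weakgrad g$ and $\weakgrad{(-g+f)}=\weakgrad{(g-f)}$, the function $g$ is a $p$-subminimizer if and only if $-g$ is a $p$-superminimizer. Applying the already-proved equivalence $(i)\Leftrightarrow(ii)$ to $-g$ gives $\weakgrad{(-g)}^{p-2}\nabla(-g)\in D(\bdiv,\Omega)$ with unique non-positive measure; using $\weakgrad{(-g)}^{p-2}=\weakgrad g^{p-2}$ together with the $1$-homogeneity of the divergence in $g$ recorded after Definition~\ref{def:divergence} (so that $-\mu\in\bdiv(h\nabla(-g))\restr\Omega$ whenever $\mu\in\bdiv(h\nabla g)\restr\Omega$), this non-positivity translates into the non-negativity of the unique measure in $\bdiv(\weakgrad g^{p-2}\nabla g)\restr\Omega$, which is exactly (ii'). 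I expect every step beyond the sign-matching in $(i)\Rightarrow(ii)$ to be routine bookkeeping with the calculus rules already established.
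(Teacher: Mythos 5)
Your proposal is correct and follows essentially the same route as the paper: identify $D^+f(\nabla g)$ with $D^-f(\nabla g)$ via $q_0$-infinitesimal strict convexity, feed the resulting one-sided inequality from Theorem \ref{thm:SuperSolution} into Proposition \ref{prop:comparison} with $\tilde\mu=0$ to get existence of the divergence and the sign of $\mu$, and read the converse directly off Definition \ref{def:divergence}. The reduction of the subminimizer case to the superminimizer one via $g\mapsto -g$ and the $1$-homogeneity of the divergence is a valid way of filling in what the paper leaves implicit.
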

\begin{proof}
Just recall that on $q_0$-infinitesimally strictly convex spaces it holds
\[
-\int_{\Omega} D^- f (\nabla g) |D g|^{p-2}_w  \, \d \mm=-\int_{\Omega} D^+ f (\nabla g) |D g|^{p-2}_w  \, \d \mm,\qquad\forall f\in \s^p(\Omega).
\]
Therefore the conclusions come from Theorem \ref{thm:SuperSolution} and  Proposition \ref{prop:comparison}.  
\end{proof}

\section{Some applications}\label{se:appl}
\subsection{Sheaf property of harmonic functions}
As a consequence of the local nature of the definition of distributional divergence,   in case of $q_0$-infinitesimally strictly convex spaces we can give a positive answer to the Open Problem 9.22 in \cite{BjornBjorn11} concerning the sheaf property of harmonic functions. More precisely, we will prove the sheaf property of $p$-sub/superminimizers, while the problems in \cite{BjornBjorn11} are stated for $p$-sub/superharmonic functions. The latter are defined in terms of comparisons with harmonic functions attaining the same value of the given function at boundaries of open sets. Like in the standard Euclidean case, there are strong connections between the two concepts, see Chapter 9 in \cite{BjornBjorn11} for an overview.
\begin{proposition}[Sheaf property of $p$-minimizers and $p$-sub/superminimizers]\label{prop:sheaf}
Let $(X,\sfd, \mm)$ be as in \eqref{eq:mms}, supporting a $p_0$-Poincar\'e inequality \eqref{LPI}, $p_0>1$, and $q_0$-infinitesimally strictly convex, where $q_0$ is the conjugate exponent to $p_0$. Let  $p\geq p_0$ and  $\{\Omega_i\}_{i \in I}$ a family of open subsets. Put $\Omega:=\cup_i\Omega_i$ and let $g \in \s^p(\Omega)$.

Then  $g$ is a $p$-minimizer  (resp. $p$-superminimizer, resp. $p$-subminimizer)  on $\Omega$ if and only if it is a $p$-minimizer (resp. $p$-superminimizer, resp. $p$-subminimizer)  on $\Omega_i$ for every $i\in I$.
\end{proposition}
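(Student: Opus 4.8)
The plan is to reduce the statement to the PDE characterizations already established and then invoke the locality properties of the divergence. Since $g$ is a $p$-minimizer on a given open set exactly when it is simultaneously a $p$-superminimizer and a $p$-subminimizer there, it suffices to prove the sheaf property for super- and subminimizers; the minimizer case then follows by intersecting the two equivalences over the cover (or, alternatively, directly from the equivalence (i)$\Leftrightarrow$(iii) of Theorem \ref{thm:Solution} with the null measure). Throughout I set $h:=\weakgrad g^{p-2}$, which, thanks to $g\in\s^p(\Omega)$ and the bound \eqref{Schwartz}, is precisely the coefficient that Theorem \ref{thm:Solution} and Corollary \ref{cor:SupSolSIC} attach to a $p$-(sub/super)minimizer.

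For the superminimizer case I would argue as follows. By Corollary \ref{cor:SupSolSIC} applied on each $\Omega_i$, the function $g$ is a $p$-superminimizer on $\Omega_i$ if and only if $h\nabla g\in D(\bdiv,\Omega_i)$ and the unique measure $\mu_i\in\bdiv(h\nabla g)\restr{\Omega_i}$ (unique by $q_0$-infinitesimal strict convexity, see Remark \ref{re:unique}) satisfies $\mu_i\leq 0$. First I would feed the family $\{\Omega_i\}_{i\in I}$ into Proposition \ref{pro:LocGlo}: its hypotheses hold because $g\in\s^p_{loc}(\Omega)$, $h\in L^q_{loc}(\Omega)$, and $h\nabla g\in D(\bdiv,\Omega_i)$ for every $i$. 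Its conclusion is that the $\mu_i$ agree on the overlaps $\Omega_i\cap\Omega_j$ and glue to a unique $\mu\in\bdiv(h\nabla g)\restr\Omega$ with $\mu\restr{\Omega_i}=\mu_i$. Since the sign of a Radon measure is a local property, namely $\mu\leq 0$ on $\Omega$ if and only if $\mu\restr{\Omega_i}\leq 0$ for every member of the open cover, the condition $\mu_i\leq 0$ for all $i$ is equivalent to $\mu\leq 0$, and Corollary \ref{cor:SupSolSIC} on $\Omega$ then yields that $g$ is a $p$-superminimizer on $\Omega$. The reverse direction (global$\Rightarrow$local) is cheaper: if $g$ is a superminimizer on $\Omega$ then $\mu\leq 0$ on $\Omega$, and the global-to-local property \eqref{eq:globtoloc} gives $\mu\restr{\Omega_i}\in\bdiv(h\nabla g)\restr{\Omega_i}$ with $\mu\restr{\Omega_i}\leq 0$, whence $g$ is a superminimizer on each $\Omega_i$. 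The subminimizer case is identical with all sign conditions reversed ($\mu\geq 0$), and the minimizer case is recovered by taking $\mu=0$ (equivalently, by conjoining the super- and subminimizer equivalences over $i\in I$).

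The genuinely delicate point, and the reason the statement is confined to $q_0$-infinitesimally strictly convex spaces, is entirely packaged inside Proposition \ref{pro:LocGlo}: gluing the $\mu_i$ into a single $\mu$ requires both that each $\mu_i$ be \emph{uniquely} determined (so that $\mu_i$ and $\mu_j$ are forced to coincide on $\Omega_i\cap\Omega_j$) and that the partition-of-unity computation \eqref{eq:caffe} close up as an equality rather than a one-sided inequality; both hinge on the linearity of $f\mapsto Df(\nabla g)$, i.e.\ on infinitesimal strict convexity (compare Remark \ref{re:unclear}). The only routine verification left is the integrability $h=\weakgrad g^{p-2}\in L^q_{loc}(\Omega)$ needed to apply Proposition \ref{pro:LocGlo}: for $p\geq 2$ this is immediate from $\weakgrad g\in L^p(\Omega)$ and local finiteness of $\mm$, since $(p-2)q\leq p$; for $1<p<2$ the divergence objects are, as everywhere in the paper, read through the combination $D^\pm f(\nabla g)\weakgrad g^{p-2}$ (set to $0$ where $\weakgrad g=0$), which is controlled in $L^1$ by \eqref{Schwartz}.
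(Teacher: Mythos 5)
Your proof is correct and follows essentially the same route as the paper's: Corollary \ref{cor:SupSolSIC} on each $\Omega_i$, gluing via Proposition \ref{pro:LocGlo} with $h=\weakgrad g^{p-2}$, and Corollary \ref{cor:SupSolSIC} again on $\Omega$ (the paper dispatches the easy global-to-local direction directly from the variational definition rather than via \eqref{eq:globtoloc}, but both are fine). Your extra remarks on the integrability of $\weakgrad g^{p-2}$ and on where infinitesimal strict convexity is really used are accurate and consistent with the paper's conventions.
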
  
\begin{proof}
It is clear that if $g$ is a $p$-minimizer (resp. $p$-superminimizer,  resp. $p$-subminimizer)  on $\Omega$ then it is also  a $p$-minimizer (resp. $p$-superminimizer,  resp. $p$-subminimizer)  on $\Omega_i$ for every $i\in I$, thus we pass to the converse implication. 

Assume that $g$ is a $p$-superminimizer on $\Omega_i$ for every $i\in I$. Then by Corollary \ref{cor:SupSolSIC} we have $\weakgrad g^{p-2} \nabla g \in D(\bdiv, \Omega_i)$ for every $i\in I$ and the only measure $\mu_i$ in $\bdiv(\weakgrad g^{p-2}\nabla g)\restr{\Omega_i}$ satisfies $\mu_i\leq 0$. We now apply  Proposition \ref{pro:LocGlo} with $h:=\weakgrad g^{p-2}\in L^q(\Omega)$ to deduce that $\weakgrad g^{p-2} \nabla g \in D(\bdiv,\Omega)$ and that calling $\mu$ the only measure in $\bdiv(\weakgrad g^{p-2}\nabla g)\restr\Omega $ it holds $\mu_{|\Omega_i}=\mu_i$ for all $i\in I$. Hence $\mu\leq 0$ and using again Corollary \ref{cor:SupSolSIC} we conclude that $g$ is a $p$-superminimizer  on $\Omega$.

A similar argument applies to  $p$-subminimizers and   $p$-minimizers.
\end{proof}
\begin{remark}\label{re:mah}{\rm
Given that we assumed infinitesimal strict convexity to prove the sheaf property of $p$-minimizers, it is natural to question what happens if this hypothesis is dropped. We don't know. Worse than this, we don't know the answer neither for $p=2$ when the base space is $\R^2$ equipped with the Lebesgue measure and a non-strictly convex norm. The fact that this problem looks - to us - non-trivial to treat even in such a simple and concrete case, suggests that there might be additional intrinsic geometric/analytic complications when the considered space is not assumed to be infinitesimally strictly convex.
}\fr\end{remark}
\subsection{Composition of superminimizers with convex and increasing functions}
The availability of a differential calculus allows, in some case, to simplify proofs or at least to let them look closer to what they are in the standard Euclidean case. As an example we give a new proof of the fact that the composition of a superminimizer with a convex and non-increasing function is a subminimizer and some related properties, see Theorem 9.41 and Corollary 9.43 in \cite{BjornBjorn11} for a different approach to similar statements. On infinitesimally strictly convex spaces and for smooth functions $\varphi$ all the properties stated below are a consequence of the equivalence stated in Corollary \ref{cor:SupSolSIC} and the chain rule
\[
\begin{split}
\bdiv(\weakgrad{(\varphi\circ g)}^{p-2}\nabla (\varphi\circ g))=&|\varphi'\circ g|^{p-2}\varphi'\circ g\,\bdiv(\weakgrad g^{p-2}\nabla g)\\
&+(p-1)|\varphi'\circ g|^{p-2}\varphi''\circ g\weakgrad g^p\mm,
\end{split}
\]
which in turn follows from  \eqref{eq:composition}, \eqref{eq:ChainRuleg} and Proposition \ref{prop:divLap}.

Yet, on the general case we can't proceed this way for two reasons: the first is that if the space is not infinitesimally strictly convex  we don't have a PDE characterization of sub/superminimizers, the second is that if $\varphi$ is not $C^{1,1}$ the term $|\varphi'\circ g|^{p-2}\varphi''\circ g\weakgrad g^p$ in the above formula makes no sense.
\begin{proposition}\label{prop:compsub}
Let $(X,\sfd,\mm)$ be as in \eqref{eq:mms} be supporting a $p_0$-Poincar\'e inequality \eqref{LPI}. Let $\Omega\subset X$ an open set, $p\geq p_0$  strictly greater than 1 and $g\in\s^p(\Omega)$. Also, let $I\subset\R$ be a closed interval such that $\mm(g^{-1}(\R\setminus I))=0$ and $\varphi:I\to\R$ a function.

Then the following are true.
\begin{itemize}
\item[i)] If $g$ is a $p$-superminimizer and $\varphi$ is convex, Lipschitz and non-increasing, then $\varphi\circ g$ is a $p$-subminimizer
\item[ii)] If $g$ is a $p$-superminimizer and $\varphi$ is concave, Lipschitz and non-decreasing, then $\varphi\circ g$ is a $p$-superminimizer
\item[iii)] If $g$ is a $p$-subminimizer and $\varphi$ is convex, Lipschitz and non-decreasing, then $\varphi\circ g$ is a $p$-subminimizer
\item[iv)] If $g$ is a $p$-subminimizer and $\varphi$ is concave, Lipschitz and non-increasing, then $\varphi\circ g$ is a $p$-superminimizer
\end{itemize}
\end{proposition}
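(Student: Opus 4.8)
The plan is to reduce all four statements, via the characterization in Theorem~\ref{thm:SuperSolution}, to a single weighted integral inequality for $g$, and then to prove that inequality by testing the (sub/super)minimality of $g$ against cut-off multipliers of the form $\Theta(g)\tilde f$.

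\emph{Reduction.} I treat case (i); the others are identical up to signs. First extend $\varphi$ to a Lipschitz convex non-increasing function on $\R$, so that $\varphi\circ g\in\s^p(\Omega)$ by \eqref{eq:composition}. By Theorem~\ref{thm:SuperSolution} it is enough to show $\int_\Omega D^+f(\nabla(\varphi\circ g))\weakgrad{(\varphi\circ g)}^{p-2}\,\d\mm\ge 0$ for every non-positive $f\in\test\Omega$. Since $\varphi'\le 0$, the chain rule \eqref{eq:ChainRuleg} gives $D^+f(\nabla(\varphi\circ g))=(\varphi'\circ g)D^-f(\nabla g)$, while \eqref{eq:composition} gives $\weakgrad{(\varphi\circ g)}=|\varphi'\circ g|\weakgrad g$; together with \eqref{eq:sign} the integrand becomes $|\varphi'\circ g|^{p-1}D^+(-f)(\nabla g)\weakgrad g^{p-2}$ with $-f\ge 0$. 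Thus (i) reduces to
\[
\int_\Omega w(g)^{p-1}\,D^+\tilde f(\nabla g)\,\weakgrad g^{p-2}\,\d\mm\ge 0,\qquad \forall\,\tilde f\in\test\Omega,\ \tilde f\ge 0,
\]
where $w:=|\varphi'|\ge0$ is non-increasing because $\varphi$ is convex. The same computation reduces (ii) to the identical inequality (now $w=\varphi'\ge0$ non-increasing, $\varphi$ being concave), and reduces (iii)--(iv) to the analogous inequality for a $p$-\emph{sub}minimizer $g$, tested against $\tilde f\le 0$ and with $w\ge0$ non-\emph{de}creasing.

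\emph{Core inequality for Lipschitz weights.} I next prove, for $g$ a $p$-superminimizer, that $\int_\Omega \Theta(g)D^+\tilde f(\nabla g)\weakgrad g^{p-2}\,\d\mm\ge 0$ whenever $\Theta\ge0$ is bounded, non-increasing and Lipschitz and $\tilde f\in\test\Omega$ is non-negative. Indeed $\Theta(g)\tilde f$ is a non-negative element of $\s^p(X,\sfd,\mm)$ with compact support in $\Omega$, so characterization (iii) of Theorem~\ref{thm:SuperSolution} yields $\int_\Omega D^+(\Theta(g)\tilde f)(\nabla g)\weakgrad g^{p-2}\,\d\mm\ge0$. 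The Leibniz rule (Proposition~\ref{prop:Leibniz}) applied to $f_1=\Theta(g)\ge0$, $f_2=\tilde f\ge0$, together with \eqref{eq:ChainRulef} and \eqref{eq:Dg2} (which give $D^+(\Theta\circ g)(\nabla g)=(\Theta'\circ g)\weakgrad g^2\le0$), yields the pointwise bound $D^+(\Theta(g)\tilde f)(\nabla g)\le\Theta(g)D^+\tilde f(\nabla g)+\tilde f(\Theta'\circ g)\weakgrad g^2$. Multiplying by $\weakgrad g^{p-2}\ge0$ and integrating gives
\[
0\le \int_\Omega\Theta(g)D^+\tilde f(\nabla g)\weakgrad g^{p-2}\,\d\mm+\int_\Omega \tilde f\,(\Theta'\circ g)\,\weakgrad g^{p}\,\d\mm,
\]
and the last term is $\le0$ because $\tilde f\ge0$ and $\Theta'\le0$; this proves the claim.

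\emph{Approximation and conclusion.} The weight $w^{p-1}$ is bounded and non-increasing but in general only monotone, hence not Lipschitz. I approximate it from below by bounded non-increasing Lipschitz functions $\Theta_n\uparrow w^{p-1}$ (for instance the inf-convolutions $\Theta_n(s)=\inf_t\{w(t)^{p-1}+n|s-t|\}$, which are readily checked to be non-increasing), converging pointwise to $w^{p-1}$ off its countable jump set. By \eqref{Schwartz} the integrand obeys $|D^+\tilde f(\nabla g)|\weakgrad g^{p-2}\le\weakgrad{\tilde f}\weakgrad g^{p-1}\in L^1(\Omega)$ and vanishes on $\{\weakgrad g=0\}$, in particular on every level set $\{g=c\}$ (where $\weakgrad g=0$ $\mm$-a.e. by \eqref{eq:LocalityWG}), so the jump set of $w^{p-1}$ carries no mass; dominated convergence then transfers the inequality of the previous paragraph from $\Theta_n$ to $w^{p-1}$, establishing the reduced inequality and hence (i)--(ii). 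Cases (iii)--(iv) follow verbatim, testing the $p$-subminimality of $g$ against $\Theta(g)f$ with $f\le0$ and $\Theta\ge0$ non-decreasing, so that the Leibniz error term $\int_\Omega f(\Theta'\circ g)\weakgrad g^p\,\d\mm$ is again $\le0$. The one genuinely delicate point is this last paragraph: because a monotone weight need not be Sobolev, it cannot be inserted directly inside $D^+$, and the whole argument hinges on realizing it as a limit of Lipschitz multipliers while checking that the error term produced by the Leibniz rule always has the favourable sign.
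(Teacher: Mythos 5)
Your proof is correct, and while its engine is the same as the paper's --- test the (sub/super)minimality of $g$ against $f$ times a Lipschitz function of $g$ built from $\varphi'$, split with the Leibniz rule, identify the error term via the chain rule \eqref{eq:ChainRulef} and \eqref{eq:Dg2}, and observe that convexity/monotonicity gives it the favourable sign --- the way you handle the non-smoothness of $\varphi$ is genuinely different. The paper keeps the composition $\varphi\circ g$ inside the nonlinear objects, first proves the statement for $\varphi\in C^2$ with bounded second derivative (so that $|\varphi'|^{p-2}\varphi'$ is an admissible Lipschitz multiplier and the error term is explicitly $-(p-1)f|\varphi'\circ g|^{p-2}\varphi''\circ g\weakgrad g^p$), and then passes to general $\varphi$ by mollification, invoking the upper semicontinuity of $g\mapsto \int D^+f(\nabla g)\weakgrad g^{p-2}\,\d\mm$ from \eqref{eq:semicont} applied to $g_n=\varphi_n\circ g$. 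You instead first push $\varphi$ entirely out of the objects $D^\pm$ and $\weakgrad{\cdot}$ via the chain rules, so that it survives only as the scalar weight $|\varphi'\circ g|^{p-1}$ multiplying the integrand; you then regularize that monotone weight by inf-convolution and close the argument with dominated convergence, using the locality property \eqref{eq:LocalityWG} (together with the convention that $D^\pm f(\nabla g)$ vanishes on $\{\weakgrad g=0\}$) to dispose of the countable jump set of $\varphi'$. What your route buys is the avoidance of the semicontinuity property \eqref{eq:semicont}, which is one of the less elementary tools in Proposition \ref{prop:ConvHomog}, and of any discussion of whether $t\mapsto|t|^{p-2}t$ composed with $\varphi'$ is Lipschitz; what it costs is the extra bookkeeping about the jump set, which you correctly identify and resolve. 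Both proofs rely on the same characterizations (ii)/(ii') and (iii)/(iii') of Theorem \ref{thm:SuperSolution}, and your sign analysis in all four cases checks out.
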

\begin{proof}
We will prove only (i), the proof of the other points being similar. Since $\varphi$ is Lipschitz, $\varphi\circ g\in\s^p(\Omega)$. Assume for a moment that $\varphi$ is $C^2$ with bounded second derivative, so that $\varphi'\circ g\in\s^p(\Omega)$. Let $f\in\test\Omega$ be non-positive and use the chain rules \eqref{eq:composition}, \eqref{eq:ChainRuleg} and the Leibniz rule in Lemma \ref{le:leiblip}  to get
\begin{equation}
\label{eq:a}
\begin{split}
D^+f(\nabla(\varphi\circ g))&\weakgrad{(\varphi\circ g)}^{p-2}\\
&=\big|\varphi'\circ g\big|^{p-2}\varphi'\circ g\,D^-f(\nabla g)\weakgrad{g}^{p-2}\\
&\geq  D^+(f \big|\varphi'\circ g\big|^{p-2}\varphi'\circ g)(\nabla g)\weakgrad g^{p-2}-fD^-( \big|\varphi'\circ g\big|^{p-2}\varphi'\circ g)(\nabla g)\weakgrad g^{p-2}\\
&= D^+(f \big|\varphi'\circ g\big|^{p-2}\varphi'\circ g)(\nabla g)\weakgrad g^{p-2}-(p-1)f\big|\varphi'\circ g\big|^{p-2}\varphi''\circ g\weakgrad g^p,
\end{split}
\end{equation}
where in the last equality we used first the chain rule \eqref{eq:ChainRulef} with $g$ in place of $f$ and $|\varphi'|^{p-2}\varphi'$ in place of $\varphi$, and then the identity \eqref{eq:Dg2}.

Since $f\leq 0$ and $\varphi''\geq 0$ we have
\begin{equation}
\label{eq:b}           
-(p-1)f\big|\varphi'\circ g\big|^{p-2}\varphi''\circ g\weakgrad g^p\geq 0.
\end{equation}
Also, the function $\psi(z):=|\varphi'(z)|^{p-2}\varphi'(z)$ is Lipschitz and thus by \eqref{eq:composition} we know that $\big|\varphi'\circ g\big|^{p-2}\varphi'\circ g$ is in $\s^p(\Omega)$, hence $f\big|\varphi'\circ g\big|^{p-2}\varphi'\circ g$ is in $\s^p(\Omega)$ as well and has compact support in $\Omega$ and the assumptions $f\leq 0$, $\varphi'\leq 0$ ensure that this function is non-negative. By  $(iii)$ of Theorem \ref{thm:SuperSolution} with $f\big|\varphi'\circ g\big|^{p-2}\varphi'\circ g$ in place of $f$ we deduce
\begin{equation}
\label{eq:c}
\int_\Omega D^+(f\big|\varphi'\circ g\big|^{p-2}\varphi'\circ g)(\nabla g)\weakgrad g^{p-2}\,\d\mm\geq 0.
\end{equation}
Thus in this case the thesis follows integrating \eqref{eq:a}, using \eqref{eq:b} and \eqref{eq:c} and then recalling Theorem \ref{thm:SuperSolution}.

Now we consider the general case where $\varphi$ is not necessarily $C^2$. With a simple smoothing argument we can find a sequence  $(\varphi_n)\subset C^2(I)$ such that: each $\varphi_n$ has  bounded second derivative, the $\varphi_n$'s are  uniformly Lipschitz, convex and non-increasing and satisfy
\begin{equation}
\label{eq:limitebus}
\lim_{n\to\infty}\varphi'_n(z)=\varphi'(z),\qquad\textrm{ for $\mathcal L^1\restr I$-a.e. $z$}.
\end{equation}
By what we previously proved we know that 
\begin{equation}
\label{eq:n}
\int_\Omega D^+f(\nabla(\varphi_n\circ g))\weakgrad{(\varphi_n\circ g)}^{p-2}\,\d\mm\geq 0,\qquad\forall n\in\N, \  f\in\test\Omega,\ f\leq 0.
\end{equation}
We claim that $\int_\Omega\weakgrad{(\varphi\circ g-\varphi_n\circ g)}^p\,\d\mm\to 0$ as $n\to\infty$. Indeed, the uniform Lipschitz property of the $\varphi_n$'s ensures that for some $L>0$ it holds $\weakgrad{(\varphi_n\circ g)}\leq L\weakgrad g$, so that the sequence $\weakgrad{(\varphi\circ g-\varphi_n\circ g)}^p$ is dominated, while the chain rule \eqref{eq:composition} and \eqref{eq:limitebus}  yield that  $\weakgrad{(\varphi\circ g-\varphi_n\circ g)}\to0$ $\mm$-a.e.. Thus the claim follows by the dominated convergence theorem. 

The upper semicontinuity property stated in \eqref{eq:semicont} with $\varphi_n\circ g$ in place of $g_n$ and $\varphi\circ g$ in place of $g$ together with \eqref{eq:n} gives
\[
\int_\Omega D^+f(\nabla(\varphi\circ g))\weakgrad{(\varphi\circ g)}^{p-2}\,\d\mm\geq \lims_{n\to\infty}\int_\Omega D^+f(\nabla(\varphi_n\circ g))\weakgrad{(\varphi_n\circ g)}^{p-2}\,\d\mm\geq 0,
\]
for any non-positive $f\in\test\Omega$, as desired.
\end{proof}

\subsection{Harmonicity of the Busemann function associated to a line on infi\-ni\-tesi\-mally Hilbertian $CD(0,N)$ spaces}\label{se:bus}
A crucial step in the proof of the splitting theorem on Riemannian manifolds with non negative Ricci curvature is the fact that the Busemann function associated to a line is harmonic. This is a consequence of the strong maximum principle applied to the function $\b^++\b^-$, where $\b^\pm$ are the Busemann functions associated to the respective semi-lines, which has minima along the line itself and satisfies $\Delta(\b^++\b^-)\leq 0$ (see below for the definitions).

In \cite{Gigli12} it has been proved that the Busemann function associated to a semi-line on $CD(0,N)$ spaces has non-positive Laplacian, but the proof  that it is harmonic (if associated to a line) was not completed due to the lack of a strong maximum principle. Here we complete this step relying on the fact that the strong maximum principle is indeed known to be true on doubling spaces supporting a Poincar\'e inequality and on the PDE characterization of superminimizers that we just proved.

We recall the following result proved in \cite{BjornBjorn11} (see Theorem 9.13). Notice that the formulation we are giving here is weaker than the one stated in \cite{BjornBjorn11}, but sufficient for our purposes.
\begin{theorem}[Strong maximum principle]\label{thm:maxpr}
Let $(X,\sfd,\mm)$ be as in \eqref{eq:mms} and supporting a 2-Poincar\'e inequality \eqref{LPI}. Let $g:X\to\R$ be a lower semicontinuous function in $\s^2_{loc}(X,\sfd,\mm)$ with the following property: for any $\Omega\subset X$ open with compact closure it holds
\[
\int_\Omega\weakgrad g^2\,\d\mm\leq\int_\Omega\weakgrad{(g+f)}^2\,\d\mm,\qquad\forall f\in\test\Omega,\ f\geq 0.
\]

Assume that $g$ has a minimum. Then $g$ is constant.
\end{theorem}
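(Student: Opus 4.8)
The statement is the strong minimum principle for $2$-superminimizers, and the route I would follow is the classical topological dichotomy (nonempty $+$ closed $+$ open in a connected space), with the weak Harnack inequality for nonnegative supersolutions supplying the analytic core. This is exactly the tool that becomes available once one assumes doubling together with the Poincar\'e inequality \eqref{LPI}.

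First I would normalize. Since $\weakgrad{(g+c)}=\weakgrad g$ for every constant $c$, the superminimizer inequality in the hypothesis is invariant under adding constants, so I may set $u:=g-\min_X g$. Then $u\ge 0$, $u$ is lower semicontinuous, $u\in\s^2_{loc}(X,\sfd,\mm)$, $u$ satisfies the superminimizer inequality on every bounded open set, and $u$ attains the value $0$. The goal becomes to show $u\equiv 0$. Put $A:=\{u=0\}$. Since $u$ is lower semicontinuous and nonnegative, $A=\{u\le 0\}$ is closed, and it is nonempty because the minimum is attained. As a complete doubling space supporting a Poincar\'e inequality is connected (indeed quasiconvex), it suffices to prove that $A$ is open.

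To prove openness I would fix $x_0\in A$ and invoke the weak Harnack inequality for the nonnegative superminimizer $u$: there are constants $C>0$, $s>0$ and $\tau\ge1$ such that
\[
\left(\frac{1}{\mm(B_r(x_0))}\int_{B_r(x_0)}u^s\,\d\mm\right)^{1/s}\le C\,\mathop{\rm ess\,inf}_{B_r(x_0)}u,
\]
valid for every $r>0$ (using that $u$ is a superminimizer on the dilated ball $B_{\tau r}(x_0)$, which here is contained in $X$). The decisive step is to show the right-hand side vanishes: using that the lower semicontinuous superminimizer $u$ agrees with its regularization $u(x)=\mathop{\rm ess\,lim\,inf}_{y\to x}u(y)$, evaluation at $x_0$ gives $\mathop{\rm ess\,inf}_{B_r(x_0)}u\le u(x_0)=0$, and $u\ge0$ forces equality. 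Hence $u=0$ $\mm$-a.e.\ on $B_r(x_0)$, and applying the regularization identity once more yields $u\equiv0$ pointwise on $B_r(x_0)$, so $B_r(x_0)\subset A$. Thus $A$ is open, and being also nonempty and closed in the connected space $X$ we conclude $A=X$, i.e.\ $g$ is constant.

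The genuinely hard, nonlinear input is the weak Harnack inequality for nonnegative $2$-superminimizers, which rests on the De Giorgi--Nash--Moser iteration carried out in the doubling-and-Poincar\'e setting (precisely what \cite{BjornBjorn11} packages); everything else is soft. The one delicate point I would flag is the passage from the pointwise minimum value to the essential infimum on small balls: this uses that $g$ coincides with its $\mathop{\rm ess\,lim\,inf}$-regularization (the canonical superharmonic representative), a property that is automatic in the case relevant to us, where $g=\b^++\b^-$ is continuous, so that no measure-zero pathology at the minimum point can occur.
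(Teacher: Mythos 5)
The paper does not actually prove this theorem: it is quoted, in an explicitly weakened form, from \cite{BjornBjorn11} (Theorem 9.13), so there is no internal proof to compare against. Your argument is precisely the standard proof of that cited result --- normalize to a nonnegative superminimizer $u$ attaining the value $0$, note that $\{u=0\}$ is closed and nonempty, get connectedness of $X$ from doubling plus Poincar\'e via quasiconvexity, and get openness of $\{u=0\}$ from the weak Harnack inequality for nonnegative superminimizers --- and as a reconstruction of what the citation hides it is correct and complete in outline.

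The point you flag at the end is, however, more than a delicacy: as literally stated the theorem is false, and your proof correctly locates where it breaks. On $(\R,|\cdot|,\mathcal L^1)$ the function $g$ equal to $1$ everywhere except $g(0)=0$ is lower semicontinuous, lies in $\s^2_{loc}$ with $\weakgrad g=0$ (test plans do not see a single point, since $(\e_t)_\sharp\ppi\leq C\mm$), is a superminimizer on every bounded open set, and attains its minimum at $0$, yet is not constant. The missing hypothesis is exactly the one you identify: $g$ must coincide with its $\mathop{\rm ess\,lim\,inf}$-regularization, i.e.\ be superharmonic in the sense of \cite{BjornBjorn11}, which is the actual hypothesis of their Theorem 9.13. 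Without it the step $\mathop{\rm ess\,inf}_{B_r(x_0)}u\leq u(x_0)=0$ has no justification and fails in the example above. Since the paper only applies the theorem to the Lipschitz function $\b^++\b^-$, its application is unaffected, but both your proof and the statement should carry the regularization (or continuity) hypothesis explicitly rather than mere lower semicontinuity.
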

We will apply the strong maximum principle on $CD(0,N)$ spaces, whose definition is recalled below.

Given a complete separable metric space $(X,\sfd)$ endowed with a non negative Radon measure $\mm$ finite on bounded sets and a number $N\in(1,\infty)$, we consider the functional $\mathcal U_N$ defined on the space of probability measures with bounded support as:
\[
\mathcal U_N(\mu):=-\int\rho^{1-\frac1N}\,\d\mm,\qquad\mu=\rho\mm+\mu^s,\ \mu^s\perp\mm
\] 
\begin{definition}[$CD(0,N)$ spaces]
Let $(X,\sfd)$ be a complete separable metric space endowed with a non negative Radon measure $\mm$ finite on bounded sets and $N\in(1,\infty)$. We say that $(X,\sfd,\mm)$ is a $CD(0,N)$ space provided for every two probability measures with bounded support $\mu,\nu$ on $X$ there exists a $W_2$-geodesic $(\mu_t)$ connecting them such that
\[
\mathcal U_{N'}(\mu_t)\leq(1-t)\mathcal U_{N'}(\mu)+t\mathcal U_{N'}(\nu),\qquad\forall t\in[0,1],
\]
holds for any $N'\geq N$.
\end{definition}
The fact that we can apply the maximum principle in Theorem \ref{thm:maxpr} to $CD(0,N)$ space is ensured by the following proposition, see \cite{Lott-Villani09} and \cite{Sturm06II} for the proof of the doubling property and \cite{Rajala11} for the proof of the Poincar\'e inequality (see also \cite{Lott-Villani-Poincare} for the original argument on non-branching spaces).
\begin{proposition}[$CD(0,N)$ implies doubling and Poincar\'e]\label{prop:cddoubpoi}
Let $N\in(1,\infty)$ and $(X,\sfd,\mm)$ a $CD(0,N)$ space. Then $(\supp(\mm),\sfd,\mm)$ is doubling and supports a 1-Poincar\'e inequality (and thus a fortiori a 2-Poincar\'e inequality). 
\end{proposition}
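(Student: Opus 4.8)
The plan is to read off both assertions from the displacement convexity of $\mathcal{U}_{N'}$ that defines $CD(0,N)$; throughout one works inside $\supp(\mm)$, so I may as well assume $\supp(\mm)=X$. The single elementary ingredient I will use repeatedly is Jensen's inequality for the concave map $s\mapsto s^{1-1/N}$: if $\mu=\rho\mm+\mu^s$ is a probability measure concentrated on a Borel set $A$ with $\mm(A)<\infty$, then
\begin{equation}
\label{eq:jensenbound}
\mathcal{U}_{N}(\mu)=-\int\rho^{1-\frac1N}\,\d\mm\ \geq\ -\,\mm(A)^{\frac1N},
\end{equation}
since $\int_A\rho^{1-1/N}\,\d\mm\leq\mm(A)\big(\mm(A)^{-1}\int_A\rho\,\d\mm\big)^{1-1/N}\leq\mm(A)^{1/N}$.

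First I would establish doubling through a Bishop--Gromov / Brunn--Minkowski argument. Fix $x_0\in X$ and $r>0$, and take $\mu_0:=\delta_{x_0}$ and $\mu_1:=\mm(B_{2r}(x_0))^{-1}\mm\restr{B_{2r}(x_0)}$. A direct computation gives $\mathcal{U}_N(\mu_1)=-\mm(B_{2r}(x_0))^{1/N}$, while $\mathcal{U}_N(\mu_0)\leq0$ (indeed $\mathcal{U}_N\leq0$ on every probability measure). Let $(\mu_t)$ be a $W_2$-geodesic joining them as granted by $CD(0,N)$ with $N'=N$; since $\mu_0$ is a Dirac mass, $(\mu_t)$ is carried by geodesics emanating from $x_0$, so its midpoint $\mu_{1/2}$ is concentrated on $\{\gamma_{1/2}:\gamma_0=x_0,\ \gamma_1\in B_{2r}(x_0)\}\subset B_r(x_0)$. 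Feeding \eqref{eq:jensenbound} with $A=B_r(x_0)$ into the convexity inequality yields
\[
-\,\mm(B_r(x_0))^{\frac1N}\ \leq\ \mathcal{U}_N(\mu_{1/2})\ \leq\ \tfrac12\mathcal{U}_N(\mu_0)+\tfrac12\mathcal{U}_N(\mu_1)\ \leq\ -\tfrac12\,\mm(B_{2r}(x_0))^{\frac1N},
\]
whence $\mm(B_{2r}(x_0))\leq 2^{N}\mm(B_r(x_0))$, i.e.\ doubling with constant $2^{N}$.

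For the $1$-Poincar\'e inequality I would run the classical oscillation-along-geodesics scheme. Bounding $\int_{B_r}|f-f_{B_r(x)}|\,\d\mm\leq\mm(B_r(x))^{-1}\iint_{B_r\times B_r}|f(y)-f(z)|\,\d\mm(y)\,\d\mm(z)$ and, for each pair, using a constant-speed geodesic $\gamma^{y,z}$ to write $|f(y)-f(z)|\leq\sfd(y,z)\int_0^1|Df|(\gamma^{y,z}_t)\,\d t\leq 2r\int_0^1|Df|(\gamma^{y,z}_t)\,\d t$, the problem reduces to dominating the occupation measure $\iint\!\int_0^1\delta_{\gamma^{y,z}_t}\,\d t\,\d\mm(y)\,\d\mm(z)$ by a multiple of $\mm\restr{B_{\lambda r}(x)}$. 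This is precisely what $CD(0,N)$ provides: applying the convexity of $\mathcal{U}_{N'}$ to uniform measures on small balls together with \eqref{eq:jensenbound} gives uniform $L^\infty$-bounds on the densities of the $W_2$-interpolants, which after a jointly measurable selection of the geodesics and integration in the endpoints control the occupation measure and produce the inequality with explicit $C_{PI}$ and $\lambda$.

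I expect the last step of the Poincar\'e argument to be the genuine obstacle: one must select $\gamma^{y,z}$ measurably in $(y,z)$ and bound the density of the resulting time-averaged distribution \emph{uniformly}, and branching of geodesics in a general $CD(0,N)$ space blocks the naive Jacobian estimate available in the smooth or non-branching case. In the non-branching setting this is the content of \cite{Lott-Villani-Poincare}; the branching case requires the refined optimal-transport construction and interpolation-density estimate of \cite{Rajala11}, which is the substantive input. By contrast the doubling bound is the elementary comparison displayed above, as in \cite{Lott-Villani09,Sturm06II}.
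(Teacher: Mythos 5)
Your proposal is correct and matches the paper's treatment: the paper offers no proof of this proposition, deferring to \cite{Lott-Villani09}, \cite{Sturm06II} for doubling and to \cite{Rajala11} (and \cite{Lott-Villani-Poincare} in the non-branching case) for the $1$-Poincar\'e inequality, which is exactly where you place the substantive input for the Poincar\'e half. Your doubling argument (Dirac mass versus normalized $\mm\restr{B_{2r}(x_0)}$, midpoint supported in $B_r(x_0)$ because every $W_2$-geodesic from a Dirac mass is carried by geodesics emanating from $x_0$, Jensen for $s\mapsto s^{1-1/N}$, yielding $\mm(B_{2r}(x_0))\le 2^N\mm(B_r(x_0))$) is a correct, self-contained rendering of the standard Brunn--Minkowski/Bishop--Gromov comparison that the cited references carry out, so you have in fact proved more than the paper records.
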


Given a metric space $(X,\sfd)$, a curve $\gamma:[0,\infty)\to X$ is said a half line provided it holds
\[
\sfd(\gamma_t,\gamma_s)=|s-t|,\qquad\forall t,s\geq 0.
\]
The Busemann function $\b:X\to\R$ associated to an half line $\gamma$ is defined as
\[
\b(x):=\inf_{t\geq 0}\sfd(\gamma_t,x)-t=\lim_{t\to+\infty}\sfd(\gamma_t,x)-t.
\]
The fact that the $\inf$ is equal to the $\lim$ is a consequence of the triangle inequality, which also ensures that $\b$ never takes the value $-\infty$.

The following result has been proved in \cite{Gigli12}, see Proposition 5.19.
\begin{proposition}[Laplacian comparison for the Busemann function on $CD(0,N)$ spaces]\label{prop:lapbus}
Let $N\in (1,\infty)$, $(X,\sfd,\mm)$ a 2-infinitesimally strictly convex $CD(0,N)$ space and assume furthermore that $W^{1,2}(X,\sfd,\mm)$ is uniformly convex. 

Assume that there exists an half-line $\gamma:[0,\infty)\to \supp(\mm)$ and let $\b$ be the Busemann function associated to it. 

Then $\b\in D(\bd, X)$ and denoting by $\mu$ the only measure in $\bd\b$, it holds $\mu\leq 0$.
\end{proposition}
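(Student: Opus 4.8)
The plan is to realize $\b$ as a monotone limit of shifted distance functions and to transfer a one-sided Laplacian bound for each of them to the limit, where the bound degenerates to $0$. Set $\b_t(x):=\sfd(\gamma_t,x)-t$ for $t\geq0$. Since $\sfd(\gamma_{t+s},\gamma_t)=s$, the triangle inequality shows that $t\mapsto\b_t(x)$ is non-increasing and converges to $\b(x)$ for every $x$, and by definition $\b=\lim_t\b_t$. Each $\b_t$ is $1$-Lipschitz, hence $\weakgrad{\b_t}\leq1$, and in fact $\weakgrad{\b_t}=1$ $\mm$-a.e.\ as for any distance function on a doubling\&Poincar\'e space; the same eikonal identity $\weakgrad\b=1$ $\mm$-a.e.\ is available for the Busemann limit.

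The first, and deepest, step is a Laplacian comparison for the $\b_t$'s: for each fixed $t$ one should have $\b_t\in D(\bd,X)$ with
\[
\bd\b_t\leq\frac{N-1}{\sfd(\gamma_t,\cdot)}\,\mm.
\]
This is the genuinely geometric input, and is where the $CD(0,N)$ assumption enters. I would obtain it by the \emph{horizontal} optimal-transport differentiation of the entropy $\mathcal U_{N'}$ along $W_2$-geodesics directed towards $\gamma_t$: the $N$-convexity of $\mathcal U_{N'}$ produces exactly the one-sided Jacobian estimate that integrates to the displayed inequality. Here the $2$-infinitesimal strict convexity guarantees that $\bd\b_t$ is single valued (Remark \ref{re:unique}), and the uniform convexity of $W^{1,2}(X,\sfd,\mm)$ is what makes the relevant gradient selection and vertical differentiation well posed. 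I expect this to be the main obstacle; everything afterwards is comparatively soft.

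The second step passes to the limit $t\to\infty$. Fix a non-negative $f\in\test X$ with compact support $K$. Because the space is $2$-infinitesimally strictly convex, $D^+=D^-=:D$, so the definition of $\bd\b_t$ gives
\[
-\int_X Df(\nabla\b_t)\,\d\mm=\int_X f\,\d(\bd\b_t)\leq\int_X f\,\frac{N-1}{\sfd(\gamma_t,\cdot)}\,\d\mm.
\]
On $K$ one has $\sfd(\gamma_t,x)\geq t-\max_{x\in K}\sfd(\gamma_0,x)\to\infty$, so the right-hand side tends to $0$. To treat the left-hand side I would first upgrade the pointwise monotone convergence $\b_t\to\b$ to strong convergence in $W^{1,2}_{loc}$: the $\b_t$ are bounded in $W^{1,2}_{loc}$ and converge in $L^2_{loc}$ (dominated convergence), while $\weakgrad{\b_t}=\weakgrad\b=1$ $\mm$-a.e.\ makes the Dirichlet energies converge; a bounded sequence in the reflexive space $W^{1,2}$ converging in $L^2$ converges weakly to $\b$, and weak convergence together with convergence of norms yields strong convergence by the Kadec--Klee property of the uniformly convex space $W^{1,2}$. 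With strong $W^{1,2}_{loc}$ convergence in hand, the upper semicontinuity of $g\mapsto\int D^+f(\nabla g)\,\d\mm$ in \eqref{eq:semicont} applies along any sequence $t_n\to\infty$ and gives
\[
-\int_X Df(\nabla\b)\,\d\mm\leq\liminf_{t\to\infty}\Big(-\int_X Df(\nabla\b_t)\,\d\mm\Big)\leq0.
\]

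Finally, since $D^-f(\nabla\b)=Df(\nabla\b)$, the inequality $-\int_X D^-f(\nabla\b)\,\d\mm\leq0$ holds for every non-negative $f\in\test X$. Proposition \ref{prop:comparison}, applied with $h\equiv1$ and comparison measure $\tilde\mu=0$, then yields $\b\in D(\bd,X)$ together with $\mu\leq0$ for the unique $\mu\in\bd\b$ (uniqueness by Remark \ref{re:unique}), which is the claim. The only delicate points are the geometric comparison of the second paragraph and the strong $W^{1,2}_{loc}$ convergence of the third; the latter is precisely what the uniform convexity hypothesis is there to provide, while the former is the content imported from the $CD(0,N)$ structure.
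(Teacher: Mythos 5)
The paper itself does not prove this proposition: it is imported from \cite{Gigli12} (Proposition 5.19 there), so there is no internal argument to compare against. Your outline reproduces the architecture of the cited proof faithfully: approximate $\b$ by the shifted distance functions $\b_t=\sfd(\gamma_t,\cdot)-t$, invoke the Laplacian comparison $\bd\b_t\le\frac{N-1}{\sfd(\gamma_t,\cdot)}\,\mm$ supplied by the $CD(0,N)$ condition, observe that the right-hand side vanishes on compact sets as $t\to\infty$, and transfer the resulting inequality to the limit via strong $L^2_{loc}$ convergence of the gradients, the upper semicontinuity \eqref{eq:semicont}, and Proposition \ref{prop:comparison}. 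The roles you assign to the hypotheses (2-infinitesimal strict convexity for single-valuedness of $Df(\nabla g)$ and of $\bd$, uniform convexity of $W^{1,2}$ for the Kadec--Klee upgrade from weak to strong convergence) are the correct ones, and the final passage through Proposition \ref{prop:comparison} with $\tilde\mu=0$ is sound.

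As a self-contained proof, however, the attempt has two genuine holes. First, the comparison $\bd\b_t\le\frac{N-1}{\sfd(\gamma_t,\cdot)}\,\mm$ is the main theorem of \cite{Gigli12}; a one-sentence appeal to ``horizontal differentiation of the entropy'' does not establish it, and everything downstream depends on it (you do acknowledge this). Second, and less visibly, your Kadec--Klee step silently requires the eikonal identity $\weakgrad{\b}=1$ $\mm$-a.e.\ for the \emph{limit} function: without it the local Dirichlet energies $\int_\Omega\weakgrad{\b_t}^2\,\d\mm=\mm(\Omega)$ do not converge to $\int_\Omega\weakgrad{\b}^2\,\d\mm$, the norms do not converge, and weak convergence cannot be upgraded to strong. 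This identity is not a consequence of doubling and Poincar\'e alone, contrary to what you assert; the inequality $\weakgrad{\b}\le 1$ is free from Lipschitzianity, but the lower bound requires test plans concentrated on curves along which $\b$ decreases at unit speed (rays asymptotic to $\gamma$), a nontrivial construction that uses the geodesic structure of $CD(0,N)$ spaces and is carried out in \cite{Gigli12}. A smaller wrinkle: uniform convexity is assumed for $W^{1,2}(X,\sfd,\mm)$, whereas your Radon--Riesz argument runs on bounded sets (the $\b_t$ are not globally in $L^2$), so you should either argue that the localized spaces inherit the relevant property or rework the step with cutoffs. The strategy is the right one, but these ingredients must be supplied before the argument closes.
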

Given a metric space $(X,\sfd)$, a curve $\gamma:\R\to X$ is said to be a line provided it holds
\[
\sfd(\gamma_t,\gamma_s)=|s-t|,\qquad\forall t,s\in\R.
\]
To a line we can associate two Busemann functions $\b^+,\b^-$ according to whether the limit is taken as $t\to+\infty$ or $t\to-\infty$:
\[
\begin{split}
\b^+(x)&:=\inf_{t\geq 0}\sfd(\gamma_t,x)-t=\lim_{t\to+\infty}\sfd(\gamma_t,x)-t,\\
\b^-(x)&:=\inf_{t\geq 0}\sfd(\gamma_{-t},x)-t=\lim_{t\to+\infty}\sfd(\gamma_{-t},x)-t.
\end{split}
\]
Thanks to the maximum principle, we can obtain the following result.
\begin{theorem}[The Busemann function is harmonic on infinitesimally Hilbertian $CD(0,N)$ spaces] Let $(X,\sfd,\mm)$ be an infinitesimally Hilbertian $CD(0,N)$ space, $\gamma:\R\to X$ a line and $\b^\pm$ the Busemann functions associated to it. 

Then $\b^+=-\b^-$ on $\supp(\mm)$. In particular, denoting by $\mu^\pm$ the only measures in $\bd\b^\pm$,  it holds $\mu^+=\mu^-=0$.
\end{theorem}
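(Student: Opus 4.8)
The plan is to reduce everything to the single scalar function $u:=\b^++\b^-$ and to show it is a non-negative $2$-superminimizer attaining its minimum, so that the strong maximum principle of Theorem \ref{thm:maxpr} forces it to be constant, hence identically zero. Throughout I would work on $(\supp(\mm),\sfd,\mm)$, which by Proposition \ref{prop:cddoubpoi} is doubling and supports a $2$-Poincar\'e inequality (in fact a $p_0$-Poincar\'e for every $p_0\ge 1$). Being infinitesimally Hilbertian, it is in particular $2$-infinitesimally strictly convex by Proposition \ref{pro:InfHilbSymm}, and $W^{1,2}(X,\sfd,\mm)$ is a Hilbert space, whence uniformly convex. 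Thus all the hypotheses needed to invoke Proposition \ref{prop:lapbus} and Corollary \ref{cor:SupSolSIC} are in force.

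First I would record the two elementary geometric facts about $u$. Each $\b^\pm$ is $1$-Lipschitz, being an infimum of the $1$-Lipschitz maps $x\mapsto\sfd(\gamma_{\pm t},x)-t$, so $u$ is Lipschitz, in particular continuous and lower semicontinuous, and lies in $\s^2_{loc}$. The triangle inequality $\sfd(\gamma_t,x)+\sfd(\gamma_{-t},x)\ge\sfd(\gamma_t,\gamma_{-t})=2t$ gives $u\ge 0$ everywhere, while evaluating along the line yields $\b^+(\gamma_s)=-s$ and $\b^-(\gamma_s)=s$, so $u(\gamma_s)=0$ for every $s$. Hence $u$ attains its minimum value $0$.

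Next I would pin down the sign of the Laplacian of $u$. Applying Proposition \ref{prop:lapbus} separately to the two half-lines $t\mapsto\gamma_t$ and $t\mapsto\gamma_{-t}$ gives $\b^\pm\in D(\bd,X)$ with $\mu^\pm\le 0$, where $\mu^\pm$ is the only measure in $\bd\b^\pm$. Because the space is infinitesimally Hilbertian the Laplacian is linear in its argument: Proposition \ref{prop:ling} with $h\equiv 1$ and $p=2$ gives $u\in D(\bd,X)$ with $\bd u=\mu^++\mu^-\le 0$. By the global-to-local property \eqref{eq:globtoloc} the same holds on every open $\Omega$ with compact closure, and on such $\Omega$ the Lipschitz function $u$ belongs to $\s^2(\Omega)$; Corollary \ref{cor:SupSolSIC} then converts $\bd u\restr\Omega\le 0$ into the superminimizer inequality $\int_\Omega\weakgrad u^2\,\d\mm\le\int_\Omega\weakgrad{(u+f)}^2\,\d\mm$ for all non-negative $f\in\test\Omega$, which is exactly the hypothesis of Theorem \ref{thm:maxpr}.

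Finally I would invoke the strong maximum principle: $u$ is lower semicontinuous, lies in $\s^2_{loc}$, satisfies the superminimizer inequality on every relatively compact open set, and has a minimum, so Theorem \ref{thm:maxpr} forces $u$ to be constant on $\supp(\mm)$; since $u$ vanishes along the line, the constant is $0$ and $\b^+=-\b^-$ on $\supp(\mm)$. For the last assertion, linearity of the Laplacian again gives $\mu^-=\bd(-\b^+)=-\mu^+$, and combined with $\mu^+\le 0$ and $\mu^-\le 0$ this forces $\mu^+=\mu^-=0$. The only genuinely delicate points are checking that the abstract Laplacian comparison of Proposition \ref{prop:lapbus} indeed applies to both half-lines and that its conclusion combines linearly in the infinitesimally Hilbertian setting; everything else is a matter of assembling the cited results and verifying the mild regularity ($\s^2_{loc}$, lower semicontinuity, attainment of the minimum) required to feed $u$ into the maximum principle.
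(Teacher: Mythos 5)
Your proposal is correct and follows essentially the same route as the paper: apply Proposition \ref{prop:lapbus} to both half-lines, use linearity of the Laplacian on infinitesimally Hilbertian spaces (Proposition \ref{prop:ling} with $h\equiv 1$) to get $\bd(\b^++\b^-)\le 0$, localize and invoke Corollary \ref{cor:SupSolSIC} to obtain the superminimizer property, and conclude via the strong maximum principle of Theorem \ref{thm:maxpr} using $\b^++\b^-\ge 0$ with equality along the line. Your verification of the hypotheses of Proposition \ref{prop:lapbus} and the closing deduction $\mu^-=-\mu^+$ are fine and, if anything, slightly more explicit than the paper's own writeup.
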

\begin{proof}
By Proposition \ref{prop:lapbus} we know that $\b^+,\b^-\in D(\bd, X)$ with $\mu^+,\mu^-\leq 0$. Let $g:=\b^++\b^-$ and notice that since both $\b^+$ and $\b^-$ are Lipschitz, they both belong to $\s^2_{loc}(X,\sfd,\mm)$. Hence by Proposition \ref{prop:ling} (applied with $h\equiv1$) we have $g\in D(\bd, X)\cap\s^2_{loc}(X,\sfd,\mm)$ and the only measure $\mu$ in $\bd g$ satisfies $\mu=\mu^++\mu^-\leq 0$.

Since $g\in\s^2_{loc}(X,\sfd,\mm)$, for every open $\Omega\subset X$ with compact closure we have $g\in\s^2(\Omega)$ and the only measure in $\bd g\restr\Omega$ is $\mu\restr\Omega$. The inequality $\mu\leq 0$ gives $\mu\restr\Omega\leq 0$. By Proposition \ref{prop:cddoubpoi} we know that  $(\supp(\mm),\sfd,\mm)$ is doubling and supports a 2-Poincar\'e inequality,  therefore we can apply Corollary \ref{cor:SupSolSIC} to deduce that
\[
\int_\Omega\weakgrad g^2\,\d\mm\leq\int_\Omega\weakgrad{(g+f)}^2\,\d\mm,\qquad\forall\Omega\subset\subset X\textrm{ open},\  f\in\test\Omega,\ f\geq 0.
\]
Finally, $g$ has minimum in $\supp(\mm)$, because the triangle inequality gives $g\geq 0$ and by definition we have $g(\gamma_t)=0$ for any $t\in\R$. Hence we can apply Theorem \ref{thm:maxpr} to the space $(\supp(\mm),\sfd,\mm)$ and conclude.
\end{proof}
It is worth pointing out that there is a difference between what we are able to achieve in abstract spaces and what is true in the smooth Finsler setting. Indeed, here to apply the maximum principle we had to assume that the space is infinitesimally Hilbertian. This was needed to track the information on non-positivity of the Laplacian from $\b^\pm$ to $\b^++\b^-$. If the Laplacian is not linear, in general from $\bd \b^+\leq 0$ and $\bd\b^-\leq 0$ we can't deduce $\bd(\b^++\b^-)\leq 0$.

Yet, on smooth Finsler manifolds one has at disposal a maximum principle stronger than the one expressed in Theorem \ref{thm:maxpr}. Indeed, it is known that if $g_1,g_2$ are such that $\Delta g_1\leq 0$ and $\Delta g_2\leq 0$ ($\Delta$ being the natural, possibly nonlinear, Laplacian on the manifold) and $g_1+g_2$ has a minimum, then $g_1+g_2$ is constant, see Lemma 5.4. in \cite{GS}  and the references therein. A formulation like this is exactly what is necessary to get that the Busemann function is harmonic. However, as far as we know, such natural generalization of the maximum principle  is currently unavailable on the non-smooth setting.

\bibliographystyle{siam}
\bibliography{biblio}

\end{document}